\DeclareMathOperator{\Q}{\mathbb{Q}}
\DeclareMathOperator{\Disc}{\mathrm{Disc}}
\DeclareMathOperator{\Gal}{\mathrm{Gal}}
\DeclareMathOperator{\Res}{\mathrm{Resultant}}
\DeclareMathOperator{\sgn}{\mathrm{sgn}}
\DeclareMathOperator{\N}{\mathcal{N}}
\numberwithin{equation}{section}
\theoremstyle{plain} 
\newtheorem{theorem}{Theorem}[section]
\newtheorem{corollary}[theorem]{Corollary}
\newtheorem{lemma}[theorem]{Lemma}
\newtheorem{proposition}[theorem]{Proposition}
\begin{document}

\title{Galois groups of certain even octic polynomials}

\author{Malcolm Hoong Wai Chen} \address{Institute of Mathematical Sciences, Faculty of Science, University of Malaya, 50603 Kuala Lumpur, Malaysia.} \email{malcolmchen99@siswa.um.edu.my}
\author{Angelina Yan Mui Chin} \address{Institute of Mathematical Sciences, Faculty of Science, University of Malaya, 50603 Kuala Lumpur, Malaysia.} \email{acym@um.edu.my}
\author{Ta Sheng Tan} \address{Institute of Mathematical Sciences, Faculty of Science, University of Malaya, 50603 Kuala Lumpur, Malaysia.} \email{tstan@um.edu.my}

\subjclass{12F10, 11R09, 12D05, 12-08.}
\keywords{Galois groups; octic polynomials; power compositional polynomials; linear resolvent; arithmetic conditions; factorization patterns.}

 \begin{abstract}
Let $f(x)=x^8+ax^4+b \in \Q[x]$ be an irreducible polynomial where $b$ is a square. We give a method that completely describes the factorization patterns of a linear resolvent of $f(x)$ using simple arithmetic conditions on $a$ and $b$. As a result, we determine the exact six possible Galois groups of $f(x)$ and completely classify all of them. As an application, we characterize the Galois groups of irreducible polynomials $x^8+ax^4+1 \in \Q[x]$. We also use similar methods to obtain analogous results for the Galois groups of irreducible polynomials $x^8+ax^6+bx^4+ax^2+1 \in \Q[x]$.
 \end{abstract}

\maketitle

\section{Introduction}

It is a classical result of abstract algebra that every polynomial over a field has a Galois group, which describes the permutation symmetries among the roots of the polynomial equation. While the Galois group of a given polynomial can be determined by computer algebra systems through the use of suitable resolvents \cite{stauduhar}, these calculations are computationally expensive. An interesting research problem is therefore to provide simple conditions on the roots (and hence, coefficients) of a polynomial for determination of its Galois group. It is well-known that determining the Galois group of a quartic polynomial generally requires factorization over a quadratic field. However, specializing to even quartic polynomials simplifies the algorithm into testing whether or not two quantities are squares, as described by the following result of Kappe and Warren \cite{kappewarren}.

\begin{proposition}[{\cite[Theorem 3]{kappewarren}}] \label{quarticgal}
Let $K$ be a field with characteristic not two and $f(x)=x^4+ax^2+b \in K[x]$ be irreducible. Then the Galois group of $f(x)$ is the
\begin{enumerate}
\item elementary abelian group of order four if $b$ is a square in $K$,
\item cyclic group of order four if $b$ is not a square in $K$ but $b(a^2-4b)$ is a square in $K$,
\item dihedral group of order eight if both $b$ and $b(a^2-4b)$ are not squares in $K$.
\end{enumerate}
\end{proposition}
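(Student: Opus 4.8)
The plan is to exploit the biquadratic structure of $f$. Write the four roots as $\pm\alpha,\pm\beta$, where $\alpha^2$ and $\beta^2$ are the two roots of the resolvent quadratic $g(y)=y^2+ay+b$; then the splitting field is $L=K(\alpha,\beta)$ and $G=\Gal(L/K)$ permutes the roots while preserving the partition $\{\{\alpha,-\alpha\},\{\beta,-\beta\}\}$ coming from the map $x\mapsto x^2$. Consequently $G$ embeds into the order-eight group of symmetries preserving this pairing, i.e.\ $G$ is a transitive subgroup of a dihedral group $D_4\le S_4$; the only such subgroups are the Klein four-group $V_4$, the cyclic group $C_4$, and $D_4$ itself. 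The task is therefore to decide which of these three occurs from arithmetic conditions on $a$ and $b$.

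First I would compute the discriminant, $\Disc(f)=16\,b\,(a^2-4b)^2$, so that $\Disc(f)$ is a square in $K$ exactly when $b$ is a square. Since $G\le A_4$ if and only if $\Disc(f)$ is a square, and since $D_4\cap A_4=V_4$, case (1) follows immediately: when $b$ is a square, $G$ is the transitive subgroup $V_4$, the elementary abelian group of order four. The same criterion shows that in cases (2) and (3), where $b$ is not a square, $G$ contains an odd permutation, hence $G$ is either $C_4$ or $D_4$.

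To separate $C_4$ from $D_4$ I would analyse the field $K(\alpha)$, which has degree $4$ over $K$ by irreducibility. The key relations are $\alpha\beta=\pm\sqrt{b}$ (from $\alpha^2\beta^2=b$) and $\sqrt{a^2-4b}=\pm(\alpha^2-\beta^2)\in K(\alpha)$ (from $\alpha^2+\beta^2=-a$). Since $\beta=\pm\sqrt{b}/\alpha$, we have $\beta\in K(\alpha)$ — equivalently $L=K(\alpha)$, $|G|=4$, and thus $G=C_4$ — if and only if $\sqrt{b}\in K(\alpha)$. The heart of the matter is the equivalence
\[
\sqrt{b}\in K(\alpha)\iff b(a^2-4b)\ \text{is a square in }K,
\]
valid under the standing assumption that $b$ is not a square. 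The forward-easy direction is immediate: if $b(a^2-4b)=c^2$, then $\sqrt{b}=c/\sqrt{a^2-4b}\in K(\sqrt{a^2-4b})\subseteq K(\alpha)$, which yields $G=C_4$ and establishes case (2).

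For the converse, which I expect to be the main obstacle, I would argue by contradiction: if $\sqrt{b}\in K(\alpha)$ while $b(a^2-4b)$ is not a square, then $K(\sqrt{b})$ and $K(\sqrt{a^2-4b})$ are \emph{distinct} quadratic subfields of $K(\alpha)$, using the standard fact that two non-squares generate the same quadratic extension precisely when their product is a square. But then $K(\alpha)=K(\sqrt{b},\sqrt{a^2-4b})$ would be a biquadratic, hence Galois, extension with group $V_4\le A_4$, forcing $\Disc(f)$ and therefore $b$ to be a square — contradicting the hypothesis. Hence $\sqrt{b}\notin K(\alpha)$, so $[L:K]=8$ and $G=D_4$, the dihedral group of order eight, which is case (3). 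The delicate points requiring care are the transitive-subgroup classification inside $D_4$ and the quadratic-subfield lemma driving the converse.
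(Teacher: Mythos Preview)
The paper does not actually prove this proposition: it is quoted as \cite[Theorem~3]{kappewarren} and used as a black box throughout, so there is no ``paper's own proof'' to compare against.

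Your argument is correct and self-contained. The discriminant computation $\Disc(f)=16\,b\,(a^2-4b)^2$ is right, the identification of the possible Galois groups as the transitive subgroups of the pairing-preserving $D_4\le S_4$ is accurate (these are exactly the normal $V_4$, $C_4$, and $D_4$), and your separation of $C_4$ from $D_4$ via the criterion $\sqrt{b}\in K(\alpha)$ works. The only places to tighten are ones you already flagged: note explicitly that irreducibility forces $a^2-4b$ to be a non-square (else $\alpha^2\in K$ and $x^2-\alpha^2$ divides $f$), which both makes the division $c/\sqrt{a^2-4b}$ legitimate and guarantees $K(\sqrt{a^2-4b})$ really is a quadratic subfield of $K(\alpha)$; and in the converse step, once $K(\alpha)=K(\sqrt{b},\sqrt{a^2-4b})$ is biquadratic, the action of $\Gal(K(\alpha)/K)\cong V_4$ on the four roots is transitive and hence the regular representation, which is the normal $V_4\le A_4$, giving the desired contradiction with $b\notin K^2$. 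With those remarks made explicit, the proof is complete.
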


A \emph{power compositional polynomial} is a polynomial of the form $f(x^d)$ for some monic $f(x) \in \Q[x]$ and positive integer $d \ge 2$. It is not coincidental that algorithms for determining Galois groups of power compositional polynomials are much simpler than general polynomials. Observe that $f(x)$ defines a subfield of the splitting field of $f(x^d)$, which in turn provides a certain subgroup structure for the Galois group of $f(x^d)$. This observation is used in \cite{awtreysextic} and \cite{awtreyoctic} to classify the Galois groups of $f(x^2)$ where $f(x)=x^3+ax^2+bx+c$ and $f(x)=x^4+ax^2+b$, respectively. Various other techniques are also used in \cite{polycomp1,polycomp2,polycomp3,polycomp4,polycomp5} to construct infinite families of power compositional polynomials having certain prescribed Galois groups.

\par Let $G_8$ and $G_4$ denote the Galois groups of irreducible polynomials $x^8+ax^4+b \in \Q[x]$ and $x^4+ax^2+b \in \Q[x]$, respectively. Altmann et al. \cite{awtreyoctic} described a systematic framework to determine the possible candidates of $G_8$ for a given $G_4$. They also proved that $G_8$ can be uniquely determined if either $G_4$ is the cyclic group of order four, or if $G_4$ is the dihedral group of order eight and $4b-a^2$ is a square. To the best of our knowledge, this is the only study on the Galois groups of power compositional octic polynomials.

\par The main aim of this paper is to classify and distinguish the Galois groups of certain irreducible power compositional octic polynomials. Our first focus is on \emph{doubly even octic polynomials} $x^8+ax^4+b \in \Q[x]$ where $G_4$ is the elementary abelian group of order four. In view of Proposition \ref{quarticgal},  this is equivalent to the fact that $b$ is a square, and is the only remaining case of $G_4$ that has yet to be studied in the literature. In Section \ref{prelim} we will establish several preliminary results to completely describe the factorization patterns of a linear resolvent using simple arithmetic conditions on $a$ and $b$, and describe the filtering framework introduced by Altmann et al. in \cite{awtreyoctic}. These preliminary results will be applied to prove our main results in Section \ref{resultde} where we determine the exact six possible Galois groups of doubly even octic polynomials (Theorem \ref{exactpossiblegal}) and then completely classify all of them (Theorem \ref{deoctics}). As an application, we completely characterize the Galois groups of $x^8+ax^4+1 \in \Q[x]$ (Corollary \ref{pdeoctics}). This leads us to our second focus on \emph{palindromic even octic polynomials} $x^8+ax^6+bx^4+ax^2+1 \in \Q[x]$ (where $a \neq 0$) in Section \ref{resultpe}. We extend the techniques in Sections 2 and 3 to determine the exact possible Galois groups of palindromic even octic polynomials (Proposition \ref{peocticspossible}) and provide a partial classification of these Galois groups (Theorems \ref{peocticse4} and \ref{peocticsc4}). To the best of our knowledge, this is the first study on the Galois groups of $f(x^2)$ where $f(x)$ is a quartic polynomial with all non-zero terms.

\par Throughout the paper we will use the following notations: $C_n$ is the cyclic group of order $n$, $E_n$ is the elementary abelian group of order $n$, $D_n$ is the dihedral group of order $2n$, $A_n$ is the alternating group on $n$ letters, $S_n$ is the symmetric group on $n$ letters, $\mathrm{nTj}$ is the $j$-th conjugacy class among transitive subgroups of $S_n$ according to Butler and McKay \cite{butlermckay}, $\Gal(L/K)$ is the Galois group of the field extension $L/K$, $\Gal_K(f)$ is the Galois group of the polynomial $f(x)$ over the field $K$, $\Disc_K(f)$ is the discriminant of the polynomial $f(x)$ over the field $K$, and $K^2$ is the set $\{k^2 : k \in K\}$ of square elements in $K$. We omit the subscript $K$ in $\Gal_K(f)$ and $\Disc_K(f)$ if the base field $K$ is clear from the context (usually taken to be $\Q$).

\section{Preliminary Results} \label{prelim}

\subsection{Irreducibility of power compositional octic polynomials}

It is a challenging task to determine the irreducibility for a general family of polynomials with symbolic coefficients. However, a complete yet simple characterization for the irreducibility of power compositional sextic polynomials had been determined by Harrington and Jones \cite{harringtonjones}. They provided sufficient and necessary conditions for a sextic polynomial to be reducible by describing the possible factorization patterns and relationships between the coefficients of the factors. This amounted to solving a system of equations, where its solutions describe the irreducible factors (and hence, irreducibility) of the sextic polynomial. A key component of this technique is the following theorem of Capelli, see \cite[Section 2.1]{schinzel}.

\begin{proposition} \label{capelli}
Let $f(x),g(x) \in \Q[x]$ where $f(x)$ is irreducible, and let $\alpha$ be a root of $f(x)$. Then $f(g(x))$ is reducible over $\Q$ if and only if $g(x)-\alpha$ is reducible over $\Q(\alpha)$. Furthermore, if
\begin{eqnarray*}
g(x)-\alpha = c_1 u_1(x)^{e_1} \cdots u_k(x)^{e_k} \in \Q(\alpha)[x]
\end{eqnarray*}
where $u_1(x),\dots,u_k(x)$ are distinct monic polynomials irreducible over $\Q(\alpha)$, then
\begin{eqnarray*}
f(g(x)) = c_2 \N(u_1(x))^{e_1} \cdots \N(u_k(x))^{e_k} \in \Q[x]
\end{eqnarray*}
where the norms $\N(u_1(x)),\dots,\N(u_k(x))$ are distinct monic polynomials irreducible over $\Q$.
\end{proposition}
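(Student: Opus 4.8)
The plan is to realize $\N$ as the norm obtained by multiplying over the embeddings of $\Q(\alpha)$ and then to read off every assertion from the factorization of $f(g(x))$ that these embeddings induce. Write $n=\deg f$ and let $\sigma_1,\dots,\sigma_n$ be the distinct embeddings $\Q(\alpha)\hookrightarrow\overline{\Q}$, indexed so that the $\alpha_i:=\sigma_i(\alpha)$ run over the roots of $f$, with $\sigma_1$ the inclusion and $\alpha_1=\alpha$. For $u\in\Q(\alpha)[x]$ set $\N(u)(x)=\prod_{i=1}^n u^{\sigma_i}(x)$, where $u^{\sigma_i}$ applies $\sigma_i$ to the coefficients. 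First I would write $f(x)=c\prod_{i=1}^n(x-\alpha_i)$, where $c$ is the leading coefficient of $f$. Applying each $\sigma_i$ to the given factorization $g(x)-\alpha=c_1\prod_j u_j(x)^{e_j}$ and using $g\in\Q[x]$ (so that $g^{\sigma_i}=g$) gives $g(x)-\alpha_i=\sigma_i(c_1)\prod_j u_j^{\sigma_i}(x)^{e_j}$; taking the product over $i$ and substituting into $f(g(x))=c\prod_i\bigl(g(x)-\alpha_i\bigr)$ yields $f(g(x))=c_2\prod_j\N(u_j)(x)^{e_j}$ with $c_2=c\,\N(c_1)$.

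Next I would verify that each $\N(u_j)$ is monic and lies in $\Q[x]$. Monicity is immediate since each $u_j$ is monic. For rationality, let $\widetilde L$ be the Galois closure of $\Q(\alpha)/\Q$: any element of $\Gal(\widetilde L/\Q)$ permutes the embeddings $\sigma_i$ and hence permutes the factors $u_j^{\sigma_i}$, so it fixes the product $\N(u_j)$; thus the coefficients of $\N(u_j)$ lie in $\Q$.

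The heart of the argument is the irreducibility of each $\N(u_j)$ over $\Q$, and this is the step I expect to be the main obstacle. Here I would fix a root $\beta\in\overline{\Q}$ of $u_j$. Since $u_j\mid g(x)-\alpha$ we have $g(\beta)=\alpha$, so $\alpha=g(\beta)\in\Q(\beta)$ and hence $\Q(\alpha)\subseteq\Q(\beta)$. As $u_j$ is irreducible over $\Q(\alpha)$ with $\beta$ as a root, it is the minimal polynomial of $\beta$ over $\Q(\alpha)$, so $[\Q(\beta):\Q(\alpha)]=\deg u_j$ and therefore
\begin{equation*}
[\Q(\beta):\Q]=[\Q(\beta):\Q(\alpha)]\,[\Q(\alpha):\Q]=n\deg u_j=\deg\N(u_j).
\end{equation*}
Because $\beta$ is a root of the factor $u_j^{\sigma_1}=u_j$ of $\N(u_j)$, the minimal polynomial of $\beta$ over $\Q$ divides $\N(u_j)$; both are monic of degree $n\deg u_j$, so they coincide and $\N(u_j)$ is irreducible over $\Q$.

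It remains to establish distinctness and the reducibility equivalence. For distinctness I would use that, since $g\in\Q[x]$, every root of $u_j^{\sigma_i}$ is a root of $g(x)-\alpha_i$, and these root sets are pairwise disjoint for distinct $\alpha_i$ (a common root of $g(x)-\alpha$ and $g(x)-\alpha_i$ would force $\alpha=\alpha_i$). Consequently the roots of $\N(u_j)$ that satisfy $g(x)=\alpha$ are exactly the roots of $u_j$; since all polynomials in sight are separable in characteristic zero, an equality $\N(u_i)=\N(u_j)$ would force $u_i=u_j$, giving distinctness. Finally, the factorization $f(g(x))=c_2\prod_j\N(u_j)^{e_j}$ into distinct monic irreducibles shows that $f(g(x))$ is irreducible over $\Q$ precisely when $k=1$ and $e_1=1$, that is, precisely when $g(x)-\alpha=c_1u_1(x)$ is irreducible over $\Q(\alpha)$; the contrapositive is the asserted equivalence.
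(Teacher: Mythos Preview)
Your argument is correct: the norm-via-embeddings construction, the tower $[\Q(\beta):\Q]=[\Q(\beta):\Q(\alpha)][\Q(\alpha):\Q]=n\deg u_j$ to prove irreducibility of $\N(u_j)$, and the separation of root sets by the value of $g$ to prove distinctness all go through as written. The paper, however, does not supply its own proof of this proposition; it is stated as Capelli's theorem with a reference to Schinzel's book, so there is nothing in the paper to compare your route against beyond noting that you have given a self-contained proof where the authors chose to cite.
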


We remark that this technique can be generalized to any power compositional polynomials $f(x^d)$ by choosing $g(x)=x^d$, and is particularly effective for small values of $d=2,3$ where the irreducibility of $g(x)-\alpha$ can be determined easily. We now characterize the irreducibility of power compositional octic polynomials, and then specialize it to doubly even octic polynomials.

\begin{proposition} \label{polycompeq}
Let $f(x)=x^4+ax^3+bx^2+cx+d \in \Q[x]$ be irreducible. Then $f(x^2)$ is reducible if and only if there exist $k,\ell,m,n \in \Q$ satisfying
\begin{equation} \label{p1}
a=2\ell -k^2, \ \ b=2n-2km+\ell^2, \ \ c=2\ell n-m^2, \ \ d=n^2.
\end{equation}
\end{proposition}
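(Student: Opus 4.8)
The plan is to apply Capelli's criterion (Proposition \ref{capelli}) with $g(x)=x^2$. Let $\alpha$ be a root of the irreducible quartic $f(x)$. By Proposition \ref{capelli}, $f(x^2)$ is reducible over $\Q$ if and only if $x^2-\alpha$ is reducible over $\Q(\alpha)$, which happens precisely when $\alpha$ is a square in $\Q(\alpha)$, say $\alpha=\beta^2$ for some $\beta \in \Q(\alpha)$. Since $[\Q(\alpha):\Q]=4$ and $\{1,\alpha,\alpha^2,\alpha^3\}$ is a $\Q$-basis, I would write $\beta$ in this basis. The natural normalization is to set $\beta = \ell - k\alpha \cdot(\text{something})$; more systematically, I would write a general element of $\Q(\alpha)$ and compute its square, then match against $\alpha$.

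Concretely, the cleaner route is to build the factorization directly. If $x^2-\alpha$ factors over $\Q(\alpha)$, then $x^2-\alpha=(x-\beta)(x+\beta)$ with $\beta\in\Q(\alpha)$, and the two linear factors are the $u_i(x)$ of Proposition \ref{capelli}, each of degree one. Taking norms from $\Q(\alpha)$ down to $\Q$ then recovers $f(x^2)$ as a product of two quartics, $f(x^2)=\N(x-\beta)\cdot\N(x+\beta)$. I would instead parametrize the candidate quartic factor directly: suppose $f(x^2)=\big(x^4+kx^3+\ell x^2+mx+n\big)\big(x^4-kx^3+\ell' x^2-m'x+n'\big)$, where the sign pattern of the odd-degree coefficients is forced because $f(x^2)$ is even (replacing $x$ by $-x$ must swap the two factors, since $f(x^2)$ has no repeated roots as $f$ is separable). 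This symmetry forces the second factor to be the first with $x\mapsto -x$, so $f(x^2)=\big(x^4+kx^3+\ell x^2+mx+n\big)\big(x^4-kx^3+\ell x^2-mx+n\big)$, leaving exactly four free rational parameters $k,\ell,m,n$.

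From here the proof is a direct expansion: I would multiply out this product and collect coefficients by degree. The product has the form $(P+Q)(P-Q)=P^2-Q^2$ where $P=x^4+\ell x^2+n$ (the even part) and $Q=kx^3+mx$ (the odd part). Then $f(x^2)=P^2-Q^2$, and since $f(x^2)=x^8+ax^6+bx^4+cx^2+d$, I would expand $P^2-Q^2$ and read off the coefficients of $x^6,x^4,x^2,x^0$ (the odd-degree terms vanish automatically, consistent with $f(x^2)$ being even). Matching $x^6$ gives $a=2\ell-k^2$; matching $x^4$ gives $b=2n+\ell^2-2km$; matching $x^2$ gives $c=2\ell n-m^2$; and matching the constant term gives $d=n^2$. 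These are exactly the equations \eqref{p1}, establishing the forward direction. Conversely, given $k,\ell,m,n\in\Q$ satisfying \eqref{p1}, the product $P^2-Q^2$ reconstructs $f(x^2)$, exhibiting a nontrivial factorization and proving reducibility.

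The main subtlety, rather than any hard computation, is justifying the rigid sign structure of the two quartic factors, i.e.\ that any factorization of the even polynomial $f(x^2)$ into two quartics must take the symmetric form $P\pm Q$. The cleanest justification is the Capelli viewpoint: the only way $x^2-\alpha$ factors over $\Q(\alpha)$ is as $(x-\beta)(x+\beta)$, and taking norms sends $x\mapsto -x$ to the conjugate factor, which forces the $x\mapsto -x$ symmetry between the two rational quartic factors; equivalently, irreducibility of $f$ guarantees $f(x^2)$ has no quadratic factor over $\Q$, so the only possible nontrivial factorization is into two quartics, each invariant up to swapping under $x\mapsto -x$. Once this structural point is settled, the remainder is a routine coefficient comparison yielding \eqref{p1}.
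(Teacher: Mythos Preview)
Your proof is correct and follows essentially the same route as the paper: apply Capelli's criterion to reduce reducibility of $f(x^2)$ to the existence of $\beta\in\Q(\alpha)$ with $\beta^2=\alpha$, obtain the symmetric pair of quartic factors $x^4\pm kx^3+\ell x^2\pm mx+n$, and compare coefficients. The paper reaches the symmetric form by defining $k,\ell,m,n$ as the elementary symmetric functions of $\beta$ and its conjugates and writing the two norms $\N(x\pm\beta)$ explicitly, whereas you argue the $x\mapsto -x$ symmetry directly (with Capelli as the clean justification) and organize the expansion via $P^2-Q^2$; these are cosmetic differences only.
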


\begin{proof}
Choose some root $\alpha$ of $f(x)$. By Proposition \ref{capelli}, $f(x^2)$ is reducible if and only if $x^2-\alpha$ is reducible over $\Q(\alpha)$, that is, there exists some $\beta \in \Q(\alpha)$ such that $\alpha=\beta^2$. For such cases, $x^2-\alpha=(x+\beta)(x-\beta) \in \Q(\alpha)[x]$. Note that $x+\beta$ and $x-\beta$ are distinct monic polynomials irreducible over $\Q(\alpha)$. Let $\beta_1,\beta_2,\beta_3$ be conjugates of $\beta$ in the field extension $\Q(\alpha)/\Q$, and let $k,\ell,m,n \in \Q$ be the first, second, third and fourth elementary symmetric polynomials in $\beta,\beta_1,\beta_2,\beta_3$, respectively. Then
\begin{gather*}
\N(x+\beta) = (x+\beta)(x+\beta_1)(x+\beta_2)(x+\beta_3) = x^4+kx^3+\ell x^2+mx+n,
\\
\N(x-\beta) = (x-\beta)(x-\beta_1)(x-\beta_2)(x-\beta_3) = x^4-kx^3+\ell x^2-mx+n.
\end{gather*}
It follows that
\begin{eqnarray*}
&& x^8+ax^6+bx^4+cx^2+d \\
&=&(x^4+kx^3+\ell x^2+mx+n)(x^4-kx^3+\ell x^2-mx+n).
\end{eqnarray*}
By expanding the factors and comparing coefficients, we have (\ref{p1}). This proves the necessity. The sufficiency follows from construction.
\end{proof}
 
\begin{proposition} \label{polycompreducible}
Let $f(x)=x^4+ax^2+b \in \Q[x]$ be irreducible. Then $f(x^2)$ is reducible if and only if $b=r^4$ for some $r \in \Q$ and at least one of the following is in $\Q$:
\begin{equation} \label{p2}
\begin{split}
\sqrt{+4r+2\sqrt{2r^2+a}}, \quad \sqrt{-4r+2\sqrt{2r^2+a}}, \\
\sqrt{+4r-2\sqrt{2r^2+a}}, \quad \sqrt{-4r-2\sqrt{2r^2+a}}.
\end{split}
\end{equation} 
\end{proposition}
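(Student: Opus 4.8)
The plan is to invoke Proposition \ref{polycompeq} and thereby reduce the question to the solvability of an explicit polynomial system, which I would then solve by elimination. Specializing the general quartic in Proposition \ref{polycompeq} to $f(x)=x^4+ax^2+b$ (so that the cubic and linear coefficients vanish), we obtain that $f(x^2)$ is reducible if and only if there exist $k,\ell,m,n \in \Q$ satisfying
\begin{equation*}
k^2 = 2\ell, \qquad m^2 = 2\ell n, \qquad n^2 = b, \qquad a = 2n - 2km + \ell^2.
\end{equation*}
Everything then comes down to deciding when this system admits a rational solution and recording that solution in closed form.

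Next I would eliminate variables. The relation $k^2=2\ell$ gives $\ell=k^2/2$, and substituting into $m^2=2\ell n$ yields $m^2=k^2 n$. The key observation is that this forces $n \ge 0$, and in fact $n=(m/k)^2$ is a rational square whenever $k \neq 0$; combined with $n^2=b$ this gives $b=r^4$ where $r^2=n$. Conversely, writing $b=r^4$ lets me set $n=r^2$ and $m=\pm kr$. Substituting $\ell=k^2/2$, $n=r^2$ and $m=\pm kr$ into the final equation $a=2n-2km+\ell^2$ turns it into a quadratic in the single unknown $u=k^2$, namely $u^2 \mp 8ru + (8r^2-4a)=0$ after clearing denominators. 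Solving gives $u=k^2=\pm 4r \pm 2\sqrt{2r^2+a}$, and a rational $k$ exists precisely when one of the four resulting values of $u$ is a rational square, i.e.\ when one of the four expressions in (\ref{p2}) lies in $\Q$. Tracking the two independent sign choices---the sign of $m$ and the branch of the quadratic formula---against the four sign patterns in (\ref{p2}) is the bookkeeping that makes the statement match exactly.

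The main obstacle I anticipate is the edge case $k=0$, which must be ruled out before one can conclude that $n$ is a rational square. If $k=0$ then $\ell=0$ and $m=0$, so the last equation gives $a=2n$ and $b=n^2=a^2/4$; but then $f(x)=x^4+ax^2+a^2/4=(x^2+a/2)^2$ would be reducible, contradicting the irreducibility of $f$. Hence $k \neq 0$ throughout, which legitimizes dividing by $k$ and the passage $b=n^2=r^4$. The remaining care is to verify both directions of the equivalence: the forward direction extracts $r$ and the rational square root from a given factorization, while the reverse direction constructs $k,\ell,m,n$ explicitly from $b=r^4$ and a rational value of one of the expressions in (\ref{p2}), and checks by direct substitution that they solve the system, so that Proposition \ref{polycompeq} applies.
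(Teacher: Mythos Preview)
Your proposal is correct and follows essentially the same route as the paper's proof: both invoke Proposition~\ref{polycompeq}, eliminate $\ell$ and $m$ via $\ell=k^2/2$ and $m=\pm rk$, rule out the degenerate case $k=0$ by showing it forces $f(x)=(x^2+a/2)^2$, and then solve the resulting biquadratic in $k$ to obtain the four expressions in~(\ref{p2}). The only cosmetic difference is that the paper phrases the final step as a quartic in $k$ directly while you substitute $u=k^2$ to reduce to a quadratic, which amounts to the same computation.
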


\begin{proof}
By Proposition \ref{polycompeq}, $f(x^2)$ is reducible if and only if the following system of equations in $k,\ell,m,n$ has a solution over $\Q$: 
\begin{align}
0=2\ell -k^2, \label{e1} \\ 
a=2n-2km+\ell^2, \label{e2} \\ 
0=2\ell n-m^2, \label{e3} \\ 
b=n^2. \label{e4}
\end{align}
If $n=0$, then $b=0$ and hence, $f(x)$ is reducible, a contradiction. As such, $n \neq 0$. Now multiplying (\ref{e1}) by $n$ and equating with (\ref{e3}), we have 
\begin{eqnarray}
nk^2=m^2. \label{e5}
\end{eqnarray} 
If either $k=0$ or $m=0$, then (\ref{e5}) implies that $k=m=0$. Substituting into (\ref{e1}) we have $\ell=0$. It follows from (\ref{e2}) that $a=2n$ and hence, $f(x)=x^4+2nx^2+n^2=(x^2+n)^2$ is reducible, a contradiction. Therefore, $k$ and $m$ are non-zero and $n=(m/k)^2 \in \Q^2$, that is, $n=r^2$ for some $r \in \Q$. It follows from (\ref{e4}) that $b=(r^2)^2=r^4$.
\par By (\ref{e1}) and (\ref{e5}), we have $\ell=k^2/2$ and $m=\pm\sqrt{n}k=\pm rk$, respectively. This implies that $\ell,m \in \Q$ if and only if $k \in \Q$. Substituting into (\ref{e2}), we have
\begin{eqnarray*}
a=2(r^2)-2k(\pm rk)+\left(\frac{k^2}{2}\right)^2.
\end{eqnarray*}
This can be rearranged into
\begin{eqnarray*}
\frac{1}{4}k^4 \mp 2rk^2 + 2r^2-a = 0
\end{eqnarray*}
which is a quartic equation in $k$ whose solutions are (\ref{p2}) and their negatives. Therefore, the system of equations (\ref{e1})--(\ref{e4}) has a solution over $\Q$ if and only if $b=r^4$ for some $r \in \Q$ and any of the expressions in (\ref{p2}) is in $\Q$. This proves the necessity. The sufficiency follows from construction.
\end{proof}

We will also need the following three propositions to determine the irreducibility of certain quartic and power compositional octic polynomials in Sections \ref{resultde} and \ref{resultpe}.

\begin{proposition} \label{polycompcor}
Let $f(x)=x^4+ax^3+bx^2+cx+d \in \Q[x]$ be an irreducible polynomial and suppose that $f(x^2)$ is reducible. Then $d=n^2$ for some $n \in \Q$ and there exists a rational solution to the equation
\begin{equation}
x^4-(2b+12n)x^2+(8c+8an)x+b^2-4ac-4bn+4n^2=0. \label{ll}
\end{equation}
\end{proposition}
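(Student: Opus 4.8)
The plan is to start from Proposition \ref{polycompeq}, which tells us that the reducibility of $f(x^2)$ is equivalent to the existence of $k,\ell,m,n \in \Q$ satisfying the system (\ref{p1}). The last equation $d=n^2$ immediately gives the required $n \in \Q$. The heart of the matter is to exhibit a rational root of the quartic (\ref{ll}), and my first move would be to guess that this quartic is a resolvent-type equation whose roots are built from the symmetric functions $k,\ell,m,n$ appearing in Proposition \ref{polycompeq}. Since $\ell$ is a natural candidate for a rational quantity controlled by the factorization, I would conjecture that $x=\ell$ (or possibly $x = 2\ell$, $x=-2\ell$, or some simple linear expression in $\ell$) is a rational root, and verify this by substitution.

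Concretely, I would solve the system (\ref{p1}) to express $a,b,c,d$ in terms of $k,\ell,m,n$ and then substitute these into the left-hand side of (\ref{ll}) with a trial value such as $x=-2\ell$. The goal is to show the resulting expression collapses to zero as a polynomial identity in $k,\ell,m,n$. Using (\ref{p1}) we have $b = 2n - 2km + \ell^2$, $c = 2\ell n - m^2$, $a = 2\ell - k^2$, and $d=n^2$, so every coefficient of (\ref{ll}) becomes an explicit polynomial in $k,\ell,m,n$. Substituting the trial root and expanding should, after cancellation, yield the identity $0=0$, which certifies that this linear expression in $\ell$ is genuinely a rational root of (\ref{ll}). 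Because $k,\ell,m,n \in \Q$, the proposed root lies in $\Q$, giving the desired rational solution.

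The main obstacle will be identifying \emph{which} expression in $k,\ell,m,n$ is the correct root and then managing the algebra in the verification. The quartic (\ref{ll}) has a quadratic coefficient $-(2b+12n)$ that grows like $\ell^2$ and a constant term $b^2-4ac-4bn+4n^2$ of degree four in the parameters, so a wrong guess will produce a messy nonzero polynomial that is hard to interpret. I expect the cleanest path is to recognize (\ref{ll}) as the quartic resolvent (or a related transform) of $f(x)$ itself: the combination $b^2 - 4ac - 4bn + 4n^2$ and the shift by $n$ strongly suggest a cubic/quartic resolvent evaluated under the constraint $d=n^2$. If the naive trial root $-2\ell$ does not work directly, the backup plan is to compute the resolvent of the quartic $f$ symbolically and match it termwise against (\ref{ll}), thereby reading off the intended root from the known factorization of the resolvent when $f(x^2)$ is reducible. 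Either way, once the correct rational expression is pinned down, the proof reduces to a single polynomial-identity check, and the rationality of the root follows immediately from $k,\ell,m,n \in \Q$.
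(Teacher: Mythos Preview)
Your approach is essentially the paper's: it simply notes that the result follows from Proposition~\ref{polycompeq} by eliminating $k$ and $m$ from the system~(\ref{p1}), which produces exactly the quartic~(\ref{ll}) in the variable $\ell$. Your first instinct $x=\ell$ is the correct root (squaring $2km = 2n+\ell^2-b$ and substituting $k^2=2\ell-a$, $m^2=2\ell n-c$ gives~(\ref{ll}) on the nose), so the detour through $x=-2\ell$ is unnecessary.
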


\begin{proof}
This is a direct consequence of Proposition \ref{polycompeq} where we eliminate $k$ and $m$ to write $\ell$ in terms of $a,b,c,n$.
\end{proof}

\begin{proposition}[{\cite[Theorem 2]{kappewarren}}] \label{evenquarticreducible}
Let $K$ be a field with characteristic not two. Then $f(x)=x^4+ax^2+b \in K[x]$ is irreducible if and only if $a^2-4b,-a+2\sqrt{b},-a-2\sqrt{b} \notin K^2$.
\end{proposition}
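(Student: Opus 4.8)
The plan is to prove the biconditional by directly enumerating all factorizations of $f(x)=x^4+ax^2+b$ over $K$ and matching each to one of the three square conditions. Since $\deg f = 4$, reducibility means $f$ admits a monic factor of degree $1$ or $2$. First I would eliminate the linear case by observing that a root in $K$ forces a quadratic factor: if $\alpha \in K$ satisfies $f(\alpha)=0$, then $f(-\alpha)=0$ as well (because $f$ is even and $\mathrm{char}\,K \neq 2$), so $(x-\alpha)(x+\alpha)=x^2-\alpha^2$ divides $f$. Hence $f$ is reducible over $K$ if and only if it splits as a product of two monic quadratics, and I only need to analyze that situation.

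Next I would parametrize such a factorization. Comparing the coefficient of $x^3$ shows that any factorization must have the shape $f(x)=(x^2+cx+d)(x^2-cx+e)$ with $c,d,e \in K$. Expanding and comparing the remaining coefficients gives the system $d+e-c^2=a$, $\ c(e-d)=0$, $\ de=b$. This splits into two cases. When $c=0$ the system reduces to $d+e=a$ and $de=b$, so $d,e$ are the roots of $t^2-at+b$, which lie in $K$ exactly when $a^2-4b \in K^2$. When $c \neq 0$ the middle equation forces $d=e$, whence $d^2=b$ and $2d-c^2=a$; thus $b \in K^2$ with $d=\pm\sqrt{b}$ and $c^2=-a\pm 2\sqrt{b}$, which is solvable for $c \in K$ precisely when $-a+2\sqrt{b} \in K^2$ or $-a-2\sqrt{b} \in K^2$. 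Combining the two cases yields that $f$ is reducible if and only if at least one of $a^2-4b$, $-a+2\sqrt{b}$, $-a-2\sqrt{b}$ lies in $K^2$; negating gives the stated irreducibility criterion. The sufficiency direction (building the explicit factorization from each condition) is then routine back-substitution.

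The step requiring the most care is the bookkeeping around the meaning of $\sqrt{b}$ and the degenerate boundary. I would note that the conditions $-a\pm 2\sqrt{b}\in K^2$ already presuppose $\sqrt{b}\in K$: since $\mathrm{char}\,K \neq 2$, the membership $-a\pm 2\sqrt{b}\in K$ forces $\sqrt{b}\in K$, so when $b \notin K^2$ only the condition $a^2-4b\in K^2$ is operative, exactly matching the $c=0$ case. I would also verify that the boundary value $-a\pm 2\sqrt{b}=0$ creates no inconsistency: there the resulting $c=0$ collapses the $c\neq 0$ case back into the $c=0$ case, and indeed $-a+2\sqrt{b}=0$ gives $a^2=4b$, so $a^2-4b=0\in K^2$ as well. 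Confirming this compatibility between the two cases is the only genuinely delicate point; everything else is coefficient comparison.
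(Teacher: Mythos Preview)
Your argument is correct. Note, however, that the paper does not supply its own proof of this proposition: it is quoted verbatim as \cite[Theorem~2]{kappewarren} and used as a black box, so there is no in-paper proof to compare against. That said, the paper does allude to the content of the Kappe--Warren argument when it speaks of ``mimicking the algebraic calculations in the proof of Proposition~\ref{evenquarticreducible}'' to obtain explicit quadratic-times-quadratic factorizations, and your parametrization $f(x)=(x^2+cx+d)(x^2-cx+e)$ together with the case split $c=0$ versus $c\neq 0$ is exactly that computation. So your write-up is essentially the original Kappe--Warren proof.

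One cosmetic remark: the evenness of $f$ alone gives $f(-\alpha)=f(\alpha)=0$; the hypothesis $\mathrm{char}\,K\neq 2$ is what guarantees $\alpha\neq -\alpha$ for $\alpha\neq 0$, so that $(x-\alpha)(x+\alpha)$ is a genuine quadratic factor rather than a repeated linear one. You invoke the characteristic hypothesis at the right place but attach it to the wrong clause.
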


\begin{proposition}[{\cite[Theorem 1]{brookfield}}] \label{quarticreducible}
The polynomial $x^4+cx^2+dx+e \in \Q[x]$ factors as the product of two quadratic polynomials if and only if at least one of the following holds:
\begin{enumerate}
\item The resolvent cubic $x^3+2cx^2+(c^2-4e)x-d^2$ has a non-zero root in $\Q^2$.
\item $d=0$ and $c^2-4e \in \Q^2$.
\end{enumerate}
\end{proposition}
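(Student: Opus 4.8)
The plan is to argue directly from the factorization ansatz, exploiting the absence of a cubic term in $x^4+cx^2+dx+e$. Since the quartic is monic, any factorization into two quadratics over $\Q$ may be taken with both factors monic, say $(x^2+px+q)(x^2+rx+s)$ with $p,q,r,s \in \Q$. Comparing the $x^3$ coefficient forces $r=-p$, so every such factorization has the shape $(x^2+px+q)(x^2-px+s)$. Expanding and matching the remaining coefficients with $c,d,e$ yields the three relations $q+s=c+p^2$, $p(s-q)=d$, and $qs=e$. The entire proof reduces to deciding when this system admits a rational solution, and the dichotomy $p=0$ versus $p\neq 0$ is exactly what produces the two alternatives in the statement.

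For the principal case $p\neq 0$, I would solve the first two relations linearly for $q$ and $s$, getting $s-q=d/p$ and hence $q=\frac{1}{2}(c+p^2-d/p)$ and $s=\frac{1}{2}(c+p^2+d/p)$, which are automatically rational once $p$ is. Substituting into $qs=e$ gives $(c+p^2)^2-(d/p)^2=4e$; clearing denominators by multiplying through by $p^2$ and setting $u=p^2$ transforms this into $u^3+2cu^2+(c^2-4e)u-d^2=0$, which is precisely the resolvent cubic in the statement. Thus a factorization with $p\neq 0$ exists if and only if this cubic has a root $u$ equal to a nonzero square in $\Q$: the root is $u=p^2$, and $p\neq 0$ forces $u$ to be a nonzero element of $\Q^2$. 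Conversely, any such root $u=p^2$ reconstructs rational $q,s$ and hence a genuine factorization. Since every step is reversible, both directions of the equivalence with condition (1) follow together.

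For the remaining case $p=0$, the relation $p(s-q)=d$ collapses to $d=0$, and the surviving equations $q+s=c$, $qs=e$ say exactly that $q$ and $s$ are the two roots of $t^2-ct+e$. These roots lie in $\Q$ if and only if the discriminant $c^2-4e$ is a square in $\Q$, which is condition (2). Combining the two cases yields the stated ``or'', completing the proof.

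The point demanding the most care—and the reason conditions (1) and (2) cannot be merged—is the behavior when $d=0$. There the resolvent cubic degenerates to $x\bigl(x^2+2cx+(c^2-4e)\bigr)$, so $0$ is always a root; condition (1) then detects only the factorizations of the form $(x^2+px+q)(x^2-px+q)$ arising from a nonzero square root of the quadratic factor, whereas the ``biquadratic'' factorization $(x^2+q)(x^2+s)$ is governed independently by $c^2-4e\in\Q^2$. Keeping these two factorization types separate, and verifying that the nonzero-square condition on the cubic root faithfully matches the requirement $p\in\Q\setminus\{0\}$ rather than merely $p^2\in\Q$, is the main bookkeeping obstacle; the algebraic elimination itself is routine.
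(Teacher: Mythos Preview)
The paper does not supply its own proof of this proposition; it is quoted from Brookfield's article and stated without argument. Your proof is correct and is essentially the standard derivation of this criterion: write the putative factorization as $(x^2+px+q)(x^2-px+s)$, eliminate $q$ and $s$ when $p\neq 0$ to obtain the resolvent cubic in $u=p^2$, and handle $p=0$ separately. You have also correctly identified the one genuinely delicate point, namely why the root must be a \emph{nonzero} square and why the $d=0$ case requires the separate condition~(2).
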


\subsection{Linear resolvents and resultants}

We recall some important results on linear resolvents as described in \cite{soicher}.

\begin{proposition} \label{soicher}
Let $f(x) \in \Q[x]$ be an irreducible polynomial of degree $n$ and $\alpha_1,\dots,\alpha_n$ be all the roots of $f(x)$. Let $F(x_1,\dots,x_n) \in \Q[x_1,\dots,x_n]$ and let
\begin{equation*}
H=\{\sigma \in S_n : F(x_{\sigma(1)},\dots,x_{\sigma(n)}) = F(x_1,\dots,x_n)\}.
\end{equation*}
Then the resolvent polynomial of $f(x)$ corresponding to $F$ is
\begin{equation*}
R(x):=\prod_{\sigma \in S_n//H} (x-F(\alpha_{\sigma(1)},\dots,\alpha_{\sigma(n)}) \in \Q[x]
\end{equation*}
where $S_n//H$ is a complete set of right coset representatives of $H$ in $S_n$. The irreducible factors of $R(x)$ that occur with multiplicity one correspond to the orbits of the action of $\Gal(f)$ on the cosets in $S_n/H$, and the Galois group for any of these irreducible factors is the image of the permutation representation of this action on its corresponding coset. Furthermore, if $F[x_1,\dots,x_n]=x_1+\cdots+x_r$ where $r \leq n$, then the orbits of the action of $\Gal(f)$ on the cosets in $S_n/H$ correspond to the orbits of the action of $\Gal(f)$ on all the $r$-sets of $n$ letters.
\end{proposition}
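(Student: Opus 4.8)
The plan is to work inside a splitting field $L$ of $f$ over $\Q$ and to exploit the action of $G=\Gal(L/\Q)$. Since $f$ is irreducible, $G$ acts transitively on the labelled roots $\alpha_1,\dots,\alpha_n$, which I view as an embedding $G \hookrightarrow S_n$ normalized so that $g(\alpha_i)=\alpha_{g(i)}$ for every $g \in G$. First I would make sense of $R(x)$: writing $F^\sigma$ for $F(x_{\sigma(1)},\dots,x_{\sigma(n)})$, a direct substitution shows $(F^\sigma)^\tau = F^{\tau\sigma}$, so $\sigma \mapsto F^\sigma$ is an action of $S_n$ whose stabilizer of $F$ is exactly $H$. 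Consequently $F^\sigma=F^{\sigma'}$, and hence $F(\alpha_{\sigma(1)},\dots,\alpha_{\sigma(n)})=F(\alpha_{\sigma'(1)},\dots,\alpha_{\sigma'(n)})$, whenever $\sigma$ and $\sigma'$ lie in the same coset of $H$; thus the $[S_n:H]$ linear factors of $R(x)$ are indexed by the cosets in $S_n/H$ independently of the chosen representatives, and I abbreviate $\beta_\sigma := F(\alpha_{\sigma(1)},\dots,\alpha_{\sigma(n)})$.

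To obtain $R(x)\in\Q[x]$ I would show the multiset $\{\beta_\sigma\}$ is $G$-stable. For $g\in G\subseteq S_n$ the normalization gives $g(\beta_\sigma)=F(\alpha_{(g\sigma)(1)},\dots,\alpha_{(g\sigma)(n)})=\beta_{g\sigma}$, so $G$ permutes the roots of $R$ exactly as it permutes the cosets in $S_n/H$ by multiplication. Every coefficient of $R$ is then a symmetric function of the $\beta_\sigma$ fixed by $G$, hence lies in $L^G=\Q$.

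Next I would decompose $S_n/H$ into the $G$-orbits $\mathcal{O}_1,\dots,\mathcal{O}_t$ and set $R_j(x)=\prod_{\sigma H \in \mathcal{O}_j}(x-\beta_\sigma)$, so that $R=R_1\cdots R_t$ and, by the invariance argument applied to each $G$-stable orbit, each $R_j\in\Q[x]$. Within a single orbit all $\beta_\sigma$ are $G$-conjugate and so share a common multiplicity $e$; if the $\beta_\sigma$ over $\mathcal{O}_j$ are pairwise distinct (the multiplicity-one case, $e=1$), then $G$ acts transitively on the distinct roots of $R_j$, the minimal polynomial over $\Q$ of any one of them already has all of them as its roots, and therefore $R_j$ is irreducible; if instead a value repeats then $e>1$ and $R_j$ is the $e$-th power of an irreducible polynomial, which occurs with multiplicity greater than one. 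This yields the asserted bijection between the multiplicity-one irreducible factors of $R$ and the orbits of $G$ on $S_n/H$ on which $\sigma\mapsto\beta_\sigma$ is injective. For such an $R_j$ its splitting field is the subfield of $L$ generated by $\{\beta_\sigma : \sigma H \in \mathcal{O}_j\}$, and injectivity identifies the kernel of the permutation representation $G\to\mathrm{Sym}(\mathcal{O}_j)$ with the subgroup of $G$ fixing each of these generators; hence $\Gal(R_j)$ is the image of that representation. The hard part will be precisely this multiplicity-one clause --- tracking when distinct cosets can yield equal values $\beta_\sigma$ and isolating exactly the factors for which the transitivity-implies-irreducibility argument applies --- together with keeping a single coset convention consistent throughout.

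Finally, for $F=x_1+\cdots+x_r$ I would observe that $F^\sigma=F$ holds iff $\{\sigma(1),\dots,\sigma(r)\}=\{1,\dots,r\}$, so $H$ is the stabilizer in $S_n$ of the set $\{1,\dots,r\}$, namely the Young subgroup $S_r\times S_{n-r}$. By orbit--stabilizer the map $\sigma H \mapsto \{\sigma(1),\dots,\sigma(r)\}$ is then a $G$-equivariant bijection from $S_n/H$ onto the $r$-subsets of $\{1,\dots,n\}$, under which $\beta_\sigma$ becomes the sum of the roots over the corresponding $r$-set. Transporting the orbit decomposition above through this bijection gives the stated correspondence between the orbits of $G$ on $S_n/H$ and the orbits of $G$ on the $r$-sets of $n$ letters.
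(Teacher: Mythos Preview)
The paper does not prove this proposition at all: it is quoted as background from Soicher's thesis \cite{soicher} and used as a black box, so there is no ``paper's own proof'' to compare against. Your argument is a correct and standard way to establish the result, and it would serve perfectly well as a self-contained proof here.

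One small refinement worth making explicit: after showing that each orbit $\mathcal{O}_j$ contributes a factor $R_j$ that is either irreducible or an $e$-th power of an irreducible, you equate ``multiplicity-one irreducible factors of $R$'' with ``orbits on which $\sigma H \mapsto \beta_\sigma$ is injective.'' Strictly speaking there is a second way a factor can acquire multiplicity greater than one: two distinct orbits $\mathcal{O}_j$ and $\mathcal{O}_k$, each with injective value map, could produce the \emph{same} irreducible polynomial (their value sets, being $G$-orbits in $L$, are either disjoint or equal). This does not affect how the proposition is actually used in the paper --- every application first checks that the relevant factor is squarefree in $R$, which rules out both failure modes simultaneously --- but if you want a clean bijection you should phrase it as: the multiplicity-one irreducible factors of $R$ are exactly the $R_j$ arising from orbits whose value set is not shared by any other orbit and on which the value map is injective. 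With that caveat, your left/right coset bookkeeping, the Galois-invariance argument for $R\in\Q[x]$, the identification of $\Gal(R_j)$ with the image of $G\to\mathrm{Sym}(\mathcal{O}_j)$, and the final orbit--stabilizer identification of $S_n/(S_r\times S_{n-r})$ with $r$-subsets are all correct.
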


In particular, we will use the linear resolvent corresponding to $F=x_1+x_2$ for our classification of Galois groups of $f(x)$. We remark that this linear resolvent can be computed using resultants as the polynomial $R(x)$ satisfying the relation 
\begin{equation}
R(x)^2 = \frac{\Res_y(f(y),f(x-y))}{2^8 \cdot f(x/2)}. \label{resultant}
\end{equation}

\subsection{Possible Galois groups} \label{filter}

We describe the filtering framework used in \cite{awtreyoctic} to obtain a preliminary list of possible Galois groups of irreducible polynomials $f(x)=g(x^2)$ based on the Galois group of the quartic polynomial $g(x)$. Let $K_f$ and $K_g$ be field extensions of $\Q$ obtained by adjoining a root of $f(x)$ and $g(x)$ respectively, and let $H_f$ and $H_g$ be subgroups of $\Gal(f)$ corresponding to $K_f$ and $K_g$ respectively under the Galois correspondence. Then $H_f$ is the point stabilizer of 1 under $\Gal(f)$ and $H_g$ is an index four subgroup of $\Gal(f)$ containing $H_f$. Furthermore, the splitting field of $g(x)$ is the normal closure of $K_g$. The list of Galois groups of the normal closures of non-isomorphic intermediate subfields of $K_f/\Q$ is said to be the \emph{subfield content of $f(x)$}, which is an invariant of $\Gal(f)$ \cite[Proposition 2.1]{awtreysubfield} and hence, can be determined completely by group-theoretic calculations. In particular, we have $\Gal(g)$ in the subfield content of $f(x)$. 
\par We also consider the discriminant of $f(x)$. Recall that $\Gal(f)$ is contained in $A_8$ if and only if $\Disc(f) \in \Q^2$. For power compositional polynomials, whether or not $\Disc(f) \in \Q^2$ can be easily checked by the following result.

\begin{proposition}[{\cite[Theorem 2.4]{awtreyoctic}}] \label{polycompdisc}
Let $f(x) \in \Q[x]$ be a monic polynomial with degree $m$, and let $c$ be the constant term of $f(x)$. Let $k$ be a positive integer and $n=mk$. Then
\begin{eqnarray*}
\Disc(f(x^k)) = (-1)^{n(n-m)/2} \cdot k^n \cdot c^{k-1} \cdot \Disc(f)^k.
\end{eqnarray*}
If $k$ is even, then $\Disc(f) \in \Q^2$ if and only if $(-1)^{n/2} c \in \Q^2$.
\end{proposition}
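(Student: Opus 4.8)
The plan is to reduce everything to the standard identity $\Disc(g) = (-1)^{n(n-1)/2}\Res(g,g')$ for a monic polynomial $g$ of degree $n$, applied to $g(x) = f(x^k)$. Since $f$ is monic of degree $m$, so is $g$ monic of degree $n = mk$, and the chain rule gives $g'(x) = k\,x^{k-1} f'(x^k)$. Writing $g'$ as the product of the three factors $k$, $x^{k-1}$, and $f'(x^k)$ and using the multiplicativity of the resultant in its second argument, I would split
\begin{equation*}
\Res(g,g') = \Res(g,k)\cdot\Res(g,x^{k-1})\cdot\Res(g,f'(x^k)),
\end{equation*}
so that the whole computation is reduced to three elementary resultants.

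Next I would evaluate each factor using $\Res(g,h) = \prod_{g(\rho)=0} h(\rho)$ for monic $g$. The constant factor gives $\Res(g,k) = k^{n}$. For the monomial factor, $\Res(g,x^{k-1}) = \big(\prod_{\rho} \rho\big)^{k-1}$, and since the product of the roots of $g$ equals $(-1)^{n}$ times its constant term $g(0)=f(0) = c$, this is $(-1)^{n(k-1)} c^{k-1}$; as $k(k-1)$ is even, $n(k-1) = mk(k-1)$ is even and the sign is $+1$. For the last factor I would use that the roots of $g$ are exactly the $k$-th roots of the roots $\alpha_1,\dots,\alpha_m$ of $f$, so that each $\rho^k$ runs over $\alpha_1,\dots,\alpha_m$, each attained $k$ times; hence $\Res(g,f'(x^k)) = \prod_{i=1}^{m} f'(\alpha_i)^{k} = \Res(f,f')^{k}$, which by the same discriminant identity for $f$ equals $\big((-1)^{m(m-1)/2}\Disc(f)\big)^{k}$. (These root-based evaluations are valid since the irreducible $f$ is separable; alternatively both sides are polynomial identities in the coefficients of $f$.) Assembling the three factors together with the outer sign $(-1)^{n(n-1)/2}$ yields the claimed formula up to the sign exponent.

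The one genuinely delicate point --- and the step I expect to be the main obstacle --- is the sign bookkeeping. After assembling, the total exponent of $-1$ is $n(n-1)/2 + km(m-1)/2$, and I must check this is congruent modulo $2$ to the target exponent $n(n-m)/2$. Substituting $n = mk$ and simplifying, the difference of the two exponents works out to $km(m-1)$, which is even because $m(m-1)$ is a product of two consecutive integers; hence the two signs agree and the displayed discriminant formula follows.

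Finally, for the square-class statement when $k$ is even, I would read the formula modulo $\Q^2$. Here $n = mk$ is even, so $k^{n} = (k^{n/2})^2$ and $\Disc(f)^{k} = (\Disc(f)^{k/2})^2$ are squares, while $c^{k-1} = c\cdot (c^{(k-2)/2})^2$ is $c$ times a square; thus $\Disc(f(x^k))$ lies in the same square class as $(-1)^{n(n-m)/2} c$. A short parity check (writing $k = 2t$ and using $m^2 \equiv m \pmod 2$) shows $n(n-m)/2 \equiv n/2 \pmod 2$ when $k$ is even, so the relevant sign is $(-1)^{n/2}$, giving $\Disc(f(x^k)) \in \Q^2$ if and only if $(-1)^{n/2} c \in \Q^2$.
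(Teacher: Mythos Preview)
The paper does not supply its own proof of this proposition; it is quoted verbatim as \cite[Theorem 2.4]{awtreyoctic} and used as a black box. So there is nothing to compare your argument against.

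Your argument is correct. The resultant decomposition $\Res(g,g') = \Res(g,k)\,\Res(g,x^{k-1})\,\Res(g,f'(x^k))$ via multiplicativity is exactly the right tool, and your evaluation of each factor is accurate. Your sign bookkeeping checks out: the difference $n(n-1)/2 + km(m-1)/2 - n(n-m)/2 = km(m-1)$ is indeed even, and for the square-class statement the reduction $n(n-m)/2 = m^2 t(2t-1) \equiv m t(2t-1) \equiv mt = n/2 \pmod 2$ (using $m^2\equiv m$ and $2t-1\equiv 1$) is fine. One small quibble: the proposition does not assume $f$ is irreducible, so your parenthetical ``since the irreducible $f$ is separable'' is misplaced; but your fallback (``both sides are polynomial identities in the coefficients of $f$'') is the correct justification and suffices on its own. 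For the second sentence of the proposition, note that the paper's ``$\Disc(f)$'' is evidently shorthand for $\Disc(f(x^k))$ (this is how it is applied in the proof of Proposition~\ref{possiblegal}); you interpreted this correctly.
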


The filtering framework narrows down the list of possible Galois groups by providing us information on certain group-theoretic properties of the Galois groups. Recall that the Galois group of an irreducible polynomial is a transitive group, so we examine all 50 conjugacy classes of transitive subgroups of $S_8$ using the table of Galois groups from the database LMFDB \cite{lmfdb}. The \verb|Subfields| column of the table gives the subfield content of $f(x)$, whereas the \verb|Parity| column of the table records whether or not the group is contained in $A_8$. We remark that this framework can be generalized to any power compositional polynomials $f(x)$ by choosing $g(x)$ so that $f(x)=g(x^d)$.

\section{Doubly Even Octic Polynomials} \label{resultde}

\subsection{Factorization patterns of the linear resolvent}

Let $f(x)=x^8+ax^4+b \in \Q[x]$ be an irreducible polynomial where $b \in \Q^2$. We now use (\ref{resultant}) to compute the linear resolvent $R(x)$ of $f(x)$ corresponding to $F=x_1+x_2$. Computing this expression in Mathematica \cite{wmath} shows that the degree 28 resolvent $R(x)$ is the product of $x^4$ with three octic polynomials $R_1(x^2)$, $R_2(x^2)$ and $R_3(x^2)$ where
\begin{equation} \label{r1}
\begin{split}
R_1(x) &= x^4+(2a+12\sqrt{b})x^2+(a-2\sqrt{b})^2, \\ 
R_2(x) &= x^4+(2a-12\sqrt{b})x^2+(a+2\sqrt{b})^2, \\ 
R_3(x) &= x^4-4ax^2+16b. 
\end{split}
\end{equation}

In what follows, for brevity (with abuse of notation) we use $R_i$ to refer to all three polynomials $R_1,R_2,R_3$ defined in (\ref{r1}) collectively. We also remark that any irreducible factor of $R(x)$ that occurs with multiplicity one has degree at least four. This can be verified by computing the list of orbit lengths of the action of $G$ on all the 2-sets of eight letters for each transitive subgroup $G$ of $S_8$. In particular, any square-free degree four factor of $R(x)$ is irreducible.

\par We now determine the irreducibility and factorization patterns of $R_i(x^2)$, and show that each of the irreducible factors in $R_i(x^2)$ of the linear resolvent $R(x)$ occurs with multiplicity one so that we can apply Proposition \ref{soicher}. We begin by studying the irreducibility of $R_i(x)$.

\begin{table}[h]
\caption{The values of $A^2-4B$, $-A+2\sqrt{B}$ and $-A-2\sqrt{B}$ for the given polynomials $R_i(x)=x^4+Ax^2+B$. \label{table1}}
{\begin{tabular}{|c|c|c|c|} \hline 
\textbf{Polynomial} & $A^2-4B$ & $-A+2\sqrt{B}$ & $-A-2\sqrt{B}$ \\ \hline
$R_1(x)$ & $64(2b+a\sqrt{b})$ & $-16\sqrt{b}$ & $4(-a-2\sqrt{b})$ \\ \hline
$R_2(x)$ & $64(2b-a\sqrt{b})$ & $16\sqrt{b}$ & $4(-a+2\sqrt{b})$ \\ \hline
$R_3(x)$ & $16(a^2-4b)$ & $4(a+2\sqrt{b})$ & $4(a-2\sqrt{b})$ \\ \hline
\end{tabular}}
\end{table}

\begin{proposition} \label{linres2}
Define $R_1,R_2,R_3$ as in {\upshape (\ref{r1})}. Then
\begin{enumerate}
\item $R_1(x)$ is irreducible if and only if $2b+a\sqrt{b} \notin \Q^2$, 
\item $R_2(x)$ is irreducible if and only if $\sqrt{b} \notin \Q^2$ and $2b-a\sqrt{b} \notin \Q^2$,
\item $R_3(x)$ is irreducible if and only if $a-2\sqrt{b} \notin \Q^2$ and $a+2\sqrt{b} \notin \Q^2$.
\end{enumerate}
\end{proposition}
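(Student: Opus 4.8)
The plan is to apply Proposition \ref{evenquarticreducible} to each of the three quartics $R_i(x) = x^4 + Ax^2 + B$ and then prune the resulting conditions using the irreducibility of $f$. By that proposition, $R_i(x)$ is irreducible over $\Q$ precisely when none of $A^2 - 4B$, $-A + 2\sqrt{B}$, $-A - 2\sqrt{B}$ lies in $\Q^2$, and these three values are exactly the ones recorded in Table \ref{table1}. Since $b \in \Q^2$ and $b \neq 0$ (the latter because $f$ is irreducible), we have $\sqrt{b} \in \Q$ and $\sqrt{b} > 0$; in particular each $B$ appearing in (\ref{r1}) is a rational square, so every $\sqrt{B}$ is rational and all three conditions are genuine conditions over $\Q$. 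I would also note at the outset that changing the sign of $\sqrt{B}$ merely interchanges the two quantities $-A + 2\sqrt{B}$ and $-A - 2\sqrt{B}$, so it does not alter the pair of conditions imposed, which justifies the particular entries chosen in Table \ref{table1}.

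The key simplification comes from observing that irreducibility of $f(x) = g(x^2)$ forces $g(x) = x^4 + ax^2 + b$ to be irreducible, since any factorization of $g$ would immediately yield one of $f$. Applying Proposition \ref{evenquarticreducible} to $g$ then supplies, for free, that
\begin{equation*}
a^2 - 4b \notin \Q^2, \quad -a + 2\sqrt{b} \notin \Q^2, \quad -a - 2\sqrt{b} \notin \Q^2.
\end{equation*}
These three inherited facts are exactly what is needed to discard the redundant conditions in each $R_i$, after absorbing the perfect-square scalars $4$, $16$, $64$, which never change membership in $\Q^2$.

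I would then argue case by case from Table \ref{table1}. For $R_1$, the value $-A + 2\sqrt{B} = -16\sqrt{b}$ is strictly negative (since $\sqrt{b} > 0$) and hence never a square, while $-A - 2\sqrt{B} = 4(-a - 2\sqrt{b})$ lies in $\Q^2$ if and only if $-a - 2\sqrt{b}$ does, which the irreducibility of $g$ forbids; thus $R_1$ is irreducible exactly when $64(2b + a\sqrt{b}) \notin \Q^2$, equivalently $2b + a\sqrt{b} \notin \Q^2$. For $R_2$, only the condition on $4(-a + 2\sqrt{b})$ is made automatic by $g$, leaving the two genuine requirements $16\sqrt{b} \notin \Q^2$ (i.e. $\sqrt{b} \notin \Q^2$) and $64(2b - a\sqrt{b}) \notin \Q^2$ (i.e. $2b - a\sqrt{b} \notin \Q^2$). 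For $R_3$, the condition $16(a^2 - 4b) \notin \Q^2$ follows from $a^2 - 4b \notin \Q^2$, leaving $4(a + 2\sqrt{b}) \notin \Q^2$ and $4(a - 2\sqrt{b}) \notin \Q^2$, that is, $a + 2\sqrt{b} \notin \Q^2$ and $a - 2\sqrt{b} \notin \Q^2$.

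I expect the only delicate points to be bookkeeping rather than conceptual: correctly matching the entries of Table \ref{table1} to the hypotheses of Proposition \ref{evenquarticreducible} (in particular handling the sign convention for $\sqrt{B}$ and verifying the tabulated values), and justifying that $-16\sqrt{b} \notin \Q^2$ via the strict positivity of $\sqrt{b}$, which genuinely relies on $b$ being a \emph{nonzero} square. The contrast between $R_1$, where $-A + 2\sqrt{B} = -16\sqrt{b}$ is automatically a non-square, and $R_2$, where $-A + 2\sqrt{B} = +16\sqrt{b}$ keeps $\sqrt{b} \notin \Q^2$ as a real constraint, is exactly what produces the asymmetry between parts (1) and (2) of the statement. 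Everything else reduces to stripping off the square scalars and invoking the three non-square conditions inherited from the irreducibility of $g$.
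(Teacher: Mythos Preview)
Your proposal is correct and follows essentially the same approach as the paper's own proof: apply Proposition~\ref{evenquarticreducible} to each $R_i$ via Table~\ref{table1}, then prune the resulting three conditions per polynomial using the irreducibility of $g(x)=x^4+ax^2+b$ (which gives $a^2-4b,\,-a\pm 2\sqrt{b}\notin\Q^2$) together with the negativity of $-\sqrt{b}$. Your write-up is slightly more explicit about the bookkeeping (nonvanishing of $b$, the sign convention for $\sqrt{B}$, stripping square scalars), but the argument is the same.
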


\begin{proof}
By Proposition \ref{evenquarticreducible} and Table \ref{table1}, $R_1(x)$ is irreducible if and only if $2b+a\sqrt{b},-a-2\sqrt{b},-\sqrt{b} \notin \Q^2$, $R_2(x)$ is irreducible if and only if $2b-a\sqrt{b},-a+2\sqrt{b},\sqrt{b} \notin \Q^2$, and $R_3(x)$ is irreducible if and only if $a^2-4b,a+2\sqrt{b},a-2\sqrt{b} \notin \Q^2$. Since $x^8+ax^4+b$ is irreducible, it follows that $x^4+ax^2+b$ is also irreducible and hence, $a^2-4b,-a+2\sqrt{b},-a-2\sqrt{b} \notin \Q^2$ by Proposition \ref{evenquarticreducible}. Since $-\sqrt{b}<0$, we also have $-\sqrt{b} \notin \Q^2$. This completes the proof. 
\end{proof}

We now study the irreducibility of $R_i(x^2)$. Recall that we have shown in Proposition \ref{polycompreducible} that for a power compositional octic polynomial $R_i(x^2)$ where $R_i(x)$ is an irreducible even quartic polynomial, $R_i(x^2)$ is irreducible if and only if certain quantities are irrational. We now show that these conditions are always satisfied. 

\begin{proposition} \label{linres1}
Define $R_1,R_2,R_3$ as in {\upshape (\ref{r1})}. Then for every $1 \leq i \leq 3$, $R_i(x^2)$ is irreducible if and only if $R_i(x)$ is irreducible.
\end{proposition}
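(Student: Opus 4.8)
The plan is to invoke Proposition \ref{polycompreducible}, which says that for an irreducible even quartic $R_i(x) = x^4 + A x^2 + B$, the power compositional polynomial $R_i(x^2)$ is reducible precisely when $B = r^4$ for some $r \in \Q$ and at least one of the four nested radicals
\[
\sqrt{\pm 4r \pm 2\sqrt{2r^2 + A}}
\]
lies in $\Q$. Since we already know from Proposition \ref{linres2} exactly when each $R_i(x)$ is irreducible, it suffices to show that whenever $R_i(x)$ is irreducible, the reducibility conditions of Proposition \ref{polycompreducible} \emph{cannot} all be met. The natural strategy is to assume for contradiction that $R_i(x^2)$ is reducible while $R_i(x)$ is irreducible, extract the equation $B = r^4$, and derive a contradiction with the irreducibility hypotheses on $a$ and $b$.

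First I would work out, for each of the three polynomials, what the constant term $B = r^4$ being a fourth power actually forces. For $R_1$ we have $B = (a - 2\sqrt{b})^2$, so $B = r^4$ means $a - 2\sqrt{b} = \pm r^2$, i.e.\ $\pm(a - 2\sqrt{b}) \in \Q^2$; similarly $R_2$ gives $\pm(a + 2\sqrt{b}) \in \Q^2$, and $R_3$ gives $16b = r^4$, i.e.\ $\sqrt{b} \in \Q^2$ (recall $b \in \Q^2$ so $\sqrt{b} \in \Q$). The key observation is that the irreducibility of the original octic $x^8 + ax^4 + b$ forces the underlying quartic $x^4 + ax^2 + b$ to be irreducible, which by Proposition \ref{evenquarticreducible} gives $-a + 2\sqrt{b}, -a - 2\sqrt{b} \notin \Q^2$. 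This immediately rules out the ``wrong sign'' possibilities in each case, pinning down which single sign can survive.

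Next I would feed the surviving value of $r$ back into the inner radical $2r^2 + A$ and then into the outer radical, showing that requiring any of the four expressions $\sqrt{\pm 4r \pm 2\sqrt{2r^2+A}}$ to be rational forces one of the quantities already excluded by Proposition \ref{linres2} to be a square. Concretely, for each $R_i$ I expect $2r^2 + A$ to simplify to a perfect square times one of the ``discriminant-type'' quantities appearing in Table \ref{table1} (such as $2b \pm a\sqrt{b}$ or $a^2 - 4b$), so that rationality of the inner radical already contradicts the irreducibility criterion from Proposition \ref{linres2}; and even where the inner radical is rational, the outer radical will encode a condition like $a \pm 2\sqrt{b} \in \Q^2$ that is likewise forbidden. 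Since each $R_i$ has a slightly different constant term and leading quadratic coefficient, this computation must be carried out separately for $i = 1, 2, 3$, but the mechanism is the same in all three cases.

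\textbf{The main obstacle} is the bookkeeping in the third step: because both the inner and outer radicals must be handled and because the sign choices in $\pm 4r$ and $\pm 2\sqrt{\,\cdot\,}$ interact with the sign ambiguity in $r = \pm\sqrt{a \mp 2\sqrt{b}}$, there is a genuine risk of missing a case in which the inner radical happens to be rational and the reducibility condition is satisfied by a nontrivial coincidence. I would organize the argument by first reducing to the single admissible sign of $r$ (using the excluded squares from the irreducible quartic), then treating the inner radical $\sqrt{2r^2 + A}$, and only finally the outer radical, so that each forbidden square from Proposition \ref{linres2} is contradicted at the earliest possible stage. The converse direction is free: if $R_i(x)$ is reducible then $R_i(x^2)$ is trivially reducible, so only the implication ``$R_i(x)$ irreducible $\Rightarrow R_i(x^2)$ irreducible'' requires work.
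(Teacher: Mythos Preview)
Your plan is exactly the paper's: apply Proposition~\ref{polycompreducible}, solve $B=r^4$, discard the sign of $r^2$ excluded by the irreducibility of $x^4+ax^2+b$ via Proposition~\ref{evenquarticreducible}, and then rule out rationality of the nested radicals case by case. One correction to your anticipated endgame: for $i=1,2$ the inner radical $\sqrt{2r^2+A}$ works out to $2\sqrt{a\pm 2\sqrt{b}}$, which is \emph{not} excluded by Proposition~\ref{linres2} for that particular $i$ and may well be rational; in that event the paper observes that the outer expression becomes $2\sqrt{\pm\sqrt{a-2\sqrt{b}}\pm\sqrt{a+2\sqrt{b}}}$, and if this were rational its fourth power would differ from $2a$ by $\pm 2\sqrt{a^2-4b}$, forcing $a^2-4b\in\Q^2$ and contradicting the irreducibility of $x^2+ax+b$ --- so the final contradiction is with $a^2-4b$, not with $a\pm 2\sqrt{b}\in\Q^2$ as you guessed. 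For $i=3$ your expectation is on the nose: $2r^2+A=4(-a+2\sqrt{b})$, and the inner radical is already irrational.
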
 

\begin{proof}
It is clear that if $R_i(x)$ is reducible then $R_i(x^2)$ is also reducible. Now suppose that $R_i(x)$ is irreducible. For $i=1$,  the solutions to $r^4=(a-2\sqrt{b})^2$ are $r=\pm\sqrt{a-2\sqrt{b}}$ and $r=\pm\sqrt{-a+2\sqrt{b}}$. Since $x^8+ax^4+b$ is irreducible, it follows that $x^4+ax^2+b$ is also irreducible and hence, $-a+2\sqrt{b} \notin \Q^2$ by Proposition \ref{evenquarticreducible}. Now if $\sqrt{a-2\sqrt{b}} \notin \Q$, then we are done. If not, then
\begin{eqnarray*}
&& \sqrt{\pm 4\left(\sqrt{a-2\sqrt{b}}\right) \pm 2\sqrt{2(a-2\sqrt{b})+(2a+12\sqrt{b})}} \\
&=& 2\sqrt{\pm \sqrt{a-2\sqrt{b}} \pm \sqrt{a+2\sqrt{b}}} \notin \Q,
\end{eqnarray*}
otherwise
\begin{equation*}
\sqrt{a^2-4b} = \left|\left(\sqrt{\pm \sqrt{a-2\sqrt{b}} \pm \sqrt{a+2\sqrt{b}}}\right)^4 - 2a\right| \in \Q
\end{equation*}
which implies that $x^2+ax+b$ is reducible, a contradiction to the fact that $x^8+ax^4+b$ is irreducible. Similarly for $i=2$, the solutions to $r^4=(a+2\sqrt{b})^2$ are $r=\pm\sqrt{a+2\sqrt{b}}$ and $r=\pm\sqrt{-a-2\sqrt{b}}$. Since $x^8+ax^4+b$ is irreducible, it follows that $x^4+ax^2+b$ is also irreducible and hence, $-a-2\sqrt{b} \notin \Q^2$ by Proposition \ref{evenquarticreducible}. Now if $\sqrt{a+2\sqrt{b}} \notin \Q$, then we are done. If not, then
\begin{eqnarray*}
&& \sqrt{\pm 4\left(\sqrt{a+2\sqrt{b}}\right) \pm 2\sqrt{2(a+2\sqrt{b})+(2a-12\sqrt{b})}} \\
&=& 2\sqrt{\pm \sqrt{a+2\sqrt{b}} \pm \sqrt{a-2\sqrt{b}}} \notin \Q.
\end{eqnarray*} 
For $i=3$, the solutions to $r^4=16b$ are $r=\pm2\sqrt{\sqrt{b}}$ and $r=\pm2\sqrt{\sqrt{-b}}$. Since $b \in \Q^2$, it follows that $-b \notin \Q^2$ and hence, $\sqrt{-b} \notin \Q^2$. Now if $\sqrt{\sqrt{b}} \notin \Q$, then we are done. If not, then
\begin{eqnarray*}
\sqrt{\pm 4\left(2\sqrt{\sqrt{b}}\right) \pm 2\sqrt{2(4\sqrt{b})+(-4a)}} = 2\sqrt{\pm 2\sqrt{\sqrt{b}} \pm \sqrt{-a+2\sqrt{b}}} \notin \Q,
\end{eqnarray*}
since
\begin{eqnarray*}
\pm 2\sqrt{\sqrt{b}} \pm \sqrt{-a+2\sqrt{b}} \notin \Q,
\end{eqnarray*}
otherwise $-a+2\sqrt{b} \in \Q^2$, which by Proposition \ref{evenquarticreducible} implies that $x^4+ax^2+b$ is reducible and hence, $x^8+ax^4+b$ is reducible, a contradiction.
\end{proof}

We can then use Proposition \ref{linres1} to give a complete factorization of $R_i(x^2)$.

\begin{proposition} \label{linresfactor}
Define $R_1,R_2,R_3$ as in {\upshape (\ref{r1})}. Then
\begin{enumerate}
\item $R_1(x^2)$ is irreducible if and only if $2b+a\sqrt{b} \notin \Q^2$. If $2b+a\sqrt{b} \in \Q^2$, then
\begin{eqnarray*}
R_1(x^2) = \left(x^4+a+6\sqrt{b}+4\sqrt{2b+a\sqrt{b}}\right)\left(x^4+a+6\sqrt{b}-4\sqrt{2b+a\sqrt{b}}\right).
\end{eqnarray*}
\item $R_2(x^2)$ is irreducible if and only if $\sqrt{b} \notin \Q^2$ and $2b-a\sqrt{b} \notin \Q^2$.
\begin{enumerate}
\item If $\sqrt{b} \in \Q^2$, then
\begin{eqnarray*}
R_2(x^2) = \left(x^4+4\sqrt{\sqrt{b}}x^2+a+2\sqrt{b}\right)\left(x^4-4\sqrt{\sqrt{b}}x^2+a+2\sqrt{b}\right).
\end{eqnarray*}
\item If $2b-a\sqrt{b} \in \Q^2$, then
\begin{eqnarray*}
R_2(x^2) = \left(x^4+a-6\sqrt{b}+4\sqrt{2b-a\sqrt{b}}\right)\left(x^4+a-6\sqrt{b}-4\sqrt{2b-a\sqrt{b}}\right).
\end{eqnarray*}
\end{enumerate}
\item $R_3(x^2)$ is irreducible if and only if $a-2\sqrt{b} \notin \Q^2$ and $a+2\sqrt{b} \notin \Q^2$.
\begin{enumerate}
\item If $a-2\sqrt{b} \in \Q^2$, then
\begin{eqnarray*}
R_3(x^2) = \left(x^4+2\sqrt{a-2\sqrt{b}}x^2-4\sqrt{b}\right)\left(x^4-2\sqrt{a-2\sqrt{b}}x^2-4\sqrt{b}\right).
\end{eqnarray*}
\item If $a+2\sqrt{b} \in \Q^2$, then
\begin{eqnarray*}
R_3(x^2) = \left(x^4+2\sqrt{a+2\sqrt{b}}x^2+4\sqrt{b}\right)\left(x^4-2\sqrt{a+2\sqrt{b}}x^2+4\sqrt{b}\right).
\end{eqnarray*}
\end{enumerate}
\end{enumerate}
Moreover, each of the irreducible factors in $R_i(x^2)$ occurs as a factor of $R(x)$ with multiplicity one. 
\end{proposition}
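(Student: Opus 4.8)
The plan is to treat the three biconditionals, the explicit factorizations, and the multiplicity-one assertion in turn. The biconditionals need no new work: Proposition \ref{linres1} gives that $R_i(x^2)$ is irreducible exactly when $R_i(x)$ is, and Proposition \ref{linres2} records exactly when each $R_i(x)$ is irreducible, so combining them yields the three stated criteria immediately.

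For the factorizations I would use the standard splitting of an even quartic $x^4+Ax^2+B$ into two quadratics. If $A^2-4B\in\Q^2$, it factors as $(x^2+c_+)(x^2+c_-)$ with $c_\pm=\frac{1}{2}(A\pm\sqrt{A^2-4B})$; if instead $B\in\Q^2$, it may factor as $(x^2+sx+t)(x^2-sx+t)$ with $t=\pm\sqrt{B}$ and $s^2=-A\pm2\sqrt{B}$ (matching signs), which is possible exactly when the chosen $-A\pm2\sqrt{B}$ lies in $\Q^2$. For each $R_i$, Table \ref{table1} together with the irreducibility of $x^4+ax^2+b$ (which by Proposition \ref{evenquarticreducible} forbids $a^2-4b,\,-a\pm2\sqrt{b}\in\Q^2$) isolates which route can occur: $R_1$ only through $A^2-4B=64(2b+a\sqrt{b})$; $R_2$ through $-A+2\sqrt{B}=16\sqrt{b}$ when $\sqrt{b}\in\Q^2$ or through $A^2-4B=64(2b-a\sqrt{b})$; and $R_3$ through $-A-2\sqrt{B}=4(a-2\sqrt{b})$ or $-A+2\sqrt{B}=4(a+2\sqrt{b})$. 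Since substituting $x\mapsto x^2$ carries a factorization $R_i(x)=P(x)Q(x)$ to $R_i(x^2)=P(x^2)Q(x^2)$, reading off $c_\pm$, $s$, and $t$ in each case reproduces the six displayed factorizations (note that $b\in\Q^2$ makes $a\pm2\sqrt{b}$ and $4\sqrt{b}$ rational, so the relevant $\sqrt{B}$ lies in $\Q$); checking that the coefficients agree is a routine expansion.

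For the multiplicity-one claim, recall $R(x)=x^4\,R_1(x^2)R_2(x^2)R_3(x^2)$. Any irreducible factor $p$ of some $R_i(x^2)$ is distinct from $x$, since each $R_i$ has nonzero constant term ($a^2\neq4b$ and $b\neq0$); hence the multiplicity of $p$ in $R(x)$ equals its total multiplicity in the three factors $R_j(x^2)$. It therefore suffices to show that each $R_i(x^2)$ is separable and that the three $R_i(x^2)$ are pairwise coprime. Separability is easy: the even quartic $x^4+Ax^2+B$ has discriminant $16B(A^2-4B)^2$, and here $B\neq0$ and $A^2-4B\neq0$ (either vanishing would force $a^2=4b$ or $b=0$), so each $R_i$ is separable with no zero root, whence $R_i(x^2)$ is separable. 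Finally, since a common root of $R_i(x^2)$ and $R_j(x^2)$ squares to a common root of $R_i$ and $R_j$, pairwise coprimality of the $R_i(x^2)$ is equivalent to $\Res(R_i,R_j)\neq0$.

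The main obstacle is this coprimality, which I expect to settle through the equivalent statement that the twenty-four nonzero sums $\alpha_i+\alpha_j$ are pairwise distinct. Write the roots of $f$ as the fourth roots $\{\pm\gamma,\pm i\gamma\}$ of $\beta_1$ and $\{\pm\delta,\pm i\delta\}$ of $\beta_2$, where $\beta_1,\beta_2$ are the roots of $y^2+ay+b$ (so that $\beta_1+\beta_2=-a$ and $\beta_1\beta_2=b$). A repeated nonzero sum yields an equality $\alpha_i+\alpha_j=\alpha_k+\alpha_l$ among four distinct roots, hence a relation $p\gamma=q\delta$ with $p,q$ Gaussian integers of small norm; taking fourth powers gives $\beta_1/\beta_2=(q/p)^4$. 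A short case analysis on the possible $p,q$ shows this ratio can only equal $1$, $-4$, $-1/4$, or a non-real number of modulus different from one. The value $1$ is excluded by $a^2-4b\neq0$, as it gives $\beta_1=\beta_2$; a non-real value is impossible because $\beta_1/\beta_2$ is real when $\beta_1,\beta_2$ are real and has modulus one when they are complex conjugates; and $-4$ and $-1/4$, being negative reals, would force $\beta_1,\beta_2$ to be real of opposite signs, making $b=\beta_1\beta_2<0$ and contradicting $b\in\Q^2$. A direct check identifies the eight within-block sums as exactly the roots of $R_3(x^2)$ and the sixteen cross sums as the roots of $R_1(x^2)R_2(x^2)$, so the distinctness just proved yields the required pairwise coprimality, equivalently $\Res(R_i,R_j)\neq0$, completing the proof.
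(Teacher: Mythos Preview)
Your treatment of the irreducibility criteria and of the explicit factorizations is essentially identical to the paper's: both combine Propositions~\ref{linres1} and~\ref{linres2} for the biconditionals, and both obtain the displayed factorizations from the standard splitting of an even quartic (what the paper phrases as ``mimicking the algebraic calculations in the proof of Proposition~\ref{evenquarticreducible}'').

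For the multiplicity-one assertion you take a genuinely different route. The paper argues at the level of the explicit irreducible factors it has just written down: it checks pairwise that two of those factors (degree four or degree eight) coincide only when $a=2\sqrt{b}$, $a=-2\sqrt{b}$, or $b=0$, each of which contradicts irreducibility of $f$. Your argument instead works at the level of roots: writing the eight roots as $\{\pm\gamma,\pm i\gamma,\pm\delta,\pm i\delta\}$, you show directly that the twenty-four nonzero pairwise sums are distinct by reducing any coincidence to $\beta_1/\beta_2=(q/p)^4$ for small Gaussian integers $p,q$, and then eliminating the resulting values $1$, $-4$, $-1/4$, and the non-real possibilities. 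This is correct; the case enumeration behind ``a short case analysis'' indeed yields exactly those values, and the exclusion of $-4,-1/4$ via the sign of $b=\beta_1\beta_2$ is a nice touch that uses the standing hypothesis $b\in\Q^2$. One small omission in your sketch: the passage ``hence a relation $p\gamma=q\delta$'' tacitly sets aside the case where all four roots in $\alpha_i+\alpha_j=\alpha_k+\alpha_l$ lie in the same block, but that case is disposed of trivially (the two complementary pairs among $\{1,i,-1,-i\}$ have sums that are negatives of each other, so equality forces the zero sum you have already excluded).

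As to what each approach buys: the paper's coefficient comparison is shorter once the explicit factorizations are in hand and pinpoints the degenerate $(a,b)$ directly, but it requires running through several pairs of factors. Your root-based argument is independent of the factorizations, handles separability and coprimality in one stroke (so your separate discriminant check for separability is correct but redundant), and makes transparent \emph{why} the only obstructions are $a^2=4b$ and $b\le 0$; it also scales naturally if one were to study $f(x)=x^{4n}+ax^{2n}+b$ with the same block structure of roots.
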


\begin{proof}
The conditions for irreducibility of polynomials $R_i(x^2)$ follow from Propositions \ref{linres2} and \ref{linres1}, whereas the factorization of $R_i(x^2)$ can be obtained by mimicking the algebraic calculations in the proof of Proposition \ref{evenquarticreducible} (see \cite[Theorem 2]{kappewarren}). Lastly, it is routine to verify that any two irreducible factors described above are identical if and only if $a=2\sqrt{b}$, $a=-2\sqrt{b}$ or $b=0$. Each of these possibilities implies that $x^8+ax^4+b$ is reducible, a contradiction.
\end{proof}

\subsection{Exact possible Galois groups}

We now apply the filtering framework used in \cite{awtreyoctic} (see Section \ref{filter}).

\begin{proposition} \label{possiblegal}
Let $f(x)=x^8+ax^4+b \in \Q[x]$ be an irreducible polynomial where $b \in \Q^2$. Then $\Gal(f)$ is 8T2, 8T3, 8T4, 8T5, 8T9, 8T11 or 8T22.
\end{proposition}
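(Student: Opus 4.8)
The plan is to run the filtering framework of Section~\ref{filter} with the associated quartic $g(x)=x^4+ax^2+b$, extracting two invariants of $\Gal(f)$: its subfield content and its parity. Since $f(x)=g(x^2)$ is irreducible, $g(x)$ is irreducible, and as $b \in \Q^2$ Proposition~\ref{quarticgal} forces $\Gal(g)=E_4$. Because the splitting field of $g$ is the normal closure of the degree-four subfield $K_g \subseteq K_f$, the group $E_4$ necessarily occurs in the subfield content of $f$; this is the first constraint on $\Gal(f)$.

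For the second constraint I would pin down the parity of $\Gal(f)$ through its discriminant. Applying Proposition~\ref{polycompdisc} with $m=4$ and $k=2$ (so that $n=8$ and the constant term is $c=b$) gives $\Disc(f) \in \Q^2$ if and only if $(-1)^{n/2}c = (-1)^{4}b = b \in \Q^2$. The hypothesis $b \in \Q^2$ then yields $\Disc(f) \in \Q^2$, so $\Gal(f)$ is contained in $A_8$; that is, $\Gal(f)$ is even.

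It remains to intersect these two conditions against the list of all $50$ transitive subgroups of $S_8$. Using the table of \cite{lmfdb}, I would keep precisely those groups that are even (Parity column) and carry $E_4$ in their subfield content (Subfields column); the groups surviving both filters are exactly 8T2, 8T3, 8T4, 8T5, 8T9, 8T11 and 8T22, which is the assertion. As a structural sanity check, observe that $K_f$ arises from the splitting field of $g$ by adjoining square roots of the roots of $g$, so $\Gal(f)$ embeds in the imprimitive group $C_2 \wr E_4$ of order $64$; in particular $\Gal(f)$ is a transitive $2$-group, in agreement with the fact that all seven candidate groups have $2$-power order.

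The obstacle here is verificational rather than conceptual. The two filters are immediate to state, but establishing that they isolate \emph{exactly} these seven groups requires a finite yet careful audit over all $50$ transitive octic groups: one must confirm that each of the seven is even and lists $E_4$ among its subfields, and, conversely, that every other even transitive octic group omits $E_4$ from its subfield content. For the groups that actually arise in the form $\Gal(h(x^2))$ this omission reflects the remaining cases $C_4$ and $D_4$ of Proposition~\ref{quarticgal}. This bookkeeping is most reliably discharged using the group-theoretic data in \cite{lmfdb}.
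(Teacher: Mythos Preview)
Your proposal is correct and follows essentially the same approach as the paper: identify $\Gal(g)=E_4$ via Proposition~\ref{quarticgal}, deduce even parity from Proposition~\ref{polycompdisc}, and then filter the 50 transitive subgroups of $S_8$ against these two invariants using the LMFDB tables. The paper additionally names the nine groups with $E_4$ in their subfield content (8T2, 8T3, 8T4, 8T5, 8T9, 8T11, 8T21, 8T22, 8T31) before eliminating 8T21 and 8T31 by parity, whereas you apply both filters simultaneously; your extra sanity check via the embedding into $C_2 \wr E_4$ is not in the paper but is a helpful consistency observation.
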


\begin{proof}
In view of Proposition \ref{quarticgal}, we have $\Gal(x^4+ax^2+b)$ is $E_4$. We examine all 50 conjugacy classes of transitive subgroups of $S_8$ as given in LMFDB \cite{lmfdb}, and only nine of them have $E_4$ in its subfield content: 8T2, 8T3, 8T4, 8T5, 8T9, 8T11, 8T21, 8T22 and 8T31. Furthermore, it follows from Proposition \ref{polycompdisc} that $\Disc(f) \in \Q^2$ and hence, $\Gal(f)$ is contained in $A_8$. This is true for only seven of the nine possibilities, namely, 8T2, 8T3, 8T4, 8T5, 8T9, 8T11 and 8T22.
\end{proof}

We now rule out the possibility of 8T5, the quaternion group of order eight, as Galois group of this polynomial family. We then give numerical examples for each of the remaining six possibilities. Our proof for the impossibility of 8T5 is motivated by \cite{dean} which characterized the subfield structure of the splitting field of a polynomial with quaternion Galois group. The proof requires the use of Proposition \ref{quarticgal} to compute Galois groups of even quartic polynomials over quadratic fields and so, we shall begin with a simple lemma characterizing squares in quadratic fields.

\begin{lemma} \label{quadsquare}
Let $d \in \Q \setminus \Q^2$ and $x \in \Q$. Then $x \in \Q(\sqrt{d})^2$ if and only if $x \in \Q^2$ or $dx \in \Q^2$.
\end{lemma}

\begin{proof}
Suppose that $x \in \Q(\sqrt{d})^2$. Then
\begin{eqnarray*}
x=(a+b\sqrt{d})^2=(a^2+db^2)+2ab\sqrt{d}
\end{eqnarray*}
for some $a,b \in \Q$. Since $x \in \Q$ and $\sqrt{d} \notin \Q$, we have $2ab=0$. This occurs if and only if $a=0$ or $b=0$. The case $b=0$ corresponds to $x=a^2 \in \Q^2$, whereas $a=0$ corresponds to $x=db^2$, that is, $dx=(db)^2 \in \Q^2$.
\end{proof}

\begin{theorem} \label{exactpossiblegal}
Let $f(x)=x^8+ax^4+b \in \Q[x]$ be an irreducible polynomial where $b \in \Q^2$. Then $\Gal(f)$ is 8T2, 8T3, 8T4, 8T9, 8T11 or 8T22. Moreover, each of these possibilities does occur.
\end{theorem}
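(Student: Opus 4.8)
The starting point is Proposition \ref{possiblegal}, which already restricts $\Gal(f)$ to the seven classes 8T2, 8T3, 8T4, 8T5, 8T9, 8T11, 8T22. The theorem therefore reduces to two tasks: ruling out 8T5 (the quaternion group), and realising each of the remaining six. I would spend almost all of the effort on the exclusion of 8T5. Assume $\Gal(f)$ is the quaternion group with splitting field $L$, write $c=\sqrt{b}\in\Q$ (so $c>0$), and let $\alpha,\beta$ be roots of $g(x)=x^4+ax^2+b$ with $\alpha\beta=c$. Following the known description of the subfield structure of a quaternion extension, the fixed field of the unique central involution is the biquadratic field $M=\Q(\alpha)$, the splitting field of $g$, whose three quadratic subfields are $F_1=\Q(\sqrt{a^2-4b})$, $F_2=\Q(\alpha+\beta)=\Q(\sqrt{-a+2c})$ and $F_3=\Q(\alpha-\beta)=\Q(\sqrt{-a-2c})$. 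Since every order-four subgroup of the quaternion group is cyclic, $\Gal(L/F_j)\cong C_4$ for $j=1,2,3$, and the central involution is the automorphism $\rho$ negating every root of $f$ (which exists because $f$ is even).

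The first key step extracts an arithmetic constraint from $F_1$. A generator $\sigma$ of $\Gal(L/F_1)\cong C_4$ fixes $\alpha^2$ but cannot fix $\alpha$, for otherwise $\sigma$ would fix $M$ and hence lie in the two-element center, contradicting $\mathrm{ord}(\sigma)=4$; thus $\sigma(\alpha)=-\alpha$ and $\sigma(\sqrt{\alpha})=\pm\sqrt{-1}\,\sqrt{\alpha}$. Demanding $\sigma^2=\rho$ then forces $\sigma$ to fix $\sqrt{-1}$, i.e.\ $\sqrt{-1}\in F_1$. Hence $a^2-4b=-e^2$ for some $e\in\Q^{\ast}$; equivalently $4b-a^2\in\Q^2$.

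The second step works over $F_2$ and $F_3$. Over $F_2$ the octic factors as $f=\big(x^4-(\alpha+\beta)x^2+c\big)\big(x^4+(\alpha+\beta)x^2+c\big)$, and the generator of $\Gal(L/F_2)$ that sends $\alpha\mapsto\beta$ permutes the four roots $\pm\sqrt{\alpha},\pm\sqrt{\beta}$ of the first factor as a single $4$-cycle; that factor is therefore irreducible over $F_2$ with cyclic quartic Galois group, so Proposition \ref{quarticgal}(2) yields $c(-a-2c)\in F_2^2$. The analogous computation over $F_3$, using the factor $x^4-(\alpha-\beta)x^2-c$, gives $(-c)(-a+2c)\in F_3^2$. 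Writing $u=-ac-2b$ and $v=ac-2b$ for these two rational numbers, one checks $u+v=-4b<0$ and $uv=b(4b-a^2)=(ce)^2>0$, so $u,v$ are both negative and in particular not rational squares; Lemma \ref{quadsquare} then upgrades the two membership statements to $(-a+2c)\,u\in\Q^2$ and $(-a-2c)\,v\in\Q^2$.

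The contradiction is now immediate on multiplying these last two relations: grouping the factors and using $(-a+2c)(-a-2c)=a^2-4b=-e^2$ together with $uv=(ce)^2$, the product of two rational squares is forced to equal $-(ce^2)^2<0$, which is impossible. This excludes 8T5. I expect the genuine work to lie in the middle steps — confirming that the designated quartic factor over each $F_j$ really is irreducible with a $C_4$-action (so that Proposition \ref{quarticgal} applies), and steering the square classes through Lemma \ref{quadsquare} — where the sign data ($u,v<0$ and $a^2-4b=-e^2$) is precisely what manufactures the fatal negative square. For the realisability claim I would conclude by exhibiting one explicit polynomial for each of the six admissible groups and certifying its Galois group, reading it off from the factorization type of the resolvent $R(x)$ via Proposition \ref{linresfactor} under the constraints of Proposition \ref{possiblegal} (or by a direct computation).
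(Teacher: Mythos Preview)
Your argument is correct, and it takes a genuinely different route from the paper's proof. Both proofs factor $f$ over the quadratic fields $F_2=\Q(\sqrt{-a+2c})$ and $F_3=\Q(\sqrt{-a-2c})$ and apply Proposition~\ref{quarticgal} together with Lemma~\ref{quadsquare} to obtain square-class constraints on $u=-ac-2b$ and $v=ac-2b$. The paper, however, does \emph{not} use the third quadratic subfield $F_1=\Q(\sqrt{a^2-4b})$; instead it is left with four cases according to which alternative of Lemma~\ref{quadsquare} holds, and in two of those cases it must invoke Proposition~\ref{linresfactor} and the orbit-length pattern $4,8^3$ of 8T5 on $2$-sets to reach a contradiction (namely, that $R_3(x^2)$ would be reducible). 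Your extra step over $F_1$---forcing $\sqrt{-1}\in F_1$ via the requirement $\sigma^2=\rho$, hence $4b-a^2=e^2$---is exactly what collapses those four cases into one: it makes $uv=b(4b-a^2)=(ce)^2>0$ while $u+v=-4b<0$, so $u,v<0$ and the ``$u\in\Q^2$'' and ``$v\in\Q^2$'' branches of Lemma~\ref{quadsquare} die immediately. The single remaining product $(-a+2c)(-a-2c)\,uv=-e^2(ce)^2$ then yields the contradiction directly. The upshot is that your proof is self-contained and purely arithmetic, bypassing the paper's reliance on the linear-resolvent machinery (Proposition~\ref{linresfactor}) and the GAP orbit computation; the paper's version, by contrast, stays closer to the resolvent framework that drives the rest of Section~\ref{resultde}. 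For the realisability clause, the paper simply lists the six polynomials $x^8+1$, $x^8-x^4+1$, $x^8+3x^4+1$, $x^8+2x^4+4$, $x^8+9$, $x^8+x^4+4$, which matches your intended conclusion.
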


\begin{proof}
Suppose to the contrary that $\Gal(f)$ is 8T5, the quaternion group of order eight. It follows from algebraic calculations that
\begin{equation*}
\theta=\sqrt{\frac{1}{2}\left(\sqrt{-a+2\sqrt{b}}+\sqrt{-a-2\sqrt{b}}\right)}
\end{equation*}
is a root of $f(x)$. Since the degree of the splitting field of $f(x)$ over $\Q$ is eight, it follows that the octic field $L=\Q(\theta)$ is the splitting field of $f(x)$. Recall that the quaternion group is the only group of order eight where each of its subgroups of order four is cyclic. As such, under the Galois correspondence, subgroups of $\Gal(f)$ corresponding to the subfields of $L$ with index four are cyclic. Let $p=-a+2\sqrt{b}$ and $q=-a-2\sqrt{b}$. Factoring $f(x)$ over $\Q(\sqrt{p})$, we have 
\begin{eqnarray*}
f(x)=\big(x^4+\sqrt{p}x^2+\sqrt{b}\big)\big(x^4-\sqrt{p}x^2+\sqrt{b}\big).
\end{eqnarray*}
Since the degree of $L$ over $\Q(\sqrt{p})$ is four, it follows that one of the degree four factors of $f(x)$, say $g(x)$, is the minimal polynomial of $L$ over $\Q(\sqrt{p})$. Note that $\Gal_{\Q(\sqrt{p})}(g)$ is the Galois group of the normal closure of $L/\Q(\sqrt{p})$, which is exactly $\Gal(L/\Q(\sqrt{p}))$, the subgroup of $\Gal(f)$ corresponding to $\Q(\sqrt{p})$. By Proposition \ref{quarticgal}, we have $q\sqrt{b} \in \Q(\sqrt{p})^2$, and it follows from Lemma \ref{quadsquare} that either $q\sqrt{b} \in \Q^2$ or $pq\sqrt{b} \in \Q^2$. Similarly, factoring $f(x)$ over $\Q(\sqrt{q})$, we have
\begin{eqnarray*}
f(x)=\big(x^4+\sqrt{q}x^2-\sqrt{b}\big)\big(x^4-\sqrt{q}x^2-\sqrt{b}\big).
\end{eqnarray*}
It follows that $-p\sqrt{b} \in \Q(\sqrt{q})^2$, that is, either $-p\sqrt{b} \in \Q^2$ or $-pq\sqrt{b} \in \Q^2$. Our assumption requires that the conditions on $\Q(\sqrt{p})$ and $\Q(\sqrt{q})$ hold true simultaneously. We now show that all four possible cases resulting from these two conditions are impossible.
\par First, note that it is impossible for $p=0$, $q=0$ or $b=0$, since each of these possibilities implies that $f(x)$ is reducible. Also note that $\Q^2$ is closed under multiplication and division.
\par \textbf{Case 1:} $pq\sqrt{b} \in \Q^2$ and $-pq\sqrt{b} \in \Q^2$. Then $-1 \in \Q^2$, a contradiction. 
\par \textbf{Case 2:} $q\sqrt{b} \in \Q^2$ and $-pq\sqrt{b} \in \Q^2$. Then $a-2\sqrt{b}=-p \in \Q^2$. By Proposition \ref{linresfactor}, the degree eight factor $R_3(x^2)$ of the linear resolvent $R(x)$ of $f(x)$ is reducible. However, if $\Gal(f)$ is 8T5, then all degree eight factors of the linear resolvent $R(x)$ are irreducible (see Section \ref{gremark}), a contradiction.
\par \textbf{Case 3:} $pq\sqrt{b} \in \Q^2$ and $-p\sqrt{b} \in \Q^2$. Then $a+2\sqrt{b}=-q \in \Q^2$. Likewise by Proposition \ref{linresfactor}, the degree eight factor $R_3(x^2)$ of the linear resolvent $R(x)$ of $f(x)$ is reducible. Similar to Case 2, we arrive at a contradiction.
\par \textbf{Case 4:} $q\sqrt{b} \in \Q^2$ and $-p\sqrt{b} \in \Q^2$. Then $-p/q \in \Q^2$ and hence, $-pq =(-p/q)q^2 \in \Q^2$, that is, $pq=-r^2$ for some $r \in \Q$. We also have $q(p-q)=4q\sqrt{b} \in \Q^2$, that is, $q(p-q)=s^2$ for some $s \in \Q$. Substituting $pq$ with $-r^2$ and rearranging, we have $q^2+r^2+s^2=0$ and hence, $q=0$, a contradiction.
\par Lastly, to show that each of these six possibilities does occur, note that $x^8+1$, $x^8-x^4+1$, $x^8+3x^4+1$, $x^8+2x^4+4$, $x^8+9$, $x^8+x^4+4$ are irreducible polynomials with Galois group 8T2, 8T3, 8T4, 8T9, 8T11 and 8T22, respectively. The same examples are also given in \cite{awtreyoctic}.
\end{proof}

\subsection{Classification of Galois groups} \label{gremark}

Let $f(x)=x^8+ax^4+b \in \Q[x]$ be an irreducible polynomial where $b \in \Q^2$. For each of the possible candidates $G=\Gal(f)$ obtained in Theorem \ref{exactpossiblegal}, we determine the list of degrees of irreducible factors of the linear resolvent $R(x)$. This is equivalent to the list of orbit lengths for the action of $G$ on all 2-sets of eight letters. We implement this in GAP \cite{gap} by the command \verb|OrbitLengthsDomain(G,Combinations([1..8],2),OnSets)|, and record the output in Table \ref{table2}.

\begin{table}[h]
\caption{Possible Galois groups $G$ of $f(x)=x^8+ax^4+b$ (where $b \in \Q^2$) and list of orbit lengths for the action of $G$ on all 2-sets of eight letters. Multiplicative notation is used to denote multiplicity. \label{table2}}
{\begin{tabular}{|c|c|c|c|c|c|c|} \hline 
$\mathbf{G}$ & 8T2 & 8T3 & 8T4 & 8T9 & 8T11 & 8T22 \\ \hline
\textbf{Orbits} & $4^3,8^2$ & $4^7$ & $4^5,8$ & $4^3,8^2$ & $4,8^3$ & $4,8^3$ \\ \hline
\end{tabular}}
\end{table}

We are now ready to describe an algorithm to completely classify the Galois groups of doubly even octic polynomials $f(x)=x^8+ax^4+b \in \Q[x]$, where $b \in \Q^2$. Our classification mainly relies on the factorization patterns of the three degree eight factors $R_i(x^2)$ of the linear resolvent $R(x)$. We have shown in Proposition \ref{linresfactor} that the irreducible factors in $R_i(x^2)$ occur as a factor of $R(x)$ with multiplicity one. It follows from Table \ref{table2} that $\Gal(f)$ is 8T3 if all three of $R_i(x^2)$ are reducible, $\Gal(f)$ is 8T4 if exactly two of $R_i(x^2)$ are reducible, $\Gal(f)$ is either 8T2 or 8T9 if exactly one of $R_i(x^2)$ is reducible, $\Gal(f)$ is either 8T11 or 8T22 if none of $R_i(x^2)$ is reducible. We remark that the list of orbit lengths for the action of 8T5 on all 2-sets of eight letters is $4,8^3$, which results in a contradiction in Cases 2 and 3 in our proof of Theorem \ref{exactpossiblegal}.

\par To distinguish between 8T2 and 8T9, we first determine the three length four orbits for the action of $G \in \{\text{8T2},\text{8T9}\}$ on the 2-sets of eight letters. For each of these orbits $O$, we determine the image of the permutation representation of $G$ acting on $O$. These can be implemented in GAP \cite{gap} using the commands \verb|Orbits(G,Combinations([1..8],2),OnSets)| and \verb|Action(G,O,OnSets)|. For 8T2, the three orbits correspond to $E_4,C_4,C_4$ whereas for 8T9, the three orbits correspond to $E_4,D_4,D_4$. For each case, we show that $R(x)$ has a factor of the form $(x^4+Ax^2+B)(x^4-Ax^2+B)$. These two degree four factors have the same Galois group. Since the degree four factors occur in $R(x)$ with multiplicity one, it follows from Proposition \ref{soicher} that the Galois group of these factors corresponds to the repeating output. Therefore, we can distinguish between 8T2 and 8T9 by using Proposition \ref{quarticgal} to compute the Galois group of the degree four factor.

\par Recall that 8T11 and 8T22 have orders 16 and 32, respectively, so we can distinguish them by calculating the degree of the splitting field of $f(x)$ over $\Q$. Let $\theta$ be a root of $f(x)$. Factoring $f(x)$ over $\Q(\theta)$, we have $f(x)=(x-\theta)(x+\theta)f_1(x)f_2(x)f_3(x)$ where
\begin{eqnarray*}
f_1(x)=x^2+\theta^2, \quad f_2(x)=x^2-\frac{\sqrt{b}}{\theta^2}, \quad f_3(x)=x^2+\frac{\sqrt{b}}{\theta^2}.
\end{eqnarray*}

Observe that the roots of $f_1(x)$, $f_2(x)$ and $f_3(x)$ are 
\begin{eqnarray*}
\pm\sqrt{-1}\theta, \quad \pm\frac{\sqrt{\sqrt{b}}}{\theta}, \quad \pm\frac{\sqrt{-1}\sqrt{\sqrt{b}}}{\theta},
\end{eqnarray*}
respectively. It follows that the splitting field of $f(x)$ is the biquadratic extension $L=K(\sqrt{-1},\sqrt{\sqrt{b}})$ of $K=\Q(\theta)$, and so
\begin{equation*}
|\Gal(f)|=[L:\Q]=[L:K][K:\Q]=8[L:K] \leq 8(4) = 32,
\end{equation*}
with equality if and only if none of $-1,\sqrt{b},-\sqrt{b}$ is a square in $\Q(\theta)$. As such, the Galois group of $f(x)$ is 8T11 if any of these quantities is a square in $\Q(\theta)$, and 8T22 otherwise. This motivated the following proposition.

\begin{proposition} \label{octicsquares}
Let $f(x)=x^8+ax^4+b \in \Q[x]$ be an irreducible polynomial where $\sqrt{b} \in \Q \setminus \Q^2$, and let $\theta$ be a root of $f(x)$. Then for each $r \in \{-1,\sqrt{b},-\sqrt{b}\}$, we have $r \in \Q(\theta)^2$ if and only if at least one of $r(a^2-4b)$, $r(-a+2\sqrt{b})$ or $r(-a-2\sqrt{b})$ is in $\Q^2$.
\end{proposition}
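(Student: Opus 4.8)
The plan is to climb the tower $\Q \subset L_0 \subset \Q(\theta)$, where $L_0 = \Q(\theta^2)$. Since $x^4+ax^2+b$ is irreducible with Galois group $E_4$ (Proposition \ref{quarticgal}), the field $L_0$ generated by the single root $\theta^2$ is already the full splitting field of $x^4+ax^2+b$, hence Galois over $\Q$ with group $E_4$. Writing $p=-a+2\sqrt{b}$ and $q=-a-2\sqrt{b}$, so that $pq=a^2-4b$, I would read off from the two pairs of roots of $x^4+ax^2+b$ that $L_0=\Q(\sqrt{p},\sqrt{q})$ and that, after normalising the sign of the conjugate root, $\theta^2=\tfrac12(\sqrt{p}+\sqrt{q})$. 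Irreducibility together with Proposition \ref{evenquarticreducible} guarantees $p,q,pq\notin\Q^2$, so this biquadratic description is genuine, and $[\Q(\theta):L_0]=2$ forces $\theta^2\notin L_0^2$.

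First I would reduce the whole question to membership tests inside $L_0$. Since $\Q(\theta)=L_0(\sqrt{\theta^2})$ is a quadratic extension of $L_0$ with $\theta^2\notin L_0^2$, the argument of Lemma \ref{quadsquare} (which is valid over any base field of characteristic not two) shows that for $r\in\Q\subseteq L_0$ one has $r\in\Q(\theta)^2$ if and only if $r\in L_0^2$ or $r\theta^2\in L_0^2$. I would then dispatch the first alternative by running Lemma \ref{quadsquare} twice through the tower $\Q\subset\Q(\sqrt{pq})\subset L_0$, which yields that for $r\in\Q$ one has $r\in L_0^2$ if and only if one of $r,rp,rq,rpq$ lies in $\Q^2$. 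For the three values $r\in\{-1,\sqrt{b},-\sqrt{b}\}$ the alternative $r\in\Q^2$ is vacuous (none is a rational square, the latter two because $\sqrt{b}\in\Q\setminus\Q^2$ by hypothesis and $-\sqrt{b}<0$), so $r\in L_0^2$ is equivalent to one of $rp,rq,rpq\in\Q^2$, which are exactly the quantities $r(-a+2\sqrt{b})$, $r(-a-2\sqrt{b})$, $r(a^2-4b)$ in the statement.

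The crux is to show that the second alternative $r\theta^2\in L_0^2$ never produces a new case. Let $\sigma,\tau$ generate $\Gal(L_0/\Q)=E_4$ with $\tau$ fixing $\sqrt{p}$ and $\sigma$ fixing $\sqrt{q}$. Using $\theta^2=\tfrac12(\sqrt{p}+\sqrt{q})$ I would compute the relative norms
\[
N_{L_0/\Q(\sqrt{p})}(\theta^2)=\theta^2\,\tau(\theta^2)=\sqrt{b},\qquad N_{L_0/\Q(\sqrt{q})}(\theta^2)=\theta^2\,\sigma(\theta^2)=-\sqrt{b}.
\]
If $r\theta^2=\xi^2$ for some $\xi\in L_0$, applying these norms (together with $N(r)=r^2$ for $r\in\Q$) gives $r^2\sqrt{b}\in\Q(\sqrt{p})^2$ and $-r^2\sqrt{b}\in\Q(\sqrt{q})^2$, hence $\sqrt{b}\in\Q(\sqrt{p})^2$ and $-\sqrt{b}\in\Q(\sqrt{q})^2$. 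Since $\pm\sqrt{b}\notin\Q^2$, Lemma \ref{quadsquare} upgrades these to $\sqrt{b}\,p\in\Q^2$ and $-\sqrt{b}\,q\in\Q^2$. Feeding in each value of $r$ then finishes: for $r=\sqrt{b}$ this is $rp\in\Q^2$ directly; for $r=-\sqrt{b}$ it is $rq\in\Q^2$; and for $r=-1$, multiplying the two relations yields $-pq\in\Q^2$, i.e. $rpq\in\Q^2$. In every case $r$ already satisfies the first-alternative condition, so $r\in\Q(\theta)^2$ is equivalent to $r\in L_0^2$, which is the asserted criterion.

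The step I expect to demand the most care is precisely this last one: a priori $r\theta^2\in L_0^2$ could enlarge the set of square classes, and it is only the special arithmetic of the norms $\theta^2\tau(\theta^2)=\sqrt{b}$, $\theta^2\sigma(\theta^2)=-\sqrt{b}$, combined with the non-squareness of $\pm\sqrt{b}$, that forces it to collapse back into the biquadratic condition. The remaining ingredients — identifying $L_0$ with $\Q(\sqrt{p},\sqrt{q})$, justifying $\theta^2\notin L_0^2$ by degree count, and the two routine applications of Lemma \ref{quadsquare} — are bookkeeping that I would carry out explicitly but expect to be straightforward.
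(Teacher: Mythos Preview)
Your argument is correct and takes a genuinely different, more structural route than the paper. The paper expands $\sqrt{r}$ in the basis $1,\theta,\dots,\theta^7$ and uses the automorphisms $\theta\mapsto-\theta$ and $\theta\mapsto i\theta$ (available by transitivity of $\Gal(f)$) to force $\sqrt{r}$ into one of the two shapes $a_0+a_4\theta^4$ or $a_2\theta^2+a_6\theta^6$; it then squares, reduces modulo the relation $\theta^8=-a\theta^4-b$, and solves explicitly for the coefficients, reading off the three quantities case by case. Your approach instead exploits that $L_0=\Q(\theta^2)$ is already the biquadratic splitting field $\Q(\sqrt{p},\sqrt{q})$ of $x^4+ax^2+b$, so that the square classes of $\Q^\times$ in $L_0$ are governed by $p,q,pq$, and then disposes of the extra possibility $r\theta^2\in L_0^2$ via the two relative norms $N_{L_0/\Q(\sqrt{p})}(\theta^2)=\sqrt{b}$ and $N_{L_0/\Q(\sqrt{q})}(\theta^2)=-\sqrt{b}$. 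This is cleaner and explains \emph{why} exactly the three products $r(a^2-4b),\,r(-a\pm 2\sqrt{b})$ appear: they are the nontrivial Kummer classes of the $E_4$-extension $L_0/\Q$. The paper's computation, by contrast, is entirely self-contained and does not invoke the biquadratic description of $L_0$ or norm maps. Both routes rely on the same two inputs you flagged---the non-squareness of $p,q,pq$ from Proposition~\ref{evenquarticreducible} and the hypothesis $\sqrt{b}\notin\Q^2$---and your identification of the norm step as the crux is exactly right: in the paper this corresponds to the more laborious Case~2 analysis where $\sqrt{r}=a_2\theta^2+a_6\theta^6$.
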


\begin{proof}
Suppose that $r \in \Q(\theta)^2$. Then
\begin{eqnarray*}
\sqrt{r} &=& a_7 \theta^7 + a_6 \theta^6 + a_5 \theta^5 + a_4 \theta^4 + a_3 \theta^3 + a_2 \theta^2 + a_1 \theta + a_0.
\end{eqnarray*}
for some $a_0,a_1,a_2,a_3,a_4,a_5,a_6,a_7 \in \Q$. We first claim that we must have $a_1=a_3=a_5=a_7=0$. Suppose to the contrary that this is not the case. Since $f(x)$ is irreducible, $\Gal(f)$ acts transitively on the roots of $f(x)$. In particular, there is a mapping $\sigma \in \Gal(f)$ satisfying $\sigma(\theta)=-\theta$. It follows that
\begin{eqnarray*}
\sigma(\sqrt{r}) = a_6 \theta^6 + a_4 \theta^4 + a_2 \theta^2 + a_0 - (a_7 \theta^7 + a_5 \theta^5 + a_3 \theta^3 + a_1 \theta),
\end{eqnarray*}
and so $\sigma(\sqrt{r}) \neq \sqrt{r}$. By construction, $\sigma(\sqrt{r})$ is another root of the minimal polynomial of $\sqrt{r}$ over $\Q$. Since $r \in \Q$, this minimal polynomial is $x^2-r$ and hence, $\sigma(\sqrt{r})=-\sqrt{r}$, implying that $a_0=a_2=a_4=a_6=0$. 
\par Now pick $\theta'=\sqrt{r}\theta$ or $\theta'=\frac{\sqrt{r}}{\theta}$ such that $\theta'$ is a root of $f(x)$. Note that $\theta' \in \Q(\theta^2)$ and hence, $\Q(\theta') \subset \Q(\theta^2)$. However, $\theta^2$ is a root of the irreducible polynomial $x^4+ax^2+b$, so $[\Q(\theta'):\Q]=8>4=[\Q(\theta^2):\Q]$, a contradiction. This establishes our claim and hence, $\sqrt{r} = a_6 \theta^6 + a_4 \theta^4 + a_2 \theta^2 + a_0$. 
\par We now further claim that either $a_0=a_4=0$ or $a_2=a_6=0$. Suppose to the contrary that this is not the case. Since $\Gal(f)$ acts transitively on the roots of $f(x)$, there is a mapping $\tau \in \Gal(f)$ satisfying $\tau(\theta)=i\theta$. It follows that
\begin{eqnarray*}
\tau(\sqrt{r}) = a_4 \theta^4 + a_0 - (a_6 \theta^6 + a_2 \theta^2),
\end{eqnarray*}
and so $\tau(\sqrt{r}) \neq \sqrt{r}$. Similarly, $\tau(\sqrt{r})$ is another root of the minimal polynomial of $\sqrt{r}$ over $\Q$. Since $r \in \Q$, this minimal polynomial is $x^2-r$ and hence, $\tau(\sqrt{r})=-\sqrt{r}$. It follows that $a_0=a_4=0$, a contradiction.
\par\textbf{Case 1:} $a_2=a_6=0$. Then
\begin{eqnarray*}
r = (a_4 \theta^4 + a_0)^2 &=& a_4^2 \theta^8 + 2a_0 a_4 \theta^4 + a_0^2 \\
&=& a_4^2 \left(-(a \theta^4 + b)\right) + 2a_0 a_4 \theta^4 + a_0^2 \\
&=& (2a_0 a_4 - a a_4^2)\theta^4 + (a_0^2 - b a_4^2).
\end{eqnarray*}
Since $r \in \Q$, the coefficient of $\theta^4$ is zero; that is, $2a_0 a_4 - a a_4^2 = 0$. Now if $a_4=0$, then $r \in \Q^2$, a contradiction. As such $a_4 \neq 0$ and we have $a_0=\frac{a a_4}{2}$. It follows that
\begin{eqnarray*}
r = a_0^2 - b a_4^2 = \left(\frac{a a_4}{2}\right)^2 - b a_4^2 = \frac{a_4^2 (a^2-4b)}{4} = \left(\frac{a_4}{2}\right)^2 (a^2-4b).
\end{eqnarray*}
This implies that $r(a^2-4b) \in \Q^2$.
\par\textbf{Case 2:} $a_0=a_4=0$. Then
\begin{eqnarray*}
r = (a_6 \theta^6 + a_2 \theta^2)^2 &=& a_6^2 \theta^{12} + 2a_2 a_6 \theta^8 + a_2^2 \theta^4 \\
&=& a_6^2 \theta^4 \left(-(a \theta^4 + b)\right) + 2a_2 a_6 \left(-(a \theta^4 + b)\right) + a_2^2 \theta^4 \\
&=& -a a_6^2 \theta^8 + (a_2^2-2a a_2 a_6-b a_6^2)\theta^4 - 2 b a_2 a_6 \\
&=& -a a_6^2 \left(-(a \theta^4 + b)\right) + (a_2^2-2a a_2 a_6-b a_6^2)\theta^4 - 2 b a_2 a_6 \\
&=& (a_2^2-2a a_2 a_6+a^2 a_6^2-b a_6^2)\theta^4 + (ab a_6^2 - 2 b a_2 a_6).
\end{eqnarray*}
Since $r \in \Q$, the coefficient of $\theta^4$ is zero. Now if $a_6=0$, then $a_2=0$ and hence, $r=0 \in \Q^2$, a contradiction. As such, $a_6 \neq 0$ and solving as a quadratic of $a_2$ we have $a_2=a_6(a+\sqrt{b})$ or $a_2=a_6(a-\sqrt{b})$. Now if $a_2=a_6(a+\sqrt{b})$, then
\begin{eqnarray*}
r = ab a_6^2 - 2 b a_2 a_6 &=& ab a_6^2 - 2 b (a_6(a+\sqrt{b})) a_6 \\
&=& b a_6^2 (-a-2\sqrt{b}) \\
&=& (a_6 \sqrt{b})^2 (-a-2\sqrt{b}).
\end{eqnarray*}
Finally, if $a_2=a_6(a-\sqrt{b})$, then
\begin{eqnarray*}
r = ab a_6^2 - 2 b a_2 a_6 &=& ab a_6^2 - 2 b (a_6(a-\sqrt{b})) a_6 \\
&=& b a_6^2 (-a+2\sqrt{b}) \\
&=& (a_6 \sqrt{b})^2 (-a+2\sqrt{b}).
\end{eqnarray*}
This implies that either $r(-a+2\sqrt{b}) \in \Q^2$ or $r(-a-2\sqrt{b}) \in \Q^2$. This proves the necessity. The sufficiency follows from construction.
\end{proof}

Consequently, we have the following algorithm as our main result of this section.

\begin{theorem} \label{deoctics}
Let $f(x)=x^8+ax^4+b \in \Q[x]$ be an irreducible polynomial where $b \in \Q^2$. Then the following algorithm returns $\Gal(f)$.
\begin{enumerate}
\item If $\sqrt{b} \in \Q^2$, then
\begin{enumerate}
\item If $a+2\sqrt{b} \in \Q^2$, return 8T3 and terminate.
\item Else if $a-2\sqrt{b} \in \Q^2$, return 8T4 and terminate.
\item Else if $4b-a^2 \in \Q^2$, return 8T2 and terminate.
\item Otherwise, return 8T9 and terminate.
\end{enumerate}
\item Else if $\sqrt{b} \notin \Q^2$, then
\begin{enumerate}
\item If $a+2\sqrt{b} \in \Q^2$, then
\begin{enumerate}
\item If $2b-a\sqrt{b} \in \Q^2$, return 8T4 and terminate.
\item Else if $-(2b-a\sqrt{b}) \in \Q^2$, return 8T2 and terminate.
\item Otherwise, return 8T9 and terminate.
\end{enumerate}
\item Else if $a-2\sqrt{b} \in \Q^2$, then
\begin{enumerate}
\item If $2b+a\sqrt{b} \in \Q^2$, return 8T4 and terminate.
\item Else if $-(2b+a\sqrt{b}) \in \Q^2$, return 8T2 and terminate.
\item Otherwise, return 8T9 and terminate.
\end{enumerate}
\item Else if $a+2\sqrt{b} \notin \Q^2$ and $a-2\sqrt{b} \notin \Q^2$, then
\begin{enumerate} \itemsep0em
\item If $2b-a\sqrt{b} \in \Q^2$ and $2b+a\sqrt{b} \in \Q^2$, return 8T4 and terminate.
\item Else if either $2b-a\sqrt{b} \in \Q^2$ or $2b+a\sqrt{b} \in \Q^2$, return 8T9 and terminate.
\item Else if $4b-a^2 \in \Q^2$, $\sqrt{b}(a^2-4b) \in \Q^2$, $\sqrt{b}(4b-a^2) \in \Q^2$, $-(2b-a\sqrt{b}) \in \Q^2$ or $-(2b+a\sqrt{b}) \in \Q^2$, return 8T11 and terminate.
\item Otherwise, return 8T22 and terminate.
\end{enumerate}
\end{enumerate}
\end{enumerate}
\end{theorem}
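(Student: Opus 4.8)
The plan is to verify the algorithm branch by branch. In each branch I would first determine how many of the three octic factors $R_1(x^2), R_2(x^2), R_3(x^2)$ of the linear resolvent $R(x)$ are reducible, and then read off $\Gal(f)$ from this count together with the orbit data of Table \ref{table2}. The backbone is the correspondence established in Section \ref{gremark}: since each irreducible factor of the $R_i(x^2)$ occurs in $R(x)$ with multiplicity one by Proposition \ref{linresfactor}, the number of reducible $R_i(x^2)$ equals the number that split into two length-four orbits, so three reducible forces 8T3, two forces 8T4, one forces 8T2 or 8T9, and none forces 8T11 or 8T22. Thus the proof reduces to counting reducibilities from the square conditions of Proposition \ref{linresfactor} and then resolving the two remaining ambiguities between 8T2 and 8T9 and between 8T11 and 8T22.

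For the reducibility count I would exploit a few elementary identities that collapse the branching. First, $\sqrt{b} \in \Q^2$ makes $R_2(x^2)$ reducible automatically, which is why Case 1 never reaches the 8T11/8T22 regime. Second, the identity $2b + a\sqrt{b} = \sqrt{b}\,(a + 2\sqrt{b})$ shows that when $\sqrt{b} \in \Q^2$, reducibility of $R_1(x^2)$ tracks exactly whether $a + 2\sqrt{b} \in \Q^2$. Third, and most useful, writing $a = u^2 + 2\sqrt{b}$ when $a - 2\sqrt{b} = u^2 \in \Q^2$ gives $2b - a\sqrt{b} = -u^2\sqrt{b}$, which is negative and hence never a square, forcing $R_2(x^2)$ irreducible in branch 2(b); symmetrically $a + 2\sqrt{b} = t^2 \in \Q^2$ gives $2b + a\sqrt{b} = t^2\sqrt{b} \notin \Q^2$, forcing $R_1(x^2)$ irreducible in branch 2(a). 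These identities explain why each branch needs to test only the single further condition that it does, and a direct check then confirms that the reducibility count matches the group returned.

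To separate 8T2 from 8T9 in the branches with a single reducible factor, I would use that the two degree-four factors of that $R_i(x^2)$ are precisely the repeating pair of length-four orbits, so their common Galois group is $C_4$ for 8T2 and $D_4$ for 8T9 and can be computed by Proposition \ref{quarticgal}. The crucial observation is that the factors of $R_1(x^2)$, and those of $R_2(x^2)$ coming from $2b - a\sqrt{b} \in \Q^2$, have the shape $x^4 + C$ with no $x^2$ term; for such a quartic $A^2 - 4B = -4C^2$, so $B(A^2 - 4B) \in \Q^2$ would force $-1 \in \Q^2$, and the group is therefore $E_4$ or $D_4$ but never $C_4$. This is exactly why the corresponding branches return 8T9 outright. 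In the remaining reducible cases the $x^2$-coefficient is nonzero, and evaluating $A^2 - 4B$ and $B(A^2 - 4B)$ on the factors from Proposition \ref{linresfactor} turns the square test of Proposition \ref{quarticgal} into the stated rational conditions; for instance $B(A^2 - 4B) = 4(4b - a^2)$ for the $\sqrt{b} \in \Q^2$ factorization of $R_2(x^2)$, and $B(A^2 - 4B) = -16(2b - a\sqrt{b})$ or $-16(2b + a\sqrt{b})$ for the two factorizations of $R_3(x^2)$, separating $C_4$ from $D_4$.

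Finally, to separate 8T11 from 8T22 in the branch with no reducible factor, I would invoke the splitting-field description $L = \Q(\theta)(\sqrt{-1},\sqrt{\sqrt{b}})$ from Section \ref{gremark}: the group is 8T11 precisely when one of $-1, \sqrt{b}, -\sqrt{b}$ lies in $\Q(\theta)^2$, and 8T22 otherwise. Expanding each membership through Proposition \ref{octicsquares} and discarding the terms excluded by the branch hypotheses $a \pm 2\sqrt{b} \notin \Q^2$ and $2b \pm a\sqrt{b} \notin \Q^2$, the three criteria collapse exactly to the five-fold disjunction $4b - a^2 \in \Q^2$, $\sqrt{b}(a^2 - 4b) \in \Q^2$, $\sqrt{b}(4b - a^2) \in \Q^2$, $-(2b - a\sqrt{b}) \in \Q^2$, or $-(2b + a\sqrt{b}) \in \Q^2$ listed in the algorithm. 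I expect the main obstacle to be bookkeeping rather than ideas: checking that the reducibility counts and the $C_4$-versus-$D_4$ and 8T11-versus-8T22 disjunctions are complete and mutually exclusive across every branch, with the small algebraic identities above doing the real work of pruning the cases.
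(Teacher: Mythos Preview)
Your proposal is correct and follows the paper's approach: count how many of $R_1(x^2),R_2(x^2),R_3(x^2)$ are reducible via Proposition~\ref{linresfactor}, read off the group from Table~\ref{table2}, separate 8T2 from 8T9 by applying Proposition~\ref{quarticgal} to the quartic factors of the single reducible $R_i(x^2)$, and separate 8T11 from 8T22 via Proposition~\ref{octicsquares}. Your identities $2b\pm a\sqrt b=(a\pm 2\sqrt b)\sqrt b$ and the values of $B(A^2-4B)$ for the factorizations of $R_2(x^2)$ and $R_3(x^2)$ match the paper's Cases~1--3 verbatim (note the slip: you wrote $A^2-4B=-4C^2$ where you meant $B(A^2-4B)=-4C^2$).

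One place where you are in fact more careful than the paper: in branch $(2)(c)(\mathrm{ii})$ the single reducible factor is $R_1(x^2)$ or $R_2(x^2)$ via the condition $2b\pm a\sqrt b\in\Q^2$, and by Proposition~\ref{linresfactor} its two quartic pieces have the shape $x^4+C$, \emph{not} the paired form $(x^4+Ax^2+B)(x^4-Ax^2+B)$ on which the paper's Section~\ref{gremark} argument rests. The paper's proof simply writes ``This proves $(2)(c)(i)$ and $(2)(c)(ii)$'' after the reducibility count, without explaining why one reducible factor forces 8T9 rather than 8T2 here. Your observation that $x^4+C$ can never have Galois group $C_4$ (since $B(A^2-4B)=-4C^2$) is exactly what closes this: the factor $x^4$ of $R(x)$ corresponds to the orbit of pairs $\{\alpha,-\alpha\}$, on which $G$ acts as $\Gal(x^4+ax^2+b)=E_4$, so the two pieces of the reducible $R_i(x^2)$ must carry the repeated action $X\in\{C_4,D_4\}$; since neither is $C_4$, we get $X=D_4$ and hence 8T9. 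You should make that last step (identifying the $E_4$-orbit with the $x^4$ factor, or equivalently arguing that $X=C_4$ would require two $C_4$-factors among $F_1,F_2$) explicit in the write-up.
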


\begin{proof}
Recall from Proposition \ref{linresfactor} that 
\begin{itemize} \itemsep0em
\item $R_1(x^2)$ is irreducible if and only if $2b+a\sqrt{b} \notin \Q^2$, 
\item $R_2(x^2)$ is irreducible if and only if $\sqrt{b} \notin \Q^2$ and $2b-a\sqrt{b} \notin \Q^2$,
\item $R_3(x^2)$ is irreducible if and only if $a-2\sqrt{b} \notin \Q^2$ and $a+2\sqrt{b} \notin \Q^2$.
\end{itemize}
\textbf{Case 1:} $\sqrt{b} \in \Q^2$. Then $R_2(x^2)$ is reducible. Observe that $2b+a\sqrt{b}=(a+2\sqrt{b})\sqrt{b} \in \Q^2$ if and only if $a+2\sqrt{b} \in \Q^2$. It follows that if $a+2\sqrt{b} \in \Q^2$, then both $R_1(x^2)$ and $R_3(x^2)$ are reducible, which proves $(1)(a)$. Now if $a+2\sqrt{b} \notin \Q^2$, then $R_1(x^2)$ is irreducible, and $R_3(x^2)$ is reducible if and only if $a-2\sqrt{b} \in \Q^2$, which proves $(1)(b)$. Now if $a-2\sqrt{b} \notin \Q^2$, note by Proposition \ref{linresfactor} that
\begin{eqnarray*}
R_2(x^2) = \left(x^4+4\sqrt{\sqrt{b}}x^2+a+2\sqrt{b}\right)\left(x^4-4\sqrt{\sqrt{b}}x^2+a+2\sqrt{b}\right).
\end{eqnarray*}
By Proposition \ref{quarticgal}, both degree four factors have Galois group $C_4$ if $4b-a^2 \in \Q^2$ and $D_4$ otherwise. This proves $(1)(c)$ and $(1)(d)$.
\par \textbf{Case 2:} $\sqrt{b} \notin \Q^2$ and $a+2\sqrt{b} \in \Q^2$. Then $R_3(x^2)$ is reducible. Observe that $2b+a\sqrt{b}=(a+2\sqrt{b})\sqrt{b} \notin \Q^2$ and hence, $R_1(x^2)$ is irreducible. Also observe that $R_2(x^2)$ is reducible if and only if $2b-a\sqrt{b} \in \Q^2$, which proves $(2)(a)(i)$. Now if $2b-a\sqrt{b} \notin \Q^2$, note by Proposition \ref{linresfactor} that
\begin{eqnarray*}
R_3(x^2) = \left(x^4+2\sqrt{a+2\sqrt{b}}x^2+4\sqrt{b}\right)\left(x^4-2\sqrt{a+2\sqrt{b}}x^2+4\sqrt{b}\right).
\end{eqnarray*}
By Proposition \ref{quarticgal}, both degree four factors have Galois group $C_4$ if $-(2b-a\sqrt{b}) \in \Q^2$ and $D_4$ otherwise. This proves $(2)(a)(ii)$ and $(2)(a)(iii)$.
\par \textbf{Case 3:} $\sqrt{b} \notin \Q^2$ and $a-2\sqrt{b} \in \Q^2$. Then $R_3(x^2)$ is reducible. Observe that $2b-a\sqrt{b}=(a-2\sqrt{b})(-\sqrt{b}) \notin \Q^2$ and hence, $R_2(x^2)$ is irreducible. Also observe that $R_1(x^2)$ is reducible if and only if $2b+a\sqrt{b} \in \Q^2$, which proves $(2)(b)(i)$. Now if $2b+a\sqrt{b} \notin \Q^2$, note by Proposition \ref{linresfactor} that
\begin{eqnarray*}
R_3(x^2) = \left(x^4+2\sqrt{a-2\sqrt{b}}x^2-4\sqrt{b}\right)\left(x^4-2\sqrt{a-2\sqrt{b}}x^2-4\sqrt{b}\right).
\end{eqnarray*}
By Proposition \ref{quarticgal}, both degree four factors have Galois group $C_4$ if $-(2b+a\sqrt{b}) \in \Q^2$ and $D_4$ otherwise. This proves $(2)(b)(ii)$ and $(2)(b)(iii)$.
\par \textbf{Case 4:} $\sqrt{b} \notin \Q^2$, $a+2\sqrt{b} \notin \Q^2$ and $a-2\sqrt{b} \notin \Q^2$. Then $R_3(x^2)$ is irreducible, and $R_1(x^2)$ is reducible if and only if $2b+a\sqrt{b} \in \Q^2$, whereas $R_2(x^2)$ is reducible if and only if $2b-a\sqrt{b} \in \Q^2$. This proves $(2)(c)(i)$ and $(2)(c)(ii)$. Finally, $(2)(c)(iii)$ and $(2)(c)(iv)$ follow from Proposition \ref{octicsquares}.
\end{proof}

Specifically, item $(1)$ in Theorem \ref{deoctics} provides a complete classification for the Galois groups of doubly even octic polynomials where $b$ is a fourth power. In particular, for the case $b=1$ we have the following result.

\begin{corollary} \label{pdeoctics}
Let $f(x)=x^8+ax^4+1 \in \Q[x]$ be irreducible. Then $\Gal(f)$ is
\begin{enumerate}
\item 8T3 if $a+2 \in \Q^2$,
\item 8T4 if $a+2 \not\in \Q^2$ and $a-2 \in \Q^2$,
\item 8T2 if $a+2 \not\in \Q^2$, $a-2 \not\in \Q^2$ and $4-a^2 \in \Q^2$,
\item 8T9 if $a+2 \not\in \Q^2$, $a-2 \not\in \Q^2$ and $4-a^2 \not\in \Q^2$.
\end{enumerate}
\end{corollary}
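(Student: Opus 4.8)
The plan is to derive this corollary directly from the algorithm of Theorem \ref{deoctics} by specializing to $b = 1$. The first step is the observation that sets up the whole argument: when $b = 1$ we take $\sqrt{b} = 1$, and since $1 = 1^2$ we have $\sqrt{b} \in \Q^2$. Hence every irreducible $f(x) = x^8 + ax^4 + 1$ satisfies the hypothesis $b \in \Q^2$ (indeed $b = 1^4$ is a fourth power) and falls into item $(1)$ of Theorem \ref{deoctics}, the branch handling $\sqrt{b} \in \Q^2$. By the remark following that theorem, this is exactly the branch that gives a complete classification, so no residual case analysis via Proposition \ref{octicsquares} is needed.

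Next I would substitute $\sqrt{b} = 1$ into the three arithmetic conditions that control item $(1)$, so that $a + 2\sqrt{b}$ becomes $a + 2$, that $a - 2\sqrt{b}$ becomes $a - 2$, and that $4b - a^2$ becomes $4 - a^2$. After this substitution the nested tests $(1)(a)$ through $(1)(d)$ of Theorem \ref{deoctics} translate line by line into the four clauses of the corollary: $a + 2 \in \Q^2$ forces 8T3; $a + 2 \notin \Q^2$ together with $a - 2 \in \Q^2$ forces 8T4; both $a + 2, a - 2 \notin \Q^2$ together with $4 - a^2 \in \Q^2$ forces 8T2; and the one remaining case forces 8T9.

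Since the reduction is a verbatim specialization, there is no genuine obstacle to overcome here; the entire burden of proof rests on Theorem \ref{deoctics}, and this corollary merely records its $b = 1$ instance. The only points worth stating explicitly are that the irreducibility hypothesis is inherited from the statement and that $b = 1 \in \Q^2$ with $\sqrt{b} = 1 \in \Q^2$, so that the $\sqrt{b} \in \Q^2$ branch of Theorem \ref{deoctics} is genuinely the applicable one and the four guard conditions simplify exactly as claimed.
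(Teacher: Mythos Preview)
Your proposal is correct and matches the paper's approach: the corollary is presented there as an immediate specialization of item $(1)$ of Theorem \ref{deoctics} to the case $b=1$, using exactly the observation that $\sqrt{b}=1\in\Q^2$ so that the three tests reduce to $a+2$, $a-2$, and $4-a^2$ being squares. There is nothing to add.
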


We remark that these simple arithmetic conditions can be used to generate parametric families of doubly even octic polynomials for each of the Galois groups 8T2, 8T3, 8T4 and 8T9. For example, the polynomial $f_t(x)=x^8+(t^2-2)x^4+1$ has Galois group 8T3 for any value of $t \in \Q$ where $f_t(x)$ is irreducible. These values of $t$ can in turn be determined using Proposition \ref{polycompreducible} which characterizes the irreducibility of doubly even octic polynomials.

\section{Palindromic Even Octic Polynomials} \label{resultpe}

\subsection{Factorization patterns of the linear resolvent}

A natural question that follows from Theorem \ref{deoctics} and Corollary \ref{pdeoctics} is whether similar methods can be used to study the Galois groups of palindromic even octic polynomials $f(x)=x^8+ax^6+bx^4+ax^2+1$ (where $a \neq 0$). In a similar fashion, we compute the linear resolvent $R(x)$ of irreducible polynomials $f(x)$ corresponding to $F=x_1+x_2$ using (\ref{resultant}). Computing this expression in Mathematica \cite{wmath} shows that the degree 28 resolvent $R(x)$ is the product of $x^4$ with a degree 16 polynomial $R_{16}(x)$ (see Appendix \ref{aa}) and two irreducible quartic polynomials 
\begin{equation} \label{r2}
\begin{split}
R_1(x)=x^4+(a-4)x^2+(b+2-2a), \\
R_2(x)=x^4+(a+4)x^2+(b+2+2a).
\end{split}
\end{equation}

In view of Proposition \ref{soicher}, we require that the irreducible factors occur with multiplicity one.

\begin{proposition} \label{pmult}
Define $R_1,R_2$ as in {\upshape (\ref{r2})}. Then each of the irreducible factors $R_1(x)$ and $R_2(x)$ of $R(x)$ occurs with multiplicity one.
\end{proposition}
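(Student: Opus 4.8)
The plan is to reduce the statement to a coprimality check and then resolve that check using the symmetry of the root set. Write $R(x)=x^4\,R_{16}(x)\,R_1(x)\,R_2(x)$ as in the displayed factorization. Since $R_1$ and $R_2$ are irreducible quartics over $\Q$, they are separable, have nonzero constant terms (so neither is divisible by $x$), and are distinct (their $x^2$-coefficients $a-4$ and $a+4$ differ), hence coprime to each other and to $x^4$. Consequently the multiplicity of $R_1$ (resp.\ $R_2$) in $R$ equals $1$ plus its multiplicity in $R_{16}$, so the proposition reduces to showing $R_1 \nmid R_{16}$ and $R_2 \nmid R_{16}$, equivalently $\Res(R_1,R_{16}) \neq 0$ and $\Res(R_2,R_{16}) \neq 0$.

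First I would make the roots explicit. Since $f(x)=g(x^2)$ with $g(y)=y^4+ay^3+by^2+ay+1$ palindromic, the substitution $z=y+y^{-1}$ gives $z^2+az+(b-2)=0$, whose roots $z_1,z_2$ satisfy $z_1+z_2=-a$ and $z_1 z_2=b-2$. Writing the eight roots of $f$ as $\pm r,\pm r^{-1},\pm s,\pm s^{-1}$ with $r^2+r^{-2}=z_1$ and $s^2+s^{-2}=z_2$, a direct computation (the discriminant of each $R_i$ equals $a^2-4b+8$) identifies the roots of $R_1$ as $\pm(r+r^{-1}),\pm(s+s^{-1})$ and those of $R_2$ as $\pm(r-r^{-1}),\pm(s-s^{-1})$, while the sixteen roots of $R_{16}$ are exactly the \emph{cross} sums $\varepsilon r^{\pm1}+\delta s^{\pm1}$. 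Thus $R_1$ and $R_2$ collect the \emph{within-block} $2$-sets and $R_{16}$ the cross $2$-sets, under the partition of the roots into the two blocks $\{\pm r,\pm r^{-1}\}$ and $\{\pm s,\pm s^{-1}\}$.

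The key structural input is that this partition is Galois-stable: for $\sigma\in\Gal(f)$ one has $\sigma(-x)=-\sigma(x)$ and $\sigma(x^{-1})=\sigma(x)^{-1}$ (because $f$ is even and palindromic), so $\Gal(f)$ commutes with negation and inversion and therefore permutes the two blocks and preserves the $\pm$-pairs. Hence the within-block and cross $2$-sets are each unions of $\Gal(f)$-orbits, and $\Gal(f)$ cannot carry a root of $R_1$ to a root of $R_2$. Combined with the fact that the four roots of $R_1$ form a single orbit (irreducibility) and are realized bijectively by the four within-block $R_1$-type $2$-sets, this yields a clean dichotomy: either $R_1$ occurs with multiplicity one, or \emph{every} root of $R_1$ is simultaneously realized by a cross $2$-set; and likewise for $R_2$.

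It remains to exclude the second alternative, which is the main obstacle. Suppose a within-block sum coincides with a cross sum, say $r+r^{-1}=\alpha+\beta$ with $\alpha\in\{\pm r,\pm r^{-1}\}$ and $\beta\in\{\pm s,\pm s^{-1}\}$. The choices $\alpha\in\{r,r^{-1}\}$ force $\beta\in\{r^{-1},r\}$, i.e.\ the two blocks share a root; then $g$ has a repeated reciprocal pair and $f$ is reducible, a contradiction. The remaining choices $\alpha\in\{-r,-r^{-1}\}$ force $\beta=2r+r^{-1}$ or $\beta=r+2r^{-1}$ to be a root of $f$; squaring and using $r^2=\gamma_1$ (a root of $g$) turns this into the relation $4\gamma_1+4+\gamma_1^{-1}\in\{\gamma_2,\gamma_2^{-1}\}$ between two roots $\gamma_1,\gamma_2$ of $g$. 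Imposing $g(\gamma_1)=g(\gamma_2)=0$ over-determines $\gamma_1$ and forces $g$ to acquire a rational or repeated root, again making $f$ reducible. I expect this last elimination to be the crux, and I would carry it out as a single resultant computation in Mathematica, checking that the vanishing loci of $\Res(R_1,R_{16})$ and $\Res(R_2,R_{16})$ in the $(a,b)$-plane are contained in the locus where $x^8+ax^6+bx^4+ax^2+1$ is reducible (for instance where $\Disc(g)=0$ or $g$ has a rational root), exactly in the spirit of the routine verification concluding Proposition \ref{linresfactor}.
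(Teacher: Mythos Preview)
Your proposal is correct and reaches the same endpoint as the paper, but by a genuinely different route. The paper argues by brute force: it performs symbolic long division of $R_{16}$ by $R_1$ (and $R_2$) in Mathematica, obtains the explicit remainder $(a^2-4b+8)\bigl(t_1(a)x^2+t_2(a)\bigr)$, and then checks case by case that the vanishing of this remainder forces $f$ to be reducible. There is no root interpretation at all.

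Your approach, by contrast, first explains \emph{why} the factorization $R=x^4 R_1 R_2 R_{16}$ should behave well: you identify the roots of $R_1,R_2$ as the within-block two-sums and those of $R_{16}$ as the cross two-sums under the natural partition $\{\pm r,\pm r^{-1}\}\sqcup\{\pm s,\pm s^{-1}\}$ of the roots of $f$. This makes the coprimality of $R_1,R_2,x^4$ transparent and reduces the question to whether a within-block sum can coincide with a cross sum. Two of your four subcases ($\alpha\in\{r,r^{-1}\}$) are then dispatched \emph{without any computation}, since they force the two blocks to overlap and hence $f$ to have a repeated root. Only the remaining subcases ($\alpha\in\{-r,-r^{-1}\}$) require an elimination, and there you propose essentially the same machine check as the paper (your $\Res(R_1,R_{16})=0$ locus is exactly the vanishing locus of the paper's remainder). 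So what you gain is a structural explanation and a halving of the computational load; what the paper gains is brevity, since it skips the root analysis entirely and lets the symbolic division do all the work.

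One minor remark: your paragraph on Galois stability of the block partition is correct but not needed for the dichotomy you state. The implication ``$R_1$ shares a root with $R_{16}$ $\Rightarrow$ $R_1\mid R_{16}$'' follows already from the irreducibility of $R_1$ over $\Q$, without invoking that $\Gal(f)$ commutes with negation and inversion.
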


\begin{proof}
Suppose to the contrary that $R_1(x)$ does not occur as an irreducible factor of $R(x)$ with multiplicity one. Clearly $R_1(x) \neq R_2(x)$ since their coefficients of $x^2$ are not equal. It follows that $R_1(x)$ is a factor of $R_{16}(x)$. Let
\begin{gather*}
t_1(u)=-4(u-6)(4u^2-16u+78-13b), \\
t_2(u)=32u^3-(263+16b)u^2+(1320+36b)u+40b^2-468b-1080.
\end{gather*} 
We use Mathematica \cite{wmath} to perform symbolic polynomial division on $R_{16}(x)$ by $R_1(x)$, which results in the remainder $(a^2-4b+8) \big(t_1(a)x^2+t_2(a)\big)$. Our assumption requires that the remainder be identically zero, which we now show to be impossible.
\par \textbf{Case 1:} $a^2-4b+8=0$. Then $b=\frac{a^2+8}{4}$ and hence,
\begin{eqnarray*}
f(x)=x^8+ax^6+\frac{1}{4}(a^2+8)x^4+ax^2+1=\left(x^4+\frac{1}{2}ax^2+1\right)^2
\end{eqnarray*}
is reducible, a contradiction.
\par \textbf{Case 2:} $t_1(a)=t_2(a)=0$. The rational solutions for this system of equations are
\begin{eqnarray*}
(a,b) \in \left\{(4,6),\left(6,\frac{51}{5}\right),\left(6,\frac{21}{2}\right),\left(\frac{221}{24},\frac{2989}{144}\right)\right\}.
\end{eqnarray*}
Factoring using a computer algebra system shows that $f(x)$ is reducible for these values of $a$ and $b$, a contradiction.
\par Likewise, suppose to the contrary that $R_2(x)$ does not occur as an irreducible factor of $R(x)$ with multiplicity one. Then $R_2(x)$ is a factor of $R_{16}(x)$, and the remainder when $R_{16}(x)$ is divided by $R_2(x)$ is $(a^2-4b+8) \big(-t_1(-a)x^2+t_2(-a)\big)$. We have shown that $a^2-4b+8 \neq 0$, so our assumption requires that $-t_1(-a)=t_2(-a)=0$. The rational solutions for this system of equations are
\begin{eqnarray*}
(a,b) \in \left\{(-4,6),\left(-6,\frac{51}{5}\right),\left(-6,\frac{21}{2}\right),\left(-\frac{221}{24},\frac{2989}{144}\right)\right\}.
\end{eqnarray*}
However, $f(x)$ is reducible for these values of $a$ and $b$, a contradiction.
\end{proof}

\subsection{Exact possible Galois groups}

We first identify the possible Galois groups of $f(x)=x^4+ax^3+bx^2+ax+1$ by using the classification in \cite[Section 10 (Exercise 7)]{kaplansky}. It can be derived as a corollary of \cite[Section 10 (Theorem 43)]{kaplansky}, so we do not reproduce the proof here.

\begin{proposition} \label{pocticsgal}
Let $f(x)=x^4+ax^3+bx^2+ax+1 \in \Q[x]$ be irreducible. Then $\Gal(f)$ is
\begin{enumerate}
\item $E_4$ if $(b+2)^2-4a^2 \in \Q^2$,
\item $C_4$ if $(b+2)^2-4a^2 \notin \Q^2$ and $(a^2-4b+8)\big((b+2)^2-4a^2\big) \in \Q^2$,
\item $D_4$ if $(b+2)^2-4a^2 \notin \Q^2$ and $(a^2-4b+8)\big((b+2)^2-4a^2\big) \notin \Q^2$.
\end{enumerate}
\end{proposition}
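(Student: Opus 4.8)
The plan is to exploit the reciprocal (palindromic) symmetry of $f$ through the classical substitution $y = x + 1/x$. Dividing $f(x)=0$ by $x^2$ and using $x^2 + x^{-2} = y^2 - 2$ converts $f$ into the resolvent quadratic $g(y) = y^2 + ay + (b-2)$, whose discriminant is $D := a^2 - 4b + 8$. If $f$ is irreducible then $g$ must be irreducible as well, for a rational root $y_0$ of $g$ would yield a root $\alpha$ of $f$ with $\alpha + 1/\alpha = y_0 \in \Q$, forcing $[\Q(\alpha):\Q]\le 2$; in particular $D \notin \Q^2$. Writing $y_1,y_2$ for the roots of $g$, the four roots of $f$ split into the reciprocal pairs $\{\rho,1/\rho\}$ solving $x^2 - y_i x + 1 = 0$ for $i=1,2$, so the splitting field is $K = \Q(\sqrt D)\bigl(\sqrt{y_1^2-4},\,\sqrt{y_2^2-4}\bigr)$.

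First I would pin down the shape of $\Gal(f)$. Since $\Gal(f)$ permutes the roots while preserving the relation $\rho\cdot\rho'=1$, it stabilizes the partition into the two reciprocal pairs, and the stabilizer of such a partition in $S_4$ is dihedral of order eight, so $\Gal(f)\subseteq D_4$. As $f$ is irreducible, $\Gal(f)$ is transitive, and the only transitive subgroups of this $D_4$ are $E_4$, $C_4$ and $D_4$ itself, matching the three cases. To separate them I would use two invariants: the degree $[K:\Q(\sqrt D)]$ and the class of $\Disc(f)$ in $\Q^\times/(\Q^\times)^2$.

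Next I would compute the relevant quantities by elementary symmetric manipulations. Setting $u = y_1^2-4$ and $v = y_2^2-4$, one obtains $uv = (y_1y_2)^2 - 4(y_1^2+y_2^2) + 16 = (b+2)^2 - 4a^2 =: N$ from $y_1+y_2=-a$ and $y_1y_2=b-2$. The nontrivial automorphism of $\Q(\sqrt D)/\Q$ interchanges $y_1,y_2$, hence $u,v$; so $\sqrt u \in \Q(\sqrt D)$ would force $\sqrt v\in\Q(\sqrt D)$ and thus $\alpha\in\Q(\sqrt D)$, impossible as $[\Q(\alpha):\Q(\sqrt D)]=2$. Consequently $[K:\Q(\sqrt D)]=2$ exactly when $\sqrt{uv}=\sqrt N \in \Q(\sqrt D)$, i.e.\ by Lemma \ref{quadsquare} exactly when $N\in\Q^2$ or $DN\in\Q^2$; otherwise $[K:\Q(\sqrt D)]=4$ and $\Gal(f)=D_4$.

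Finally I would compute the discriminant to separate $E_4$ from $C_4$. Labelling the roots $\rho_1,\rho_2=1/\rho_1$ (from $y_1$) and $\rho_3,\rho_4=1/\rho_3$ (from $y_2$), the within-pair differences contribute $u$ and $v$, while the product of the four cross differences reduces, using $\rho_i^2 = y_i\rho_i - 1$ and $\rho_1\rho_2=1$, to $(y_1-y_2)^2$; hence $\Disc(f) = uv\,(y_1-y_2)^4 = N\,D^2$. Since $D^2\in\Q^2$, we get $\Disc(f)\in\Q^2 \iff N\in\Q^2$, and because $E_4\subseteq A_4$ while $C_4,D_4\not\subseteq A_4$, this gives $\Gal(f)=E_4 \iff N\in\Q^2$, proving case (1). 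When $N\notin\Q^2$ the group is $C_4$ or $D_4$, and by the degree computation it is $C_4$ precisely when $DN\in\Q^2$ and $D_4$ otherwise, yielding cases (2) and (3). The main obstacle is bookkeeping the field tower correctly, in particular justifying that a single one of $u,v$ cannot be a square in $\Q(\sqrt D)$ and invoking Lemma \ref{quadsquare} accurately, together with verifying the clean identities $uv=N$ and $\Disc(f)=N\,D^2$; once these are secured, the classification follows immediately.
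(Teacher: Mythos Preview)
Your argument is correct. The substitution $y=x+1/x$ reduces $f$ to the quadratic $g(y)=y^2+ay+(b-2)$ with discriminant $D=a^2-4b+8$, the identities $uv=(b+2)^2-4a^2=N$ and $\Disc(f)=N\,D^2$ check out line by line, and the tower $\Q\subset\Q(\sqrt D)\subset K$ together with the discriminant criterion cleanly separates $E_4$, $C_4$ and $D_4$. One small point: the implication ``$[K:\Q(\sqrt D)]=2 \iff \sqrt{uv}\in\Q(\sqrt D)$'' uses the analogue of Lemma~\ref{quadsquare} over the base field $\Q(\sqrt D)$ rather than over $\Q$; the same two-line proof works verbatim over any field of characteristic $\neq 2$, so this is harmless, but you should say so explicitly since the lemma in the paper is stated only over $\Q$.

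By contrast, the paper does not prove this proposition at all: it simply records it as an exercise in Kaplansky's \emph{Fields and Rings} and remarks that it follows from the general quartic classification (Kaplansky's Theorem~43), which proceeds via the resolvent cubic of an arbitrary quartic. Your route is therefore genuinely different and more intrinsic to the palindromic situation: you never touch the resolvent cubic, and every quantity that appears ($D$, $N$, $DN$) is read off directly from the reciprocal structure of the roots. The payoff is a self-contained derivation that also explains \emph{why} the two invariants $N=(b+2)^2-4a^2$ and $DN$ are the right ones, namely $N=\Disc(f)/D^2$ controls parity while $DN$ controls whether $\sqrt{uv}$ descends to $\Q(\sqrt D)$. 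The cost is only that you must verify the small field-theoretic facts (transitivity inside $D_4$, the impossibility of $\sqrt u\in\Q(\sqrt D)$) by hand, which you do.
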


We now apply the filtering framework used in \cite{awtreyoctic} (see Section \ref{filter}).

\begin{proposition} \label{peocticspossible}
Let $f(x)=x^8+ax^6+bx^4+ax^2+1 \in \Q[x]$ (where $a \neq 0$) be irreducible, and let $g(x)=x^4+ax^3+bx^2+ax+1$ so that $f(x)=g(x^2)$.
\begin{enumerate}
\item If $\Gal(g)$ is $E_4$, then $\Gal(f)$ is 8T2, 8T3, 8T4 or 8T9.
\item If $\Gal(g)$ is $C_4$, then $\Gal(f)$ is 8T2 or 8T10.
\item If $\Gal(g)$ is $D_4$, then $\Gal(f)$ is 8T4, 8T9, 8T10 or 8T18.
\end{enumerate}
Moreover, each of the possibilities above does occur.
\end{proposition}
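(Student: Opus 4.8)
The plan is to run the filtering framework of Section \ref{filter} exactly as in the proof of Proposition \ref{possiblegal}, but to extract extra mileage from the subfields forced by the reciprocal (palindromic) symmetry, which is what makes the candidate list strictly shorter here than in the doubly even case.

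First I would settle the parity. Since $g(x)=x^4+ax^3+bx^2+ax+1$ has constant term $1$ and $f(x)=g(x^2)$ has degree $n=8$, Proposition \ref{polycompdisc} with $k=2$ gives $(-1)^{n/2}\cdot 1 = 1 \in \Q^2$, so $\Disc(f) \in \Q^2$ and $\Gal(f)$ is contained in $A_8$ in every case; this already restricts attention to transitive subgroups of $A_8$. Next I would read off the subfield content of $f$. Writing $\theta$ for a root and $K_f=\Q(\theta)$, the relation $f(x)=g(x^2)$ shows $\theta^2$ is a root of $g$, so $\Q(\theta^2)$ is a degree-four subfield whose normal closure has Galois group $\Gal(g)$; thus $\Gal(g)$ lies in the subfield content. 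The new ingredient is that $f$ is reciprocal, so $1/\theta \in \Q(\theta)$ and $K_f$ contains the two further degree-four subfields $\Q(\theta+1/\theta)$ and $\Q(\theta-1/\theta)$, whose minimal polynomials are precisely the irreducible quartics $R_1$ and $R_2$ of (\ref{r2}): a short computation gives $(\theta\pm 1/\theta)^2=(\theta^2+1/\theta^2)\pm 2$, with $\theta^2+1/\theta^2$ a root of $x^2+ax+(b-2)$. Hence $\Gal(R_1)$ and $\Gal(R_2)$ also appear, and $\Q(\theta^2),\Q(\theta+1/\theta),\Q(\theta-1/\theta)$ are three distinct degree-four subfields of $K_f$ (distinct because $g,R_1,R_2$ are pairwise different irreducible quartics). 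I would also record the order bound: since the roots of $f$ are $\pm\sqrt{\alpha},\pm 1/\sqrt{\alpha},\pm\sqrt{\beta},\pm 1/\sqrt{\beta}$ for roots $\alpha,1/\alpha,\beta,1/\beta$ of $g$, one has $\mathrm{Spl}(f)=\mathrm{Spl}(g)(\sqrt{\alpha},\sqrt{\beta})$, whence $|\Gal(f)| \le 4\,|\Gal(g)|$.

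Third, I would cross-reference these constraints against the LMFDB table of transitive groups of degree eight, separately for $\Gal(g)\in\{E_4,C_4,D_4\}$. The demand that $K_f$ carry three distinct degree-four subfields with the appropriate quartic Galois groups, together with the parity condition and the order bound, eliminates every transitive subgroup of $A_8$ except those listed. This is exactly how the groups that survive the weaker ``contains $\Gal(g)$ and lies in $A_8$'' test of the doubly even case get ruled out here; for instance when $\Gal(g)=E_4$ the order bound removes 8T22, while $Q_8=$ 8T5 admits only a single degree-four subfield and so cannot match the three-subfield structure, and an analogous subfield-content mismatch discards 8T11. This leaves 8T2, 8T3, 8T4, 8T9 for $E_4$; 8T2, 8T10 for $C_4$; and 8T4, 8T9, 8T10, 8T18 for $D_4$. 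For the realizability (``each possibility occurs''), I would exhibit one explicit irreducible palindromic even octic for each case, verifying irreducibility through Propositions \ref{polycompcor} and \ref{quarticreducible} and confirming $\Gal(f)$ either by direct computation or by the resolvent-factorization analysis underlying the later classification (Theorems \ref{peocticse4} and \ref{peocticsc4}).

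I expect the main obstacle to be the third step: turning the three-subfield observation from a necessary condition into a proof that it excludes every unwanted group. The doubly even analysis shows that parity plus a single occurrence of $\Gal(g)$ in the subfield content is too coarse, so the argument must genuinely invoke all three reciprocal subfields together with their quartic Galois groups, and must check the recorded LMFDB subfield content for each surviving candidate rather than relying on group order alone; assembling explicit examples realizing every group/quartic combination is the secondary bookkeeping task.
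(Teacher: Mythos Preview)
Your plan is sound and reaches the same conclusion, but by a genuinely different route. The paper applies the parity and subfield-content filters to obtain the preliminary lists of Table~\ref{table3}, and then eliminates 8T5, 8T11, 8T19, 8T20, 8T22, 8T29 in one stroke: since $R(x)$ has at least three distinct degree-four factors ($x^4$, $R_1$, $R_2$, shown to occur with multiplicity one in Proposition~\ref{pmult}), the group $\Gal(f)$ must have at least three length-four orbits on $2$-sets, and a GAP orbit-length computation (Table~\ref{table4}) shows that each of the six unwanted groups has only one such orbit. You instead interpret $R_1,R_2$ as the minimal polynomials of $\theta\pm 1/\theta$, so that $\Q(\theta)$ carries three distinct degree-four subfields, and you add the splitting-field bound $|\Gal(f)|\le 4\,|\Gal(g)|$. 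Your subfield criterion in fact implies the paper's: an index-four subgroup $H$ of $\Gal(f)$ containing the point stabiliser $G_1$ is exactly the setwise stabiliser of the $H$-orbit $\{1,j\}$ of the base point, which lies in a length-four orbit on $2$-sets, and distinct subfields force distinct such orbits; hence three subfields force three length-four orbits and the same groups are excluded. The order bound is then redundant (every group it kills already fails the orbit test) but is a pleasant independent check. What your framing buys is a structural explanation for why $R_1,R_2$ split off from the resolvent and why they are irreducible; what the paper's buys is a one-line computational verification. One small repair: pairwise distinct irreducible minimal polynomials do not by themselves guarantee distinct subfields; argue instead that, e.g., $\theta+1/\theta\in\Q(\theta-1/\theta)$ would force $1/\theta$ and hence $\theta$ into a degree-four field. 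For realizability the paper simply lists ten explicit polynomials (Table~\ref{table5}) and verifies their Galois groups in Magma.
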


\begin{proof}
We examine all 50 conjugacy classes of transitive subgroups of $S_8$ as given in \cite{lmfdb}. Since the constant term of $f(x)$ is $1 \in \Q^2$, it follows from Proposition \ref{polycompdisc} that $\Gal(f)$ is contained in $A_8$. This allows us to filter a list of possible Galois groups in Table \ref{table3} where for each of the possible $\Gal(g)$, the possible $\Gal(f)$ has $\Gal(g)$ in the subfield content of $f(x)$, and is contained in $A_8$. For each of the possible candidates $G=\Gal(f)$, we determine the list of degrees of irreducible factors of the linear resolvent $R(x)$. This is equivalent to the list of orbit lengths for the action of $G$ on all 2-sets of eight letters. We implement this in GAP \cite{gap} by the command \verb|OrbitLengthsDomain(G,Combinations([1..8],2),OnSets)|, and record the output in Table \ref{table4}. Since the linear resolvent has at least three distinct degree four factors, this eliminates the possibility of 8T5, 8T11, 8T19, 8T20, 8T22 and 8T29. Lastly, the numerical examples in Table \ref{table5} which are searched and verified using Magma \cite{magma} show that each of the remaining possibilities does occur. 
\end{proof}

\begin{table}[h]
\caption{Possible Galois groups $G_8$ of irreducible polynomials $x^8+ax^6+bx^4+ax^2+1 \in \Q[x]$ based on the Galois group $G_4$ of $x^4+ax^3+bx^2+ax+1 \in \Q[x]$. \label{table3}}
{\begin{tabular}{|c|c|} \hline 
$\mathbf{G_4}$ & \textbf{Possible $\mathbf{G_8}$} \\ \hline
$E_4$ & 8T2, 8T3, 8T4, 8T5, 8T9, 8T11, 8T22 \\ \hline
$C_4$ & 8T2, 8T10, 8T20 \\ \hline
$D_4$ & 8T4, 8T9, 8T10, 8T18, 8T19, 8T29 \\ \hline
\end{tabular}}
\end{table}

\begin{table}[h]
\caption{Possible Galois groups $G$ of $x^8+ax^6+bx^4+ax^2+1 \in \Q[x]$ and list of orbit lengths for the action of $G$ on all 2-sets of eight letters. Multiplicative notation is used to denote multiplicity. \label{table4}}
{\begin{tabular}{|c|c|} \hline
$\mathbf{G}$ & \textbf{Orbits} \\ \hline
8T2 & $4^3,8^2$ \\ \hline
8T3 & $4^7$ \\ \hline
8T4 & $4^5,8$ \\ \hline
8T5 & $4,8^3$ \\ \hline
8T9 & $4^3,8^2$ \\ \hline
8T10 & $4^3,16$ \\ \hline
\end{tabular}
\begin{tabular}{|c|c|} \hline
$\mathbf{G}$ & \textbf{Orbits} \\ \hline
8T11 & $4,8^3$ \\ \hline
8T18 & $4^3,16$ \\ \hline
8T19 & $4,8,16$ \\ \hline
8T20 & $4,8,16$ \\ \hline
8T22 & $4,8^3$ \\ \hline
8T29 & $4,8,16$ \\ \hline
\end{tabular}}
\end{table}

\begin{table}[h]
\caption{Numerical examples of irreducible polynomials $x^8+ax^6+bx^4+ax^2+1 \in \Q[x]$ (where $a \neq 0$) with Galois group $H_8$, where $x^4+ax^3+bx^2+ax+1$ has Galois group $H_4$. \label{table5}}
{\begin{tabular}{|c|c|c|} \hline
$\mathbf{H_4}$ & $\mathbf{H_8}$ & \textbf{Polynomial} \\ \hline
$E_4$ & 8T2 & $x^8+24x^6+48x^4+24x^2+1$ \\ \hline
$E_4$ & 8T3 & $x^8-3x^6+8x^4-3x^2+1$ \\ \hline
$E_4$ & 8T4 & $x^8+4x^6+8x^4+4x^2+1$ \\ \hline 
$E_4$ & 8T9 & $x^8+2x^6-7x^4+2x^2+1$ \\ \hline
$C_4$ & 8T2 & $x^8-x^6+x^4-x^2+1$ \\ \hline
$C_4$ & 8T10 & $x^8+x^6-9x^4+x^2+1$ \\ \hline
$D_4$ & 8T4 & $x^8+x^6-3x^4+x^2+1$ \\ \hline
$D_4$ & 8T9 & $x^8+x^6+4x^4+x^2+1$ \\ \hline
$D_4$ & 8T10 & $x^8+4x^6-2x^4+4x^2+1$ \\ \hline
$D_4$ & 8T18 & $x^8+x^6-x^4+x^2+1$ \\ \hline 
\end{tabular}}
\end{table}

\subsection{Classification of Galois groups}

In this subsection, we completely classify the Galois groups of palindromic even octic polynomials for the cases where the Galois group of $g(x)=x^4+ax^3+bx^2+ax+1$ is either $E_4$ or $C_4$. We begin by providing a parameterization for the irreducible polynomials $x^4+ax^3+bx^2+ax+1 \in \Q[x]$ with Galois group $E_4$.

\begin{lemma}
Let $f(x)=x^4+ax^3+bx^2+ax+1 \in \Q[x]$ be an irreducible polynomial. Then $\Gal(f)$ is $E_4$ if and only if $a=pqt$ and $b=(p^2+q^2)t-2$ for some $p,q,t \in \Q$.
\end{lemma}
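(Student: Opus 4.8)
The plan is to convert the statement about $\Gal(f)$ into the single arithmetic condition supplied by Proposition \ref{pocticsgal}, and then prove the resulting equivalence of conditions on $a,b$ by an explicit parameterization. Concretely, Proposition \ref{pocticsgal} tells us that $\Gal(f)$ is $E_4$ exactly when $(b+2)^2 - 4a^2 \in \Q^2$, so the whole lemma reduces to showing that
\[
(b+2)^2 - 4a^2 \in \Q^2 \iff a = pqt \text{ and } b+2 = (p^2+q^2)t \text{ for some } p,q,t \in \Q.
\]

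I would handle the easy direction first. Assuming $a = pqt$ and $b = (p^2+q^2)t - 2$, a direct substitution yields
\[
(b+2)^2 - 4a^2 = (p^2+q^2)^2 t^2 - 4 p^2 q^2 t^2 = t^2 (p^2 - q^2)^2 = \big(t(p^2 - q^2)\big)^2,
\]
which is a square in $\Q$, so $\Gal(f)$ is $E_4$ by Proposition \ref{pocticsgal}.

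For the converse, suppose $(b+2)^2 - 4a^2 = c^2$ with $c \in \Q$, and write $B = b+2$ so that $(B-c)(B+c) = 4a^2$. The key observation is that this factorization forces the ratio $(B+c)/(B-c)$ to be a perfect square whenever $B - c \neq 0$, since $(B+c)/(B-c) = 4a^2/(B-c)^2 = (2a/(B-c))^2$. This leads me to try $q = 1$, $t = (B-c)/2$, and $p = 2a/(B-c)$; substituting $4a^2 = (B-c)(B+c)$ one checks directly that $pqt = a$ and $(p^2 + q^2)t = B = b+2$, giving the required representation with all of $p,q,t$ rational.

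The only obstacle is the degenerate possibility $B - c = 0$, where these formulas break down; this is where I would use that $c$ is determined only up to sign. If both $B - c = 0$ and $B + c = 0$ then $B = c = 0$, forcing $a = 0$ and $b = -2$, so that $f(x) = (x^2-1)^2$ is reducible, contradicting the irreducibility hypothesis. Hence at least one of $B \pm c$ is nonzero, and after replacing $c$ by $-c$ if necessary we may assume $B - c \neq 0$, so the construction above applies. This completes the necessity and hence the equivalence.
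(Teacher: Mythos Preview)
Your proof is correct and follows essentially the same approach as the paper: both reduce via Proposition~\ref{pocticsgal} to the condition $(b+2)^2-4a^2\in\Q^2$, rewrite it as the Pythagorean relation $r^2+(2a)^2=(b+2)^2$, and then parameterize. The paper simply invokes the well-known rational parameterization of Pythagorean triples, whereas you construct an explicit solution (with $q=1$) and handle the degenerate case directly; this is a minor presentational difference rather than a genuinely different route.
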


\begin{proof}
By Proposition \ref{pocticsgal}, $\Gal(f)$ is $E_4$ if and only if $(b+2)^2-4a^2=r^2$ for some $r \in \Q$. This can be rearranged into $r^2+(2a)^2=(b+2)^2$ which is the well-known Diophantine equation on Pythagorean triples, whose rational solutions are of the form $r=(p^2-q^2)t$, $2a=2pqt$ and $b+2=(p^2+q^2)t$ for some $p,q,t \in \Q$.
\end{proof}

Using this parameterization, $R_{16}(x)$ factors as $S_1(x^2)$ and $S_2(x^2)$ where
\begin{equation} \label{s}
\begin{split}
S_1(x)=x^4+2pqt x^3+(8+2p^2 t-4q^2 t+p^2 q^2 t^2)x^2+2pqt(p^2 t-4)x+(p^2 t-4)^2, \\
S_2(x)=x^4+2pqt x^3+(8+2q^2 t-4p^2 t+p^2 q^2 t^2)x^2+2pqt(q^2 t-4)x+(q^2 t-4)^2.
\end{split}
\end{equation}

We remark that $S_1(x)$ and $S_2(x)$ can be obtained from one another by permuting $p$ and $q$ since $a$ and $b$ are symmetrical functions of $p$ and $q$. In particular, $p^2 t$ and $q^2 t$ are invariants, which we can determine by solving the simultaneous equations $(p^2 t)(q^2 t)=a^2$ and $p^2 t+q^2 t=b+2$ to obtain
\begin{equation*}
p^2 t=\frac{1}{2}\left(b+2+\sqrt{(b+2)^2-4a^2}\right) \ \text{ and } \ q^2 t=\frac{1}{2}\left(b+2-\sqrt{(b+2)^2-4a^2}\right).
\end{equation*}

The following lemma on whether certain quantities are squares will be helpful in describing the factorization patterns of the degree eight factors $S_1(x^2)$ and $S_2(x^2)$.

\begin{lemma} \label{plem}
Let $x^4+ax^3+bx^2+ax+1 \in \Q[x]$ (where $a \neq 0$) be an irreducible polynomial with Galois group $E_4$. Using the parameterization above, we have
\begin{enumerate}
\item $a^2-4b+8 \notin \Q^2$.
\item $p^2 t \in \Q^2$ if and only if $q^2 t \in \Q^2$ if and only if $b+2+2a \in \Q^2$.
\item $p^2 t (q^2 t - 4) \notin \Q^2$ and $q^2 t (p^2 t - 4) \notin \Q^2$.
\end{enumerate}
\end{lemma}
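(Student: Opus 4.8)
The plan is to work throughout with the invariants $u := p^2 t$ and $v := q^2 t$. By the formulas displayed just before the lemma these are the two roots of $z^2-(b+2)z+a^2$, so $u+v=b+2$ and $uv=a^2$; moreover $u,v\in\Q$, since $\Gal(g)=E_4$ forces $(b+2)^2-4a^2\in\Q^2$ by Proposition \ref{pocticsgal}, where $g(x)=x^4+ax^3+bx^2+ax+1$. For part (1) I would use the classical palindromic substitution $y=x+x^{-1}$: dividing $g(x)$ by $x^2$ (legitimate, as $0$ is not a root) shows that the roots of $g$ satisfy $y^2+ay+(b-2)=0$, whose discriminant is exactly $a^2-4b+8$. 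If this were in $\Q^2$, the two values $y_1,y_2$ would be rational and $g$ would split as $(x^2-y_1x+1)(x^2-y_2x+1)$ over $\Q$, contradicting irreducibility. Hence $a^2-4b+8\notin\Q^2$; note this step uses only irreducibility, not the value of $\Gal(g)$.

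For part (2), the first equivalence is immediate from $uv=a^2\in\Q^2$: since $a\neq 0$ forces $u,v\neq 0$ and $\Q^2$ is closed under multiplication and division by nonzero squares, $u\in\Q^2$ if and only if $v\in\Q^2$. To bring in $b+2+2a$, I would expand $b+2+2a=(p^2+q^2)t+2pqt=t(p+q)^2$ and observe that $u=p^2t$, so both $u\in\Q^2$ and $b+2+2a\in\Q^2$ reduce to the single condition $t\in\Q^2$, provided $p+q\neq 0$. The only thing to exclude is the degenerate case $p+q=0$: there $b+2+2a=0$, whence $a^2-4b+8=(a+4)^2\in\Q^2$, contradicting part (1). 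Thus $p+q\neq 0$ and the chain of equivalences follows.

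Part (3) is where the real work lies, and I expect it to be the main obstacle. Writing $u(v-4)=a^2-4u$ and $v(u-4)=a^2-4v$, a direct computation gives $[u(v-4)]\,[v(u-4)]=a^2(a^2-4b+8)$, which is a nonsquare by part (1) (a nonzero square times a nonsquare). This already rules out both quantities being squares, but it does \emph{not} exclude exactly one of them being a square, and closing that gap is the crux. To do so I would pass to the splitting field. Let $K=\Q(\sqrt{a^2-4b+8})$ and $L=\Q(\alpha)$ for a root $\alpha$ of $g$; since $\Gal(g)=E_4$, the extension $L/\Q$ is Galois of degree four with $K\subseteq L$ and $[L:K]=2$. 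Setting $\rho_i=\sqrt{y_i^2-4}$ (so the roots of $g$ are $(y_i\pm\rho_i)/2$), one checks the algebraic identity $(y_1^2-4)(y_2^2-4)=(u-v)^2$, giving $\rho_1\rho_2=\pm(u-v)\in\Q$, together with $a^2-4u=(\rho_1\mp\rho_2)^2$ and $a^2-4v=(\rho_1\pm\rho_2)^2$. It therefore suffices to show $\rho_1\pm\rho_2\notin\Q$.

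For this final step, let $\sigma$ generate $\Gal(L/K)$. Then $\sigma(\rho_1)=-\rho_1$, and since $\rho_1\rho_2\in\Q$ is fixed by $\sigma$ we also get $\sigma(\rho_2)=-\rho_2$, so $\sigma(\rho_1\pm\rho_2)=-(\rho_1\pm\rho_2)$. If either $\rho_1+\rho_2$ or $\rho_1-\rho_2$ were in $\Q$ it would be fixed by $\sigma$ and hence zero; but $\rho_1=\pm\rho_2$ forces $y_1^2=y_2^2$, i.e. $y_1=\pm y_2$, which is impossible because $y_1\neq y_2$ (as $a^2-4b+8\neq 0$ by part (1)) and $y_1+y_2=-a\neq 0$. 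Hence both $\rho_1+\rho_2$ and $\rho_1-\rho_2$ are irrational, so neither $u(v-4)$ nor $v(u-4)$ is a square. The decisive feature here is that $\Q(\alpha)$ is already the $E_4$ splitting field, which is exactly what provides the involution $\sigma$ fixing $K$ and simultaneously negating both $\rho_1$ and $\rho_2$; the symmetric product identity alone cannot separate the two quantities, and this Galois-theoretic argument is the clean way around that obstruction.
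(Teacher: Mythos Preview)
Your proof is correct. Parts (1) and (2) follow essentially the same route as the paper: the palindromic substitution $y=x+x^{-1}$ in (1) is just a repackaging of the paper's explicit factorization $g(x)=(x^2+\tfrac{a+r}{2}x+1)(x^2+\tfrac{a-r}{2}x+1)$, and in (2) both arguments pivot on $b+2+2a=(p+q)^2t$ together with $p,q\neq 0$. Your treatment of (2) is in fact slightly more careful, since you explicitly exclude the degenerate case $p+q=0$ via part (1), a point the paper leaves implicit.

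Part (3) is where the approaches diverge. The paper argues computationally: assuming $p^2t(q^2t-4)=r^2$, it rewrites this as $(a^2-2(b+2)-r^2)^2=4\bigl((b+2)^2-4a^2\bigr)$, solves for $b$ as a rational function of $a$ and $r$, and then exhibits an explicit factorization of $g(x)$ into two rational quadratics, contradicting irreducibility. Your argument instead exploits the $E_4$ hypothesis structurally: you realize $a^2-4u$ and $a^2-4v$ as $(\rho_1\pm\rho_2)^2$ with $\rho_i=\alpha_i-\alpha_i^{-1}$, and use the nontrivial element $\sigma\in\Gal(L/K)$ to force $\rho_1\pm\rho_2\notin\Q$. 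The paper's method is more elementary and self-contained (it never invokes $\Gal(g)=E_4$ for this part, only irreducibility), whereas yours explains \emph{why} the obstruction exists in terms of the field structure and avoids the somewhat opaque rational-function manipulation. Both are valid; yours makes the role of the Galois group more transparent, while the paper's yields the concrete would-be factorization.
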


\begin{proof}
\textbf{(1)} Suppose to the contrary that $a^2-4b+8=r^2$ for some $r \in \Q$. Then $b=(a^2+8-r^2)/4$ and hence,
\begin{equation*}
x^4+ax^3+bx^2+ax+1=\left(x^2+\frac{a+r}{2}x+1\right)\left(x^2+\frac{a-r}{2}x+1\right)
\end{equation*}
is reducible, a contradiction.
\par\textbf{(2)} Note that $b+2+2a=p^2 t+q^2 t+2pqt=(p+q)^2 t$ and hence, $b+2+2a \in \Q^2$ if and only if $t \in \Q^2$. Since $pqt=a \neq 0$, it follows that $p \neq 0$ and $q \neq 0$. Therefore, $t \in \Q^2$ if and only if $p^2 t \in \Q^2$. Similarly, we have $t \in \Q^2$ if and only if $q^2 t \in \Q^2$.
\par\textbf{(3)} Suppose to the contrary that $p^2 t (q^2 t - 4)=r^2$ for some $r \in \Q$. Note that
\begin{equation*}
p^2 t(q^2 t-4) = a^2 - 2b - 4 - 2\sqrt{(b+2)^2-4a^2}.
\end{equation*}
It follows that
\begin{equation}
(a^2-2(b+2)-r^2)^2 = 4\big((b+2)^2-4a^2\big). \label{f}
\end{equation}
Note that $a^2 \neq r^2$, otherwise we have $a=0$ from (\ref{f}). Therefore,
\begin{equation*}
b = \frac{a^4-2a^2 r^2+r^4+16a^2}{4(a^2-r^2)} - 2.
\end{equation*}
This implies that
\begin{equation*}
x^4+ax^3+bx^2+ax+1=\left(x^2+\frac{a+r}{2}x+\frac{a+r}{a-r}\right)\left(x^2+\frac{a-r}{2}x+\frac{a-r}{a+r}\right)
\end{equation*}
is reducible, a contradiction. Likewise, suppose to the contrary that $q^2 t (p^2 t - 4)=r^2$ for some $r \in \Q$. Then we also obtain (\ref{f}) and hence, a contradiction.
\end{proof}

We remark that any irreducible factor of $R(x)$ that occurs with multiplicity one has degree at least four. This can be verified by computing the list of orbit lengths of the action of $G$ on all the 2-sets of eight letters for each transitive subgroup $G$ of $S_8$. In particular, any square-free degree four factor of $R(x)$ is irreducible. Therefore, if $S_1(x^2)$ or $S_2(x^2)$ is reducible, then it necessarily factors as the product of two irreducible quartic polynomials. This occurs if and only if either $S_1(x)$ is reducible and factors as a product of two quadratic polynomials, or that $S_1(x)$ is irreducible and satisfies Proposition \ref{polycompeq}. We are now ready to completely characterize the irreducibility of $S_1(x^2)$ and $S_2(x^2)$. 

\begin{proposition} \label{linresfactor2}
Define $S_1,S_2$ as in {\upshape (\ref{s})}. Then
\begin{enumerate}
\item $S_1(x^2)$ is irreducible if and only if $b+2+2a \notin \Q^2$ and $\frac{b-6-\sqrt{(b+2)^2-4a^2}}{2} \notin \Q^2$.
\item $S_2(x^2)$ is irreducible if and only if $b+2+2a \notin \Q^2$ and $\frac{b-6+\sqrt{(b+2)^2-4a^2}}{2} \notin \Q^2$.
\end{enumerate}
Furthermore, we cannot have both $\frac{b-6+\sqrt{(b+2)^2-4a^2}}{2} \in \Q^2$ and $\frac{b-6-\sqrt{(b+2)^2-4a^2}}{2} \in \Q^2$. Therefore, both $S_1(x^2)$ and $S_2(x^2)$ are reducible if and only if $b+2+2a \in \Q^2$. Moreover, each of the irreducible factors in $S_1(x^2)$ and $S_2(x^2)$ occurs as a factor of $R(x)$ with multiplicity one.
\end{proposition}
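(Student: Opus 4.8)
The plan is to show that for $i \in \{1,2\}$ the octic $S_i(x^2)$ is reducible \emph{precisely} when the quartic $S_i(x)$ is, so that the stated radical conditions are exactly the conditions for $S_i(x)$ to be reducible. First I would record two identities in the parameterization $a=pqt$, $b=(p^2+q^2)t-2$. Since $p^2t+q^2t=b+2$ and $p^2t\cdot q^2t=a^2$, the formulas for $p^2t$ and $q^2t$ recorded before the proposition give $\sqrt{(b+2)^2-4a^2}=p^2t-q^2t$, whence
\[
\tfrac12\bigl(b-6-\sqrt{(b+2)^2-4a^2}\bigr)=q^2t-4,\qquad \tfrac12\bigl(b-6+\sqrt{(b+2)^2-4a^2}\bigr)=p^2t-4,
\]
together with $a^2-4b+8=(p^2t-4)(q^2t-4)$. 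By Lemma \ref{plem}(1) the latter is never a square, which immediately yields the \emph{Furthermore}: $p^2t-4$ and $q^2t-4$ cannot both lie in $\Q^2$. Combined with Lemma \ref{plem}(2), which identifies $b+2+2a\in\Q^2$ with $t\in\Q^2$, this also gives the last assertion once the main equivalence is proved, since both $S_1(x^2)$ and $S_2(x^2)$ are then reducible iff $t\in\Q^2$ or ($p^2t-4\in\Q^2$ and $q^2t-4\in\Q^2$), and the second alternative is impossible. As $S_2$ is obtained from $S_1$ by interchanging $p$ and $q$, it suffices to treat $S_1$, and the whole proposition reduces to the claim that $S_1(x^2)$ is reducible if and only if $t\in\Q^2$ or $q^2t-4\in\Q^2$.

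Following the discussion before the proposition, I split according to whether $S_1(x)$ is reducible. For reducibility of $S_1(x)$ itself I would first exhibit the factorization
\[
S_1(x)=\bigl(x^2+(a+2\sqrt{q^2t-4})\,x+(p^2t-4)\bigr)\bigl(x^2+(a-2\sqrt{q^2t-4})\,x+(p^2t-4)\bigr),
\]
valid over $\Q$ as soon as $q^2t-4\in\Q^2$ (including $q^2t=4$, where $S_1(x)$ is a perfect square); this gives one sufficient direction directly. For the converse I would depress $S_1(x)$ by $x\mapsto x-a/2$ and apply Brookfield's criterion (Proposition \ref{quarticreducible}). A Vieta computation shows the three roots of the resolvent cubic are $4(q^2t-4)$, $4q^2t$ and $p^2t(q^2t-4)$: their product is the constant $d^2=16a^2(q^2t-4)^2$ and their sum matches $-2c$, which pins them down. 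By Lemma \ref{plem}(3) the third root $p^2t(q^2t-4)$ is never a nonzero square, so $S_1(x)$ is reducible iff $4(q^2t-4)\in\Q^2$ or $4q^2t\in\Q^2$, i.e.\ iff $q^2t-4\in\Q^2$ or $t\in\Q^2$ (Lemma \ref{plem}(2)); the degenerate $d=0$ branch of Brookfield occurs only for $q^2t=4$, already covered.

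It remains to prove that when $S_1(x)$ is irreducible, i.e.\ $t\notin\Q^2$ and $q^2t-4\notin\Q^2$, the octic $S_1(x^2)$ is \emph{also} irreducible; this is the crux. Writing $w=\sqrt{q^2t-4}\notin\Q$, the factorization above is the splitting of $S_1$ over $\Q(w)$ into the conjugate irreducible quadratics $g_\pm(x)=x^2+(a\pm2w)x+(p^2t-4)$. Fixing a root $\rho$ of $g_+$, Capelli's theorem (as used in Proposition \ref{polycompeq}) says $S_1(x^2)$ is reducible iff $\rho\in\Q(\rho)^2$, where $\Q(\rho)=\Q(w,\sqrt{D_+})$ and $D_+$ is the discriminant of $g_+$. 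Solving $\sqrt{\rho}=\alpha+\beta\sqrt{D_+}$ over $\Q(w)$ (the standard square criterion in a quadratic extension), the norm condition forces $p^2t-4\in\Q(w)^2$; by Lemma \ref{quadsquare} together with $(p^2t-4)(q^2t-4)\notin\Q^2$, this means $p^2t-4=c^2$ for some $c\in\Q$. The remaining condition then reduces to $\mu_0^2-4(q^2t-4)\in\Q^2$ with $\mu_0=-a\pm2c$, and the identity
\[
\mu_0^2-4(q^2t-4)=t\,(qc\mp2p)^2
\]
shows this holds iff $t\in\Q^2$ (the factor $qc\mp2p$ cannot vanish, else $q^2t-4\in\Q^2$). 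Since $t\notin\Q^2$, we get $\rho\notin\Q(\rho)^2$, so $S_1(x^2)$ is irreducible. The main obstacle is verifying this displayed identity and justifying the quadratic-extension square criterion; alternatively one may invoke Proposition \ref{polycompcor} and show the resulting quartic (\ref{ll}) in $\ell$ has a rational root only when $t\in\Q^2$, but that route is more computational. Finally, for the multiplicity statement I would argue as in the doubly even case: checking orbit lengths on $2$-sets for each transitive $G\le S_8$ shows any multiplicity-one irreducible factor of $R(x)$ has degree at least four, so every square-free quartic factor is automatically irreducible of multiplicity one; it then remains to see that the (at most two) irreducible factors from each of $S_1(x^2)$ and $S_2(x^2)$ are pairwise distinct and distinct from $R_1(x),R_2(x)$, any coincidence being a closed condition that, as in Proposition \ref{pmult}, forces $f(x)$ to be reducible, a contradiction.
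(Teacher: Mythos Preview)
Your plan is correct, and for the reducibility analysis of $S_1(x)$ itself, as well as for the ``Furthermore'' and multiplicity-one assertions, you are doing essentially what the paper does (depress and apply Proposition~\ref{quarticreducible}; use $(p^2t-4)(q^2t-4)=a^2-4b+8\notin\Q^2$; check that coincidences among the explicit quartic factors force $f$ reducible).

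The genuine divergence is in the crux step, showing that $S_1(x^2)$ is irreducible whenever $S_1(x)$ is. The paper stays over~$\Q$ and invokes Proposition~\ref{polycompcor}: with $d=(p^2t-4)^2$ one takes $n=\pm(p^2t-4)$ and solves the quartic~\eqref{ll} in $\ell$ explicitly. For $n=p^2t-4$ the putative rational root is massaged until a factor $\sqrt{q^2t(p^2t-4)}$ is isolated, contradicting Lemma~\ref{plem}(3); for $n=-(p^2t-4)$ the roots are $\pm4\pm\sqrt{a^2-4b+8}$, contradicting Lemma~\ref{plem}(1). Your route instead passes to the quadratic extension $\Q(w)$ with $w=\sqrt{q^2t-4}$, factors $S_1=g_+g_-$ there, and reduces via Capelli to whether a root $\rho$ of $g_+$ is a square in $\Q(\rho)=\Q(w,\sqrt{D_+})$. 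The norm condition forces $p^2t-4\in\Q(w)^2$, hence (by Lemma~\ref{quadsquare} and Lemma~\ref{plem}(1)) $p^2t-4=c^2\in\Q^2$; the trace condition then forces $\mu_0-2w\in\Q(w)^2$, whose $\Q(w)/\Q$ norm $\mu_0^2-4(q^2t-4)$ must lie in $\Q^2$, and your identity $\mu_0^2-4(q^2t-4)=t(qc\mp2p)^2$ finishes it. This is cleaner and more conceptual than the paper's explicit solution of~\eqref{ll}: the single algebraic identity replaces a page of manipulation, and the argument transparently explains \emph{why} $t\in\Q^2$ is the obstruction. What the paper's approach buys is that it never leaves $\Q$ and reuses Proposition~\ref{polycompcor} verbatim, so no new ``square in a quadratic extension'' lemma is needed beyond what was already set up. Your parenthetical about $q^2t=4$ is harmless but vacuous, since Lemma~\ref{plem}(3) already excludes it.
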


\begin{proof}
Suppose that $S_1(x^2)$ is reducible. We first consider the case where $S_1(x)$ is reducible and factors as a product of two quadratic polynomials. It follows that $S_1(x-\frac{pqt}{2})$ is a quartic polynomial with no cubic term, and it also factors as a product of two quadratic polynomials. Then by Proposition \ref{quarticreducible}, we have at least one of $q^2 t$, $q^2 t - 4$ or $p^2 t (q^2 t - 4)$ is in $\Q^2 \setminus \{0\}$, or that $4pqt(q^2 t - 4)=0$ and $8(q^2 t - 4) (-2 p^2 t + 2 q^2 t + p^2 q^2 t^2) \in \Q^2$. Since $pqt=a \neq 0$, it follows that the last condition is achieved if and only if $q^2 t - 4 = 0$, that is, $p^2 t(q^2 t-4)=0 \in \Q^2$. By Lemma \ref{plem}, we have $p^2 t (q^2 t - 4) \notin \Q^2$. Therefore, we conclude that either $q^2 t \in \Q^2$ (equivalently, $b+2+2a \in \Q^2$), or $q^2 t - 4 \in \Q^2$ (equivalently, $\frac{b-6-\sqrt{(b+2)^2-4a^2}}{2} \in \Q^2$).
\par We now consider the case where $S_1(x)$ is irreducible. By Proposition \ref{polycompcor}, we have $n=\pm(p^2 t - 4)$. If $n=p^2 t - 4$, then the equation (\ref{ll}) has a rational solution of the form
\begin{equation*}
r=\pm \sqrt{q^2 t(p^2 t - 4)} \pm 2\sqrt{2}\sqrt{p^2 t -2 \mp \frac{pqt}{q^2 t}\sqrt{q^2 t(p^2 t - 4)}}.
\end{equation*}
This implies that
\begin{eqnarray*}
&& (r \mp \sqrt{q^2 t(p^2 t - 4)})^2 = \left( \pm 2\sqrt{2}\sqrt{p^2 t -2 \mp \frac{pqt}{q^2 t}\sqrt{q^2 t(p^2 t - 4)}} \right)^2 \\
&\implies& r^2 + q^2 t(p^2 t - 4) \mp 2r\sqrt{q^2 t(p^2 t - 4)} = 8 \left(p^2 t - 2 \mp \frac{pqt}{q^2 t}\sqrt{q^2 t(p^2 t - 4)}\right) \\
&\implies& r^2 + q^2 t(p^2 t - 4) + 16 - 8 p^2 t = \pm 2r\sqrt{q^2 t(p^2 t - 4)} \mp \frac{8pqt}{q^2 t}\sqrt{q^2 t(p^2 t - 4)} \\
&\implies& r^2 + q^2 t(p^2 t - 4) + 16 - 8 p^2 t = \pm \left( 2r - \frac{8pqt}{q^2 t} \right) \sqrt{q^2 t(p^2 t - 4)}.
\end{eqnarray*}
Now if $r=\frac{4pqt}{q^2 t}$, then
\begin{eqnarray*}
0 = r^2 + q^2 t(p^2 t - 4) + 16 - 8p^2 t = \frac{(q^2 t - 4)(p^2 q^2 t^2 - 4p^2 t - 4q^2 t)}{q^2 t}
\end{eqnarray*}
and so $q^2 t - 4 = 0$ or $p^2 q^2 t^2 - 4p^2 t - 4q^2 t=0$. The first condition implies that $p^2 t(q^2 t - 4)=0 \in \Q^2$, contradicting Lemma \ref{plem}. The second condition implies that $a^2-4b-8=0$, and so $a^2-4b+8=16 \in \Q^2$, contradicting Lemma \ref{plem}. It follows that $2r-\frac{8pqt}{q^2 t} \neq 0$ and hence, $\sqrt{q^2 t(p^2 t - 4)} \in \Q$, again contradicting Lemma \ref{plem}. If $n=-(p^2 t-4)$, then the equation (\ref{ll}) has solutions
\begin{equation*}
r=\pm 4 \pm \sqrt{a^2-4b+8} \notin \Q
\end{equation*}
since $a^2-4b+8 \notin \Q^2$ by Lemma \ref{plem}. Therefore, $S_1(x^2)$ is irreducible if and only if $S_1(x)$ is irreducible, and this proves $(1)$. A similar argument proves $(2)$. 
\par Suppose to the contrary that $\frac{b-6+\sqrt{(b+2)^2-4a^2}}{2} \in \Q^2$ and $\frac{b-6-\sqrt{(b+2)^2-4a^2}}{2} \in \Q^2$. Then their product is $a^2-4b+8 \in \Q^2$, contradicting Lemma \ref{plem}. Lastly, we know from Proposition \ref{pmult} that each irreducible factor in $S_1(x^2)$ and $S_2(x^2)$ is not identical to $R_1(x)$ and $R_2(x)$, and note the following:
\begin{itemize}
\item If $b+2+2a \in \Q^2$, then $S_1(x^2)=S_{11}(x)S_{12}(x)$ and $S_2(x^2)=S_{21}(x)S_{22}(x)$ where
\begin{equation*}
\begin{split}
S_{11}(x) = x^4 + (pqt+2\sqrt{q^2 t})x^2 + (2+\sgn(pqt)\sqrt{p^2 t})^2, \\
S_{12}(x) = x^4 + (pqt-2\sqrt{q^2 t})x^2 + (2-\sgn(pqt)\sqrt{p^2 t})^2, \\
S_{21}(x) = x^4 + (pqt+2\sqrt{p^2 t})x^2 + (2+\sgn(pqt)\sqrt{q^2 t})^2, \\
S_{22}(x) = x^4 + (pqt-2\sqrt{p^2 t})x^2 + (2-\sgn(pqt)\sqrt{q^2 t})^2. 
\end{split}
\end{equation*}
\item If $\frac{b-6+\sqrt{(b+2)^2-4a^2}}{2} \in \Q^2$, then $S_1(x^2)$ is irreducible and $S_2(x^2)=S_{21}(x)S_{22}(x)$ where
\begin{equation*}
\begin{split}
S_{21}(x) = x^4 + (pqt+2\sqrt{p^2 t-4})x^2 + (q^2 t-4), \\
S_{22}(x) = x^4 + (pqt-2\sqrt{p^2 t-4})x^2+ (q^2 t-4).
\end{split}
\end{equation*}
\item If $\frac{b-6-\sqrt{(b+2)^2-4a^2}}{2} \in \Q^2$, then $S_2(x^2)$ is irreducible and $S_1(x^2)=S_{11}(x)S_{12}(x)$ where
\begin{equation*}
\begin{split}
S_{11}(x) = x^4 + (pqt+2\sqrt{q^2 t-4})x^2 + (p^2 t-4), \\
S_{12}(x) = x^4 + (pqt-2\sqrt{q^2 t-4})x^2+ (p^2 t-4).
\end{split}
\end{equation*}
\end{itemize}
It is routine to verify that any two such irreducible factors are identical if and only if $p^2 t=q^2 t$, $p^2 t=0$, $q^2 t=0$, $p^2 t -4=0$ or $q^2 t -4=0$. The first possibility implies that $(b+2)^2-4a^2=0$, that is, $b=-2\pm 2a$ and hence,
\begin{equation*}
x^8+ax^6+bx^4+ax^2+1=(x^2\pm1)^2\left(x^4+(a\mp2)x^2+1\right),
\end{equation*}
a contradiction. The remaining possibilities imply that $p^2 t(q^2 t-4)=0 \in \Q^2$ or $q^2 t(p^2 t -4)=0 \in \Q^2$, contradicting Lemma \ref{plem}. This completes the proof of the proposition.
\end{proof}

We are now ready to describe an algorithm similar to Theorem \ref{deoctics} to completely classify the Galois groups of palindromic even octic polynomials $f(x)=x^8+ax^6+bx^4+ax^2+1$ (where $a \neq 0$) for the case where the Galois group of $x^4+ax^3+bx^2+ax+1$ is $E_4$ using the factorization patterns of the two degree eight factors $S_1(x^2)$ and $S_2(x^2)$. In particular, it follows from Table \ref{table4} that $\Gal(f)$ is 8T3 if both $S_1(x^2)$ and $S_2(x^2)$ are reducible, $\Gal(f)$ is 8T4 if exactly one of them is reducible, and $\Gal(f)$ is either 8T2 or 8T9 if none of them are reducible. To distinguish between 8T2 and 8T9, we first determine the three length four orbits for the action of $G \in \{\text{8T2},\text{8T9}\}$ on the 2-sets of eight letters. For each of these orbits $O$, we determine the image of the permutation representation of $G$ acting on $O$. These can be implemented in GAP \cite{gap} using the commands \verb|Orbits(G,Combinations([1..8],2),OnSets)| and \verb|Action(G,O,OnSets)|. For 8T2, the three orbits correspond to $E_4,C_4,C_4$ whereas for 8T9, the three orbits correspond to $E_4,D_4,D_4$. For each case, we will show that  both the degree four factors $R_1(x)$ and $R_2(x)$ cannot have Galois group $E_4$ and hence, must have the same Galois group (either $C_4$ or $D_4$). Since the degree four factors occur in $R(x)$ with multiplicity one by Proposition \ref{pmult}, it follows from Proposition \ref{soicher} that the Galois group of these factors corresponds to the repeating output. Therefore, we can distinguish between 8T2 and 8T9 by using Proposition \ref{quarticgal} to compute the Galois groups of $R_1(x)$ and $R_2(x)$.

\begin{theorem} \label{peocticse4} 
Let $f(x)=x^8+ax^6+bx^4+ax^2+1 \in \Q[x]$ (where $a \neq 0$) be an irreducible polynomial with $(b+2)^2-4a^2 \in \Q^2$. Then the following algorithm returns $\Gal(f)$.
\begin{enumerate}
\item If $b+2+2a \in \Q^2$, return 8T3 and terminate.
\item Else if 
\begin{equation*}
\frac{b-6+\sqrt{(b+2)^2-4a^2}}{2} \in \Q^2 \ \text{ or } \ \frac{b-6-\sqrt{(b+2)^2-4a^2}}{2} \in \Q^2, 
\end{equation*}
return 8T4 and terminate.
\item Else if $(a^2-4b+8)(b+2+2a) \in \Q^2$, return 8T2 and terminate.
\item Otherwise, return 8T9 and terminate.
\end{enumerate}
\end{theorem}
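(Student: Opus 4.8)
The plan is to read off $\Gal(f)$ from the factorization type of the linear resolvent $R(x)$, in direct analogy with the proof of Theorem \ref{deoctics}. Since $(b+2)^2-4a^2 \in \Q^2$, Proposition \ref{pocticsgal} gives $\Gal(g)=E_4$, so Proposition \ref{peocticspossible} already tells us that $\Gal(f)$ is 8T2, 8T3, 8T4 or 8T9. Under the $E_4$-parameterization the resolvent is the product of $x^4$, the two irreducible quartics $R_1(x),R_2(x)$ of (\ref{r2}), and the two degree eight polynomials $S_1(x^2),S_2(x^2)$. By Proposition \ref{pmult} the factors $R_1,R_2$ occur with multiplicity one, and by Proposition \ref{linresfactor2} so do all irreducible factors of $S_1(x^2)$ and $S_2(x^2)$; moreover any square-free degree four factor of $R(x)$ is automatically irreducible. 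Thus the three ``base'' length four orbits (the antipodal orbit giving the factor $x^4$, together with $R_1$ and $R_2$) are always present, and the orbit-length profile of $R(x)$ is governed entirely by how many of $S_1(x^2),S_2(x^2)$ are reducible.

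First I would match this count against Table \ref{table4}. If both $S_i(x^2)$ are reducible the profile is $4^7$, forcing 8T3; if exactly one is reducible it is $4^5,8$, forcing 8T4; and if neither is reducible it is $4^3,8^2$, leaving $\Gal(f)$ as 8T2 or 8T9. Translating these three cases via Proposition \ref{linresfactor2} is then routine: both $S_i(x^2)$ are reducible precisely when $b+2+2a \in \Q^2$, which yields item (1); and, given $b+2+2a \notin \Q^2$, exactly one of them is reducible precisely when one of $\frac{b-6\pm\sqrt{(b+2)^2-4a^2}}{2}$ lies in $\Q^2$, using the fact from Proposition \ref{linresfactor2} that these two quantities cannot both be squares. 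This proves item (2).

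It remains to separate 8T2 from 8T9 in the branch where $b+2+2a \notin \Q^2$ and neither $\frac{b-6\pm\sqrt{(b+2)^2-4a^2}}{2}$ is a square. Here I would invoke Proposition \ref{soicher}: the Galois group of each multiplicity-one factor of $R(x)$ is the image of the permutation representation of $\Gal(f)$ on the corresponding $2$-set orbit. From the orbit data, the three length four orbits realize $E_4,C_4,C_4$ for 8T2 and $E_4,D_4,D_4$ for 8T9. The $E_4$ orbit is the antipodal orbit consisting of the pairs $\{\theta_i,-\theta_i\}$, since each such pair yields $\alpha_i+\alpha_j=0$ and the induced action is exactly the action on the roots of $g$, hence $\Gal(g)=E_4$; this orbit therefore corresponds to the degenerate factor $x^4$ rather than to $R_1$ or $R_2$. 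Consequently $R_1$ and $R_2$ carry the repeated group, which is $C_4$ for 8T2 and $D_4$ for 8T9; in particular neither can be $E_4$, as only one length four orbit realizes $E_4$. It then suffices to compute $\Gal(R_2)$ by Proposition \ref{quarticgal}. Writing $R_2(x)=x^4+(a+4)x^2+(b+2+2a)$, the relevant quantity is $(a+4)^2-4(b+2+2a)=a^2-4b+8$ (the same value obtained from $R_1$), and since $b+2+2a \notin \Q^2$ we get $\Gal(R_2)=C_4$ exactly when $(b+2+2a)(a^2-4b+8) \in \Q^2$, giving 8T2 in item (3), and $\Gal(R_2)=D_4$ otherwise, giving 8T9 in item (4).

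The main obstacle is the identification underlying the 8T2/8T9 split: one must be certain that the unique $E_4$-orbit is precisely the antipodal orbit collapsing into $x^4$, so that the genuine square-free quartics $R_1,R_2$ inherit the repeated $C_4$ or $D_4$ label and can be analyzed by Proposition \ref{quarticgal}. Once this is secured, the claim that $R_1$ and $R_2$ share a common Galois group is confirmed both structurally, since the orbit profile forces $C_4,C_4$ or $D_4,D_4$, and computationally, since both quartics produce the same discriminant quantity $a^2-4b+8$; the arithmetic criteria of items (3) and (4) then follow immediately.
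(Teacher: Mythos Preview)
Your proof is correct and follows essentially the same route as the paper: read off 8T3 and 8T4 from the number of reducible $S_i(x^2)$ via Proposition \ref{linresfactor2}, then separate 8T2 from 8T9 by computing $\Gal(R_2)$ with Proposition \ref{quarticgal}. The only variation is in justifying that neither $R_i$ can have Galois group $E_4$: you pin the unique $E_4$ orbit to the antipodal orbit (whose induced action is $\Gal(g)=E_4$), whereas the paper instead checks directly that $b+2-2a=\dfrac{(b+2)^2-4a^2}{b+2+2a}\notin\Q^2$ once $b+2+2a\notin\Q^2$, so that the constant terms of $R_1$ and $R_2$ are both nonsquares.
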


\begin{proof}
It follows from Proposition \ref{linresfactor2} that both $S_1(x^2)$ and $S_2(x^2)$ are reducible if and only if $b+2+2a \in \Q^2$, and exactly one of them is reducible if and only if either $\frac{b-6+\sqrt{(b+2)^2-4a^2}}{2} \in \Q^2$ or $\frac{b-6-\sqrt{(b+2)^2-4a^2}}{2} \in \Q^2$. This proves $(1)$ and $(2)$. Now if $b+2+2a \notin \Q^2$, then $b+2-2a=\frac{(b+2)^2-4a^2}{b+2+2a} \notin \Q^2$, and so both $\Gal(R_1)$ and $\Gal(R_2)$ are not $E_4$, and the Galois group of $R_2(x)$ is $C_4$ if $(a^2-4b+8)(b+2+2a) \in \Q^2$, and $D_4$ otherwise. This proves $(3)$ and $(4)$.
\end{proof}

For our final result, we consider the case where the Galois group of $x^4+ax^3+bx^2+ax+1$ is $C_4$.

\begin{theorem} \label{peocticsc4}
Let $f(x)=x^8+ax^6+bx^4+ax^2+1 \in \Q[x]$ (where $a \neq 0$) be an irreducible polynomial with $(b+2)^2-4a^2 \notin \Q^2$ and $(a^2-4b+8)\big((b+2)^2-4a^2) \in \Q^2$. Then $\Gal(f)$ is
\begin{enumerate}
\item 8T2 if $b+2-2a \in \Q^2$ or $b+2+2a \in \Q^2$,
\item 8T10 otherwise.
\end{enumerate}
\end{theorem}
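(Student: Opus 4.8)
The plan is to distinguish the two candidates 8T2 and 8T10 by examining the Galois groups of the two irreducible quartic factors $R_1(x)$ and $R_2(x)$ of the linear resolvent $R(x)$, in direct analogy with the treatment of the $E_4$ case. First I would observe that the hypotheses $(b+2)^2-4a^2 \notin \Q^2$ and $(a^2-4b+8)\big((b+2)^2-4a^2\big) \in \Q^2$ are exactly the conditions in Proposition \ref{pocticsgal} for $\Gal(g)$ to be $C_4$, where $g(x)=x^4+ax^3+bx^2+ax+1$. Hence by Proposition \ref{peocticspossible}(2) we have $\Gal(f) \in \{\text{8T2},\text{8T10}\}$, and the whole problem reduces to deciding between these two.

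Next, by Proposition \ref{pmult} both $R_1(x)$ and $R_2(x)$ occur in $R(x)$ with multiplicity one, so Proposition \ref{soicher} identifies $\Gal(R_1)$ and $\Gal(R_2)$ with the images of the permutation representations of $\Gal(f)$ on the two length-four orbits of the action on $2$-sets that correspond to these factors. Since $R_1(x)=x^4+(a-4)x^2+(b+2-2a)$ and $R_2(x)=x^4+(a+4)x^2+(b+2+2a)$ are irreducible even quartics, Proposition \ref{quarticgal} gives \emph{unconditionally} that $\Gal(R_1)$ is $E_4$ if and only if $b+2-2a \in \Q^2$, and $\Gal(R_2)$ is $E_4$ if and only if $b+2+2a \in \Q^2$. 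At most one of these can hold, since their product $(b+2)^2-4a^2$ is not a square. Thus the theorem is equivalent to the assertion that $\Gal(f)$ is 8T2 precisely when one of $\Gal(R_1),\Gal(R_2)$ is $E_4$.

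To close the argument I would run the group-theoretic computation used in Section \ref{gremark} and preceding Theorem \ref{peocticse4}: using the GAP commands \verb|Orbits| and \verb|Action| on the $2$-sets of eight letters, compute the image of each group on each of its three length-four orbits. I expect the profiles to be $\{C_4,C_4,E_4\}$ for 8T2 and $\{C_4,D_4,D_4\}$ for 8T10; in particular $E_4$ arises as a length-four orbit image for 8T2 but for neither orbit of 8T10. I would then identify the distinguished length-four orbit consisting of the zero-sum $2$-sets $\{\theta,-\theta\}$, which yields the factor $x^4$: this orbit is in bijection with the roots of $g$, so its image is $\Gal(g)=C_4$, and it is therefore never the $E_4$ orbit. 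Consequently, for 8T2 the unique $E_4$ orbit must be one of those giving $R_1$ or $R_2$, whereas 8T10 has no $E_4$ orbit at all.

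Assembling these facts yields the equivalence directly. If $\Gal(f)$ is 8T10, then none of its length-four orbit images is $E_4$, so neither $b+2-2a$ nor $b+2+2a$ is a square; if $\Gal(f)$ is 8T2, then its unique $E_4$ orbit image is realized by $R_1$ or $R_2$ (not by the zero-sum orbit), so exactly one of $b+2-2a,\,b+2+2a$ is a square. As $\Gal(f)$ is one of these two groups, this gives the stated dichotomy. I expect the main obstacle to be precisely this group-theoretic step: verifying that $E_4$ occurs as a length-four orbit image for 8T2 but not for 8T10, and pinning down that the zero-sum orbit, whose image is $\Gal(g)=C_4$, is not the one responsible for the $E_4$, so that the $E_4$ is forced onto $R_1$ or $R_2$ rather than onto the $x^4$ factor. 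As a consistency check one may also verify purely algebraically, using the $C_4$ hypothesis, that when neither condition holds both $\Gal(R_1)$ and $\Gal(R_2)$ equal $D_4$ (which, being non-abelian, cannot arise from the abelian group 8T2), and that when one condition holds the pair is $\{E_4,C_4\}$.
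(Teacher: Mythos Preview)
Your proposal is correct and follows essentially the same approach as the paper: reduce to $\{\text{8T2},\text{8T10}\}$ via Proposition~\ref{peocticspossible}, compute the length-four orbit profiles $\{E_4,C_4,C_4\}$ and $\{C_4,D_4,D_4\}$ via GAP, and then read off the answer from $\Gal(R_1),\Gal(R_2)$ using Proposition~\ref{quarticgal}. The only notable difference is in how you pin down which of the three length-four orbits carry $R_1$ and $R_2$: you argue that the zero-sum orbit $\{\theta,-\theta\}$ is in $\Gal(f)$-equivariant bijection with the roots of $g$, hence has image $\Gal(g)=C_4$, so the remaining two length-four orbits must be those of $R_1$ and $R_2$; the paper instead shows algebraically that $\Gal(R_1)$ and $\Gal(R_2)$ cannot both be $C_4$ (their product condition would force $(b+2)^2-4a^2\in\Q^2$), which in the 8T2 profile forces one of them to be the $E_4$. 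Your route is a bit more conceptual, the paper's avoids having to say anything about the non-squarefree factor $x^4$; both are short and either suffices.
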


\begin{proof} 
Let $g(x)=x^4+ax^3+bx^2+ax+1$ so that $f(x)=g(x^2)$. In view of Proposition \ref{pocticsgal}, we have $\Gal(g)$ is $C_4$, and it follows from Proposition \ref{peocticspossible} that $\Gal(f)$ is either 8T2 or 8T10. We first determine the three length four orbits for $G \in \{\text{8T2},\text{8T10}\}$ on the 2-sets of eight letters. For each of these orbits $O$, we determine the image of the permutation representation of $G$ acting on $O$. These can be implemented in GAP \cite{gap} using the commands \verb|Orbits(G,Combinations([1..8],2),OnSets)| and \verb|Action(G,O,OnSets)|. For 8T2, the three orbits correspond to $E_4,C_4,C_4$ whereas for 8T10, the three orbits correspond to $C_4,D_4,D_4$. Suppose to the contrary that both $\Gal(R_1)$ and $\Gal(R_2)$ are $C_4$. Then by Proposition \ref{quarticgal}, we have $(a^2-4b+8)(b+2-2a) \in \Q^2$ and $(a^2-4b+8)(b+2+2a) \in \Q^2$, respectively. It follows that their product $(a^2-4b+8)^2 \big((b+2)^2-4a^2\big) \in \Q^2$ and hence, $(b+2)^2-4a^2 \in \Q^2$, a contradiction. As such, $\Gal(f)$ is 8T2 if and only if $\{\Gal(R_1),\Gal(R_2)\}=\{E_4,C_4\}$. If $\Gal(f)$ is 8T10, then neither $\Gal(R_1)$ nor $\Gal(R_2)$ is $E_4$. By Proposition \ref{quarticgal}, $\Gal(R_1)$ is $E_4$ if and only if $b+2-2a \in \Q^2$, whereas $\Gal(R_2)$ is $E_4$ if and only if $b+2+2a \in \Q^2$. This completes the proof of the theorem.
\end{proof}

\appendix

\section{Complete Expression of $R_{16}(x)$} \label{aa}

$R_{16}(x)=x^{16}+a_{14}x^{14}+a_{12}x^{12}+a_{10}x^{10}+a_8x^8+a_6x^6+a_4x^4+a_2x^2+a_0$ where

\begin{eqnarray*}
a_{14}&=&4a, \\
a_{12}&=&2(6+3a^2-b), \\
a_{10}&=&2a(6+2a^2-b), \\
a_8&=&20+22a^2+a^4-52b+2a^2b-7b^2, \\
a_6&=&2a(-28+4a^2-4b+a^2b-3b^2), \\
a_4&=&192-32a^2+2a^4+16b-6a^2b+16b^2+a^2b^2-4b^3, \\
a_2&=&2a(8+a^2-4b)(-6+b), \\
a_0&=&(8+a^2-4b)^2.
\end{eqnarray*}

\section*{Acknowledgements}

The authors are indebted to the referee for the detailed comments and suggestions which helped to improve the manuscript considerably, especially on the classification of 8T11 and 8T22 in Theorem \ref{deoctics}.

\begin{bibdiv}
  \begin{biblist}

\bib{awtreyoctic}{article}{
	title = {Galois groups of doubly even octic polynomials},
	volume = {19},
	number = {1},
	journal = {J. Algebra Appl.},
	author = {Altmann, Anna},
	author = {Awtrey, Chad},
	author = {Cryan, Sam},
	author = {Shannon, Kiley},
	author = {Touchette, Madeleine},
	year = {2020},
	pages = {2050014}
}

\bib{awtreysubfield}{article}{
	title = {Subfields of solvable sextic field extensions},
	volume = {4},
	journal = {North Carolina J. Math. Stat.},
	author = {Awtrey, Chad},
	author = {Jakes, Peter},
	year = {2018},
	pages = {1--11}
}

\bib{awtreysextic}{article}{
	title = {Galois groups of even sextic polynomials},
	volume = {63},
	number = {3},
	journal = {Canad. Math. Bull.},
	author = {Awtrey, Chad},
	author = {Jakes, Peter},
	year = {2020},
	pages = {670--676}
}

\bib{magma}{article}{
    AUTHOR = {Bosma, Wieb},
   AUTHOR = {Cannon, John},
   AUTHOR = {Playoust, Catherine},
     TITLE = {The {M}agma algebra system. {I}. {T}he user language},
   JOURNAL = {J. Symbolic Comput.}, 
    VOLUME = {24},
      YEAR = {1997},
    NUMBER = {3--4},
     PAGES = {235--265},
}

\bib{brookfield}{article}{
	title = {Factoring quartic polynomials: A lost art},
	volume = {80},
	number = {1},
	journal = {Math. Mag.},
	author = {Brookfield, Gary},
	year = {2007},
	pages = {67--70}
}

\bib{butlermckay}{article}{
	title = {The transitive groups of degree up to eleven},
	volume = {11},
	number = {8},
	journal = {Comm. Algebra},
	author = {Butler, Gregory},
	author = {McKay, John},
	year = {1983},
	pages = {863--911}
}

\bib{dean}{article}{
	title = {A rational polynomial whose group is the quaternions},
	volume = {88},
	number = {1},
	journal = {Amer. Math. Monthly},
	author = {Dean, Richard A.},
	year = {1981},
	pages = {42--45}
}

\bib{polycomp1}{article}{
	title = {${A}_4$-sextic fields with a power basis},
	volume = {19},
	number = {3},
	journal = {Missouri J. Math. Sci.},
	author = {Eloff, Daniel},
	author = {Spearman, Blair K.},
	author = {Williams, Kenneth S.},
	year = {2007},
	pages = {188--194}
}

\bib{harringtonjones}{article}{
	title = {The irreducibility of power compositional sextic polynomials and their {Galois} groups},
	volume = {120},
	number = {2},
	journal = {Math. Scand.},
	author = {Harrington, Joshua},
	author = {Jones, Lenny},
	year = {2017},
	pages = {181--194}
}

\bib{polycomp2}{article}{
	title = {Infinite families of ${A}_4$-sextic polynomials},
	volume = {57},
	number = {3},
	journal = {Canad. Math. Bull.},
	author = {Ide, Joshua},
	author = {Jones, Lenny},
	year = {2014},
	pages = {538--545}
}

\bib{polycomp3}{article}{
	title = {An infinite family of ninth degree dihedral polynomials},
	volume = {97},
	number = {1},
	journal = {Bull. Aust. Math. Soc.},
	author = {Jones, Lenny},
	author = {Phillips, Tristan},
	year = {2018},
	pages = {47--53}
}

\bib{kaplansky}{book}{
	edition = {2nd ed.},
	title = {Fields and {Rings}},
	publisher = {The University of Chicago Press},
	author = {Kaplansky, Irving},
	year = {1972}
}

\bib{kappewarren}{article}{
	title = {An elementary test for the {Galois} group of a quartic polynomial},
	volume = {96},
	number = {2},
	journal = {Amer. Math. Monthly},
	author = {Kappe, Luise-Charlotte},
	author = {Warren, Bette},
	year = {1989},
	pages = {133--137}
}

\bib{polycomp4}{article}{
	title = {Lifting monogenic cubic fields to monogenic sextic fields},
	volume = {34},
	number = {3},
	journal = {Kodai Math. J.},
	author = {Lavallee, Melisa J.},
	author = {Spearman, Blair K.},
	author = {Williams, Kenneth S.},
	year = {2011},
	pages = {410--425}
}

\bib{schinzel}{book}{
	title = {Polynomials with special regard to reducibility},
	publisher = {Cambridge University Press},
	author = {Schinzel, Andrzej},
	year = {2000}
}

\bib{soicher}{thesis}{
	title = {The computation of {Galois} groups},
	school = {Concordia University},
	author = {Soicher, Leonard},
	year = {1981},
	type = {Master's thesis}
}

\bib{polycomp5}{article}{
	title = {The simplest ${D}_4$-octics},
	volume = {2},
	number = {2},
	journal = {Int. J. Algebra},
	author = {Spearman, Blair K.},
	author = {Williams, Kenneth S.},
	year = {2008},
	pages = {79--89}
}

\bib{stauduhar}{article}{
	title = {The determination of {Galois} groups},
	volume = {27},
	journal = {Math. Comp.},
	author = {Stauduhar, Richard P.},
	year = {1973},
	pages = {981--996}
}

\bib{gap}{webpage}{
    author       = {{The GAP~Group}},
    title        = {{GAP} -- {G}roups, {A}lgorithms, and {P}rogramming,
                    {V}ersion 4.11.1},
    year         = {2021},
    url = {https://www.gap-system.org}
}

\bib{lmfdb}{webpage}{
  author       = {{The LMFDB Collaboration}},
  title        = {The ${L}$-functions and modular forms database},
  url = {http://www.lmfdb.org},
  year         = {2021},
  note         = {[Online; accessed 12 November 2021]}
}

\bib{wmath}{webpage}{
  author = {{Wolfram Research, Inc.}},
  title = {Mathematica, {V}ersion 12.3.1},
  url = {https://www.wolfram.com/mathematica},
  year = {2021}
}

  \end{biblist}
\end{bibdiv}

\end{document}